\tikzset{
	commutative diagrams/.cd,
	arrow style=tikz,
	diagrams={>={Stealth}},
}
\tikzset{external/disable dependency files}
\tikzset{external/aux in dpth={false}}
\newacro{B-spline}{Basis spline}
\newacro{CAD}{Computer Aided Design}
\newacro{DoF}{Degree of Freedom}
\newacro{FEM}{Finite Element Method}
\newacro{HE}{Hard Edge}
\newacro{IGA}{IsoGeometric Analysis}
\newacro{NURBS}{Non-Uniform Rational Basis Spline}
\newacro{ODE}{Ordinary Differential Equation}
\newacro{PEC}{Perfect Electric Conductor}
\newacro{SI}{Syst\'eme international (d'unit\'e)}
\begin{document}
\title{IsoGeometric Approximations for Electromagnetic Problems in Axisymmetric Domains}

\author{Abele Simona $^{(1),(2)}$, Luca Bonaventura$^ {(1)}$\\
		Carlo de Falco  $^{(1)}$, Sebastian Sch\"ops $^{(2)}$}

\maketitle

\begin{center}
	{\small
		
		(1) MOX -- Modelling and Scientific Computing, \\
		Dipartimento di Matematica, Politecnico di Milano \\
		Via Bonardi 9, 20133 Milano, Italy\\
		{\tt abele.simona@polimi.it, luca.bonaventura@polimi.it, carlo.defalco@gmail.com }\\
		{$ \ \ $ }\\
		(2)  Technische Universit\"at Darmstadt \\
		Centre for Computational Engineering \\         
		Dolivostra{\ss}e 15,   64293 Darmstadt, Germany  \\           
		{\tt  abele@gsc.tu-darmstadt.de, schoeps@gsc.tu-darmstadt.de }\\
	}
\end{center}

\date{}

\vspace*{1.5cm}
\noindent
{\bf Keywords}:  Electromagnetic Fields, Maxwell Equations, IsoGeometric
Analysis, Axisymmetric Domains, De Rham Complexes

\vspace*{0.5cm}

\noindent
{\bf AMS Subject Classification}:    65L05, 65P10, 65Z05, 70G65, 78A35

\pagebreak

\abstract{
\acresetall  % Reset all the acronyms
%% Text of abstract
We propose a  numerical method for the solution of electromagnetic problems
on axisymmetric domains, based on a
combination of  a spectral Fourier
approximation in the azimuthal direction with an 
\ac{IGA} approach in the radial and axial
directions.
This  combination allows to blend the flexibility and accuracy of \ac{IGA} approaches with the advantages of a Fourier
representation on axisymmetric domains. It also allows to reduce significantly the computational cost 
by decoupling of the computations required for each Fourier mode.  We prove that the discrete approximation
spaces employed  functional space constitute a closed and exact de Rham sequence. Numerical simulations of relevant
benchmarks confirm the high order convergence  and other computational advantages of the  proposed method.
}

\pagebreak
%% \linenumbers

%% main text
\section{Introduction}
\label{intro}
\acresetall  % Reset all the acronyms

The solution of the Maxwell equations on axisymmetric domains, as the one depicted in Figure~\ref{fig:axis_scheme}, plays an
important role in many applications, such as, for example, electrical machines, 
cables, the beam pipe of a particle accelerator magnet or resonating cavities.
\begin{figure}[!htb]
	\centering
	\includegraphics[width=0.75\textwidth]{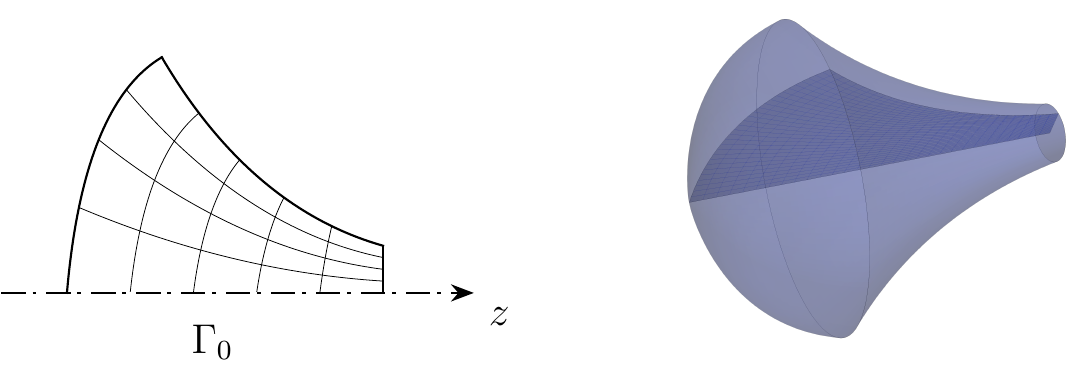}
	\caption{Representation of a cross-section $\srfdomflat$ lying on the $\parGs{\rho=0}$ axis (right) and its associated axisymmetric domain $\voldomaxis$ (right). $\Gamma_0$ represents the portion of the boundary of $\srfdomflat$ on the axis.}
	\label{fig:axis_scheme}
	%	\color{red}
	%	\rule{1cm}{1cm}
	%	\caption{\color{red}image showing an example an coordinate systems?}
\end{figure}%
Several tailored numerical techniques to solve electromagnetic problems on such domains have been
proposed in the literature, \textit{e.g.}, among many others,
\cite{assous:2002,bernardi:1999,borm:2002,copeland:2006,copeland:2010,gopalakrishnan:2012,li:2011,mercier:1982,oh:2015}. In
particular, when considering the computation of multipole expansions 
in particle accelerator magnets, often one has to consider boundary
conditions that are accurately represented by a combination of a small
number of Fourier modes. Therefore, if a Fourier spectral approach is
employed in the azimuthal direction, the resulting computational cost can be
significantly reduced.  

Concerning specifically particle accelerator simulation, it has been recently shown
in \cite{abele:2019} that the accuracy of particle tracking methods
employed to design new accelerator configurations is essentially
limited by the accuracy of the field reconstruction. In particular,
numerical approximations of electric and magnetic fields that do not respect the inherent
mathematical structure of the Maxwell equations may disrupt the effective accuracy of the
simulation. This motivates the search for consistent numerical methods that
provide high order accuracy in the spatial approximation.

In this work, we propose a  combination of  a spectral Fourier
approximation in the azimuthal direction with an 
\ac{IGA} approach in the radial and axial
directions, extending the strategy presented in~\cite{oh:2015}. The combination of the Fourier basis and \ac{IGA} is in the spirit of the recent developments 
in \cite{perotto:2017} in computational fluid dynamics.
\ac{IGA} has become popular in applied
electromagnetism in the last decade 
\cite{bontinck:2017}, \cite{buffa:2010}. One of its features is that it  provides
discrete approximations that, 
thanks to an appropriate choice of the finite-dimensional
approximation spaces, indeed satisfy discrete analogous 
of the continuous Maxwell equations. In more technical terms,
in the spirit of de Rham \cite{monk:2003},
sequences of functional spaces which are closed and exact %
%, 
%\textit{i.e.} such that
%\begin{equation*}
%\label{eq:3d_cart_seq}
%\R \stackrel{\subset}{\rightarrow} \Hsp^1 (V)
%\stackrel{\gradt}{\rightarrow}\Hvsp (\curlt; \,V)
%\stackrel{\curlt}{\rightarrow}\Hvsp (\divergt; \,V)
%\stackrel{\divergt}{\rightarrow} \Lsp^2 (V) {\rightarrow} 0,
%\end{equation*}
%where $V\subset \mathbb{R}^3 $ denotes a contractible domain,
can be approximated by closed and exact
sequences at a discrete level. 

The combination of splines and Fourier bases in cylindrical coordinates effectively mitigates two problems. On the one hand, electrotechnical devices, particularly those based on high-frequency electromagnetic fields, are very sensitive to shape variations and are usually described by solutions of high regularity \cite{corno:2016}, which favours splines-based field and geometry representations. On the other hand, \ac{CAD} tools rarely provide volumetric representations.  Therefore, considering the cylindrical coordinate system also helps in bridging the gap between \ac{CAD} and simulation.  
Furthermore, the orthogonality of the Fourier representation allows to decouple the computation associated to different Fourier modes.
Notice that, recently, the scaled boundary approach was proposed and demonstrated for Laplace-type problems in \cite{arioli:2019}. While not limited to axisymmetric domains, this approach does  not exploit the orthogonality of the Fourier basis and does not display high convergence rates of the spectral method proposed here.

The paper is structured as follows. In Section~\ref{sec:preliminaries}, we briefly recall the Maxwell equations and we introduce the notation for the treatment of problems in axisymmetric domains. Section~\ref{sec:discretization} is devoted to the presentation of the strategy used to extend a general discretization in Cartesian coordinate to the cylindrical setting. Moreover, appropriate error estimates are introduced. In Section~\ref{sec:iga}, we briefly introduce the discrete spaces used for \ac{IGA}, together with error estimates, that will be used to build the discretization presented in Section~\ref{sec:discretization}. In Section~\ref{sec:tests}, we apply the method presented in this work to relevant problems often arising in electromagnetism. Finally, in Section~\ref{sec:conclu}, we draw conclusions and discuss possible future developments of the proposed approach.

\section{Preliminaries}
\label{sec:preliminaries}
Consider the time-harmonic Maxwell equations~\cite{jackson:2007, monk:2003}
\begin{equation}
\label{eq:max_eq}
\begin{aligned}
\curlt \Ev & = i\omega\,{\Bv}~,\\
\divergt \Dv & =\chargeden~, \\
\curlt \Hv & = \jv -i\omega\,{\Dv}~,\\
\divergt \Bv & = 0~, %\label{eq:iwmax_diff_noBsource}
\end{aligned}
\end{equation}
together with the magnetic vector potential $\magvp$, for which 
\begin{equation*}
\Bv = \curlt \magvp
\end{equation*}
holds, and a set of linear constitutive relations that links the fluxes $\Dv$ and $\Bv$ to the field strengths $\Ev$ and $\Hv$:
\begin{equation*}
\Dv = \pmitt \Ev~, \qquad \Bv = \pmeab \Hv~,
\end{equation*}
where $\pmitt$ and $\pmeab$ are the permittivity and permeability, respectively.
A first problem, which often arises in electromagnetics and that we will consider in Sections~\ref{sec:pillbox} and~\ref{sec:tesla_cavity}, amounts to determining the eigenmodes of a resonant cavity solving the following source problem:
\begin{equation}
\label{eq:curlcurl_E}
\curlt \parT{\pmeab^{-1}\curlt\Ev} = \omega^2\pmitt\Ev~,
\end{equation}
with suitable boundary conditions.
The second type of problem we will consider arises in the stationary limit $\omega \rightarrow 0$, and amounts to determine the magnetic vector potential $\magvp$ regularized by the Coulomb gauge, solving the following equations:
\begin{equation}
\label{eq:A_form_st}
\begin{aligned}
&\curlt \parT{\pmeab^{-1} \curlt \magvp} = \jv~, \\
&\divergt\parT{\pmitt\magvp} = 0~,
\end{aligned}
\end{equation}
for a given source current density $\jv$ and with suitable boundary conditions.

The problems above will be solved on axisymmetric  bounded Lipschitz domains $\voldomaxis \subset
\R^3$ represented in cylindrical coordinates. We will exploit the
Fourier basis to define different de Rham complexes for each mode $m$,
which will then be analysed and applied to some numerical examples in
the following sections. In this work we will restrict our attention to the case $m\neq0$, which corresponds to functions that are not axisymmetric.
Let us introduce the cylindrical coordinates $(\rho, \, z, \,
\theta)$, with the $z$-axis being the symmetry axis of the domain. The
uncommon choice of placing the angular variable $\theta$ as the last
one will considerably simplify the notation of the following considerations. 
The Cartesian coordinates are related to the cylindrical ones by
\begin{equation*}
\begin{bmatrix} x \\ y \\ z\end{bmatrix} =
\vec{g} \parT{\begin{bmatrix} \rho \\ z \\ \theta \end{bmatrix}}
= \begin{bmatrix} \rho \,\cos{\theta} \\ \rho \, \sin{\theta} \\
z\end{bmatrix}
\end{equation*}
and, conversely
\begin{equation*}
\rho = \sqrt{x^2 + y^2} \quad \text{and} \quad \theta =
\left\{
\begin{aligned}
&-\arccos \parT{\tfrac{x}{\rho}}~, && \text{if $y < 0$}~, \\
&\arccos \parT{\tfrac{x}{\rho}}~, && \text{if $y \geq 0$}~.
\end{aligned}
\right.
\end{equation*}
We describe the domain $\voldomaxis$ using its cross-section with
respect to the and $\rho z$-plane $\srfdomflat \subset \R^+ \times
\R$. 
We assume that $\srfdomflat \subset \R^2$ is a bounded Lipschitz domain obtained through a diffeomorphism $\Fv$ of the unit square and that $\partial \srfdomflat \cap \parG{\rho = 0}$ is either empty or coincides with $\Fv(\parGs{0} \times \parQ{0, \, 1})$.
Let $\Gamma_0 = \interior\parT{\partial \srfdomflat \cap \parG{\rho =
		0}}$ be the interior of the intersection of the boundary of $\srfdomflat$
with the $z$-axis and $\Gamma = \partial \srfdomflat \setminus \Gamma_0$. 
The volume $\voldomaxis$ is obtained by rotating the cross-section
over the symmetry axis $z$ and adding $\Gamma_0$: 
\begin{equation*}
\voldomaxis = \parG{ \xv \in \R^3: \xv = \vec{g}\parT{\parT{\rho, \,
			z, \, \theta}^T}, ~ \parT{\rho, \, z}^T \in \srfdomflat, ~\theta \in [0,
	\, 2\pi)} \cup \Gamma_0~. 
\end{equation*}
We have that $\partial \voldomaxis = \Gamma \times [0, \, 2\pi)$ (see Figure~\ref{fig:axis_scheme}).

\medskip
The differential operators involved in the standard de Rham complex in
Cartesian coordinates~\cite{buffa:2011}
\begin{equation}
\label{eq:3d_cart_seq}
\begin{split}
\tikzsetnextfilename{de_rham/de_rham_3d_cart}
\begin{tikzpicture}[commutative diagrams/every diagram,
arr/.style={commutative diagrams/.cd,
	every arrow,
	every label}]
\def\minW{25mm}
\def\minH{20mm}
\def\hShift{10mm}
\def\vShiftA{20mm}
\def\vShiftB{4mm}
\def\auxA{1mm}
\def\auxB{4mm}
%
% Nodes
\node (X0) [] at (0, 0) {$\Hsp^1\parTs{\voldomaxis}$};
\node (X1) [xshift=1.2*\minW] at (X0) {$\Hsp\parTs{\curlt;\,\voldomaxis}$};
\node (X2) [xshift=1.3*\minW] at (X1) {$\Hsp\parTs{\divergt;\,\voldomaxis}$};
\node (X3) [xshift=1*\minW] at (X2) {$\Lsp^2\parTs{\voldomaxis}$};
\node (R) [xshift=-0.6*\minW] at (X0) {$\R$};
\node (Z) [xshift=0.6*\minW] at (X3) {$0$};
% Arrows
\path[draw, arr] (X0) -- (X1) node [midway] {$\gradt$};
\path[draw, arr] (X1) -- (X2) node [midway] {$\curlt$};
\path[draw, arr] (X2) -- (X3) node [midway] {$\divergt$};
\path[draw, arr] (R) -- (X0);
\path[draw, arr] (X3) -- (Z);
\end{tikzpicture}
\end{split}
\end{equation}
correspond to the
following ones in cylindrical coordinates: 
\begin{equation*}
\label{eq:cyl_grad}
\renewcommand\arraystretch{1.5}
\gradc u = \begin{bmatrix}
\pd{\rho}{u} \\
\pd{z}{u} \\
\dfrac{1}{\rho}\pd{\theta}{u}
\end{bmatrix}~,
\end{equation*}
\begin{equation*}
\label{eq:cyl_curl}
\renewcommand\arraystretch{1.5}
\curlc \uv = \begin{bmatrix}
\dfrac{1}{\rho}\pd{\theta}{u_z} - \pd{z}{u_\theta}\\
\dfrac{1}{\rho}\parT{\pd{\rho}\parT{\rho \,u_\theta} -
	\pd{\theta}{u_\rho}}\\ 
\pd{z}{u_\rho} - \pd{\rho}{u_z}
\end{bmatrix}~,
\end{equation*}
\begin{equation*}
\label{eq:cyl_div}
\divergc \uv = \dfrac{1}{\rho}\, \pd{\rho}\parT{\rho \,
	u_\rho} + \dfrac{1}{\rho}\,\pd{\theta}{u_\theta} +
\pd{z}{u_z}~. 
\end{equation*}
Since the domain is axisymmetric, we can exploit the Fourier
orthogonal system in the angular variable $\theta$. Moreover, it is convenient to express each
function on $\voldomaxis$ as the sum of a symmetric and an
antisymmetric part with respect to the plane at $\theta=0$, so that, 
for a scalar function, we have that $u = u^s + u^a$, where
\begin{align*}
u^s & = u^\parTs{0} + \sum_{m=1}^{\infty} u^\m \,\cos{m\theta}~, \\
u^a & = \sum_{m=1}^{\infty}u^\mm \,\sin{m\theta}~.
\end{align*}
For a vector function we have instead that $\uv = \uv^s +
\uv^a$, where
\begin{align*}
\uv^s &= \begin{bmatrix}
u_\rho^\parTs{0} \\
u_z^\parTs{0}\\
0
\end{bmatrix}+\sum_{m=1}^{\infty}\begin{bmatrix}
u_\rho^\m \,\cos{m\theta}\\
u_z^\m\,\cos{m\theta}\\
u_\theta^\m\,\sin{m\theta}
\end{bmatrix}~, \\
\uv^a &= \begin{bmatrix}
0 \\
0\\
u_\theta^\parTs{0}
\end{bmatrix}+\sum_{m=1}^{\infty}\begin{bmatrix}
u_\rho^\mm \,\sin{m\theta}\\
u_z^\mm\,\sin{m\theta}\\
u_\theta^\mm\,\cos{m\theta}
\end{bmatrix}~.
\end{align*}
The Fourier coefficients $u^\m$, $\uv^\m$ are defined on the cross-section $\srfdomflat$. With this representation, the effect of the
differential operators on each mode $m$ can be considered
independently, leading to the definition of the following operators
acting on the Fourier coefficients: 
\begin{equation}
\label{eq:cyl_gradm}
\renewcommand\arraystretch{1.5}
\gradm u^\m = \begin{bmatrix}
\pd{\rho}{u^\m} \\
\pd{z}{u^\m}\\
-\dfrac{m}{\rho}u^\m
\end{bmatrix}~,
\end{equation}
\begin{equation}
\label{eq:cyl_curlm}
\renewcommand\arraystretch{1.5}
\curlm \uv^\m = \begin{bmatrix}
-\dfrac{m}{\rho}u^\m_z - \pd{z}{u^\m_\theta}\\
\dfrac{1}{\rho}\parT{\pd{\rho}\parT{\rho\,u^\m_\theta} +
	{m}\,{u^\m_\rho}}\\ 
\pd{z}{u^\m_\rho} - \pd{\rho}{u^\m_z}
\end{bmatrix}~,
\end{equation}
\begin{equation}
\label{eq:cyl_divm}
\divergm \uv^\m = \dfrac{1}{\rho}\,\pd{\rho}\parT{\rho\, u^\m_\rho} -
\dfrac{m}{\rho}{u^\m_\theta} + \pd{z}{u_z}~. 
\end{equation}
In cylindrical coordinates, the integral  of a function $f$ over a
volume has the form 
\begin{equation*}
\int_{\voldomaxis} f\, \rho \, \dd\rho \dd z \dd \theta~.
\end{equation*}
This leads naturally to the use of weighted Hilbert spaces, which are
defined as follows: 
\begin{equation*}
\begin{alignedat}{2}
&\Lsp_\rho^2(\srfdomflat) &&= \parG{u: \int_\srfdomflat
	u^2 \,\rho\,\dd\rho \dd z < \infty}~, \\ 
&\Lsp_\rho^2(\srfdomflat;\, \R^3) &&= \parG{\uv:
	\int_\srfdomflat\norm{\uv}_e^2\, \rho\,\dd\rho \dd z <
	\infty}~, 
\end{alignedat}
\end{equation*}
where $\norm{\uv}_e = \sqrt{\uv \cdot \uv}$ is the Euclidean norm and
$\uv$ is a vectorial function defined on $\srfdomflat$ which takes
values in $\R^3$. The associated norms are the standard ones induced
by the inner products
\begin{equation*}
\idot{u}{v}_{\Lsp_\rho^2(\srfdomflat)} = \int_\srfdomflat uv
\,\rho\,\dd\rho \dd z~, \quad
\idot{\uv}{\vv}_{\Lsp_\rho^2(\srfdomflat;\, \R^3)} =
\int_\srfdomflat \uv \cdot \vv \,\rho\,\dd\rho \dd z~, 
\end{equation*}
and will be often simply denoted using the subscript $\rho$, \textit{i.e.}
$\norm{u}_\rho = \sqrt{\idot{u}{u}_\rho}$. 
We also introduce the following spaces:
\begin{equation}
\label{eq:Zm_notation}
\begin{alignedat}{3}
&\CySpT^{m, \, 0} && = \Hsp_\rho(\gradm;\,\srfdomflat)&& =
\parG{u \in \Lsp_\rho^2(\srfdomflat): \gradm u \in
	\Lsp_\rho^2(\srfdomflat; \,\R^3)},\\ 
&\CySpT^{m, \, 1} && = \Hsp_\rho(\curlm;\,\srfdomflat) &&=
\parG{\uv \in \Lsp_\rho^2(\srfdomflat; \,\R^3): \curlm \uv \in
	\Lsp_\rho^2(\srfdomflat; \,\R^3)},\\ 
&\CySpT^{m, \, 2} && = \Hsp_\rho(\divergm;\,\srfdomflat)&& =
\parG{\uv \in \Lsp_\rho^2(\srfdomflat; \,\R^3): \divergm \uv \in
	\Lsp_\rho^2(S)}~,\\ 
&\CySpT^{m, \, 3} && = \Lsp_\rho^2(\srfdomflat)~,&&
\end{alignedat}
\end{equation}
where the differential operators have to be interpreted in a weak
sense and the corresponding norms are 
\begin{alignat*}{2}
&\norm{u}_{\Hsp_\rho(\gradm;\,\srfdomflat)}^2 &&= \norm{u}_\rho^2 +
\norm{\gradm u}_\rho^2~,\\ 
&\norm{\uv}_{\Hsp_\rho(\curlm;\,\srfdomflat)}^2 &&= \norm{\uv}_\rho^2
+ \norm{\curlm u}_\rho^2~,\\ 
&\norm{\uv}_{\Hsp_\rho(\divergm;\,\srfdomflat)}^2 &&=
\norm{\uv}_\rho^2 + \norm{\divergm u}_\rho^2~. 
\end{alignat*}
The spaces~\eqref{eq:Zm_notation} arise naturally in the resolution of
electromagnetic problems in cylindrical coordinates by means of
Galerkin formulations.
For example, for $m\neq0$, the formulation associated to~\eqref{eq:curlcurl_E} with \ac{PEC} boundary conditions reads
\begin{problem}
	\label{pb:weak_curl_curl}
	Find the eigenpair $\parTs{\omega^2, \, \Ev^{(m)}} \in \R_+ \times \CySpT_0^{m, \, 1}$,
	with $\Ev \neq \vec{0}$ and where the subscript $0$ in the space denotes the vanishing tangential component on $\Gamma$, such that
	\begin{equation}
	\label{eq:axis_weak_eig_pb2}
	\begin{split}
	&\int_\srfdomflat \parTs{\pmeab^{-1}\curltm \Ev^{(m)}}\cdot\curltm \vv\,\rho\,\dd\rho \dd z \\
	&\hspace{30mm}= \omega^2 \int_\srfdomflat \pmitt \Ev^{(m)} \cdot \vv\,\rho\,\dd\rho \dd z
	\end{split}~,\quad \vv \in \CySpT_0^{m, \, 1}~.
	\end{equation}
\end{problem}
The Maxwell equations~\eqref{eq:max_eq}, 
%\pdfcomment{should we write down Maxwell and some Galerkin formulation before?}
through the differential operators~\eqref{eq:cyl_gradm}--\eqref{eq:cyl_divm},
relate quantities belonging to different spaces. A representation of
the resulting structure is given by the sequence
\begin{equation*}
\label{eq:3d_cyl_seq}
\begin{split}
\tikzsetnextfilename{de_rham/de_rham_3d_cyl}
\begin{tikzpicture}[commutative diagrams/every diagram,
arr/.style={commutative diagrams/.cd,
	every arrow,
	every label}]
\def\minW{25mm}
\def\minH{20mm}
\def\hShift{10mm}
\def\vShiftA{20mm}
\def\vShiftB{4mm}
\def\auxA{1mm}
\def\auxB{4mm}
%
% Nodes
\node (X0) [] at (0, 0) {$\CySpT^{m, \, 0}$};
\node (X1) [xshift=1.1*\minW] at (X0) {$\CySpT^{m, \, 1}$};
\node (X2) [xshift=1.1*\minW] at (X1) {$\CySpT^{m, \, 2}$};
\node (X3) [xshift=1.1*\minW] at (X2) {$\CySpT^{m, \, 3}$};
\node (R) [xshift=-0.6*\minW] at (X0) {$0$};
\node (Z) [xshift=0.6*\minW] at (X3) {$0$};
% Arrows
\path[draw, arr] (X0) -- (X1) node [midway] {$\gradtm$};
\path[draw, arr] (X1) -- (X2) node [midway] {$\curltm$};
\path[draw, arr] (X2) -- (X3) node [midway] {$\divergtm$};
\path[draw, arr] (R) -- (X0);
\path[draw, arr] (X3) -- (Z);
\end{tikzpicture}
\end{split}
\end{equation*}
%\begin{equation*}
%%\label{eq:3d_cyl_seq}
%\begin{split}
%%\includegraphics[width=0.91\textwidth]{img/de_rham/cyl_m_expl}
%\includegraphics[width=0.87\linewidth]{img/de_rham/cyl_m}
%\end{split}
%\end{equation*}
In \cite[Theorem 2.1]{oh:2015}, it has been proven that the 
sequence is exact for $m\neq0$, meaning that the range of each operator linking two
spaces is equal to the kernel of the subsequent one. 
The use of discretizations that respect the above structure is necessary to yield
stable numerical methods. The aim of the following sections is to
present a technique to build such discretizations, which are well-suited for the resolution of problems in electromagnetism.

\section{Discretization}
\label{sec:discretization}
To  solve numerical problems in electromagnetism, we need to
approximate the continuous spaces $\Zsp^{m, \, k}$, $k=0,\, \ldots, \,
3$ by means of finite-dimensional subspaces $\Zsp_h^{m, \, k}$ which
depend on a discretization parameter related to the characteristic
element size $h$. In the following, we will use the subscript $h$ to
refer to quantities related to the finite-dimensional spaces and we
will consider conforming discretizations, meaning that $\Zsp_h^{m, \,
	k} \subset \Zsp^{m, \, k}$ for every value of $h$. 
In this section, we mimic the strategy used in \cite{oh:2015} to build
an exact sequence of conforming discrete spaces and projectors such
that, for sufficiently regular functions, the following diagram commutes
\begin{equation}
\label{eq:cyl_sequence}
\begin{split}
\tikzsetnextfilename{de_rham/de_rham_3d_cyl_cont_discr}
\begin{tikzpicture}[commutative diagrams/every diagram,
arr/.style={commutative diagrams/.cd,
	every arrow,
	every label}]
\def\minW{25mm}
\def\minH{15mm}
\def\hShift{10mm}
\def\vShiftA{20mm}
\def\vShiftB{4mm}
\def\auxA{1mm}
\def\auxB{4mm}
%
% Nodes
\node (X0) [] at (0, 0) {$\CySpT^{m, \, 0}$};
\node (X1) [xshift=1.1*\minW] at (X0) {$\CySpT^{m, \, 1}$};
\node (X2) [xshift=1.1*\minW] at (X1) {$\CySpT^{m, \, 2}$};
\node (X3) [xshift=1.1*\minW] at (X2) {$\CySpT^{m, \, 3}$};
\node (X0h) [yshift=-\minH] at (X0) {$\CySpT_h^{m, \, 0}$};
\node (X1h) [yshift=-\minH] at (X1) {$\CySpT_h^{m, \, 1}$};
\node (X2h) [yshift=-\minH] at (X2) {$\CySpT_h^{m, \, 2}$};
\node (X3h) [yshift=-\minH] at (X3) {$\CySpT_h^{m, \, 3}$};
\node (R) [xshift=-0.6*\minW] at (X0) {$0$};
\node (Z) [xshift=0.6*\minW] at (X3) {$0$};
\node (Rh) [yshift=-\minH] at (R) {$0$};
\node (Zh) [yshift=-\minH] at (Z) {$0$};
% Arrows
\path[draw, arr] (X0) -- (X1) node [midway] {$\gradtm$};
\path[draw, arr] (X1) -- (X2) node [midway] {$\curltm$};
\path[draw, arr] (X2) -- (X3) node [midway] {$\divergtm$};
\path[draw, arr] (R) -- (X0);
\path[draw, arr] (X3) -- (Z);
\path[draw, arr] (X0h) -- (X1h) node [midway] {$\gradtm$};
\path[draw, arr] (X1h) -- (X2h) node [midway] {$\curltm$};
\path[draw, arr] (X2h) -- (X3h) node [midway] {$\divergtm$};
\path[draw, arr] (Rh) -- (X0h);
\path[draw, arr] (X3h) -- (Zh);
\path[draw, arr] (X0) -- (X0h) node [midway, anchor=west, yshift=0.8mm] {$\Pibr^0$};
\path[draw, arr] (X1) -- (X1h) node [midway, anchor=west, yshift=0.8mm] {$\Pibr^1$};
\path[draw, arr] (X2) -- (X2h) node [midway, anchor=west, yshift=0.8mm] {$\Pibr^2$};
\path[draw, arr] (X3) -- (X3h) node [midway, anchor=west, yshift=0.8mm] {$\Pibr^3$};
\end{tikzpicture}
\end{split}
\end{equation}
%\begin{equation}
%\begin{split}
%\includegraphics[width=0.91\textwidth]{img/de_rham/cyl_m_diag_discr} 
%\end{split}
%\end{equation}
The discrete spaces in~\eqref{eq:cyl_sequence} are built using
sequences of two-dimensional Cartesian spaces defined on
$\srfdomflat$. We therefore start introducing the standard two-dimensional Cartesian spaces with the corresponding discrete
counterparts. We first introduce the basic definitions
and the necessary assumptions on the discrete spaces, postponing the
explicit definition of these to the following section. 
Let the standard spaces of square-integrable functions in Cartesian
coordinates be defined as 
\begin{equation*}
\begin{alignedat}{2}
&\Lsp^2(\srfdomflat) &&= \parG{u: \int_\srfdomflat
	u^2\,\dd\rho \dd z < \infty}~, \\ 
&\Lsp^2(\srfdomflat;\, \R^2) &&= \parG{\uv:
	\int_\srfdomflat\norm{\uv}_e^2\, \dd\rho \dd z < \infty}~, 
\end{alignedat}
\end{equation*}
along with the associated Hilbert spaces $\Hsp^s$, which is the space
of functions in $\Lsp^2$ such that the weak derivatives up to order
$s$ belong to $\Lsp^2$. We also define:
\begin{equation}
\label{eq:cart_sps}
\begin{alignedat}{3}
&\CaSpB^{0} && = \Hsp^1(\srfdomflat)&& = \parG{u \in
	\Lsp^2(\srfdomflat): \gradt u \in \Lsp^2(S)},\\ 
&\CaSpB^{1} && = \Hsp(\curltsc;\,\srfdomflat) &&= \parG{\uv \in
	\Lsp^2(\srfdomflat;\,\R^2): \curltsc \uv \in \Lsp^2(\srfdomflat)},\\ 
&\CaSpB^{1*} && = \Hsp(\divergt;\,\srfdomflat)&& = \parG{\uv \in
	\Lsp^2(\srfdomflat;\,\R^2): \divergt \uv \in \Lsp^2(\srfdomflat)},
\\ 
&\CaSpB^{2} && = \Lsp^2(\srfdomflat)&& = \parG{u:
	\int_{\srfdomflat} u^2 < \infty}. 
\end{alignedat}
\end{equation}
Here, $\gradt$ and $\divergt$ are the standard gradient and divergence
in Cartesian coordinates
\begin{equation*}
\gradt u = \begin{bmatrix}\pd{\rho}u\\ \pd{z}u\end{bmatrix}~, \qquad
\divergt\parT{\begin{bmatrix}u_\rho\\u_z\end{bmatrix}} =
\pd{\rho}{u_\rho} + \pd{z}{u_z} 
\end{equation*}
and  the scalar curl is defined as
\begin{equation*}
\curltsc\parT{\begin{bmatrix}u_\rho\\u_z\end{bmatrix}} =
\pd{\rho}{u_z} - \pd{z}{u_\rho}~. 
\end{equation*}
For functions in $\CaSpB^{0}$, also the following operator is well-defined: 
\begin{equation*}
\renewcommand\arraystretch{1.2}
\rott(u) = \begin{bmatrix}\phantom{-}\pd{z}{u} \\
-\pd{\rho}{u}\end{bmatrix} = \mat{P} \parT{\gradt(u)}~, 
\end{equation*}
where
\begin{equation*}
\mat{P} = \begin{bmatrix} 0 & 1 \\ -1 & 0\end{bmatrix}~.
\end{equation*}
We will use the word rotor, or perpendicular gradient, to indicate
$\rott$ and to avoid the confusion with the three-dimensional
$\curlt$. For error estimates we will also need the the spaces
$\Hsp^s(\curlt; \, S)$, which is the space of functions in
$\Hsp^s(S;\, \R^3)$ such that their curl belongs to $\Hsp^s(S;\,\R^3)$
and $\Hsp^s(\divergt; \, S)$, which is the space of functions in
$\Hsp^s(S;\, \R^3)$ such that their divergence belongs to
$\Hsp^s(S)$. 

We assume that $\CaSpB_h^0$, $\CaSpB_h^1$, $\CaSpB_h^{1*}$ and
$\CaSpB_h^2$ are conforming discretizations of~\eqref{eq:cart_sps} and
that, with a sequence of $\Lsp^2$-stable projectors, make the
following diagrams commutative:
\begin{equation}
\label{eq:2D_curl_comp}
\begin{split}
\tikzsetnextfilename{de_rham/de_rham_2d_cart_curl}
\begin{tikzpicture}[commutative diagrams/every diagram,
arr/.style={commutative diagrams/.cd,
	every arrow,
	every label}]
\def\minW{25mm}
\def\minH{15mm}
\def\hShift{10mm}
\def\vShiftA{20mm}
\def\vShiftB{4mm}
\def\auxA{1mm}
\def\auxB{4mm}
%
% Nodes
\node (X0) [] at (0, 0) {$\CaSpB^0$};
\node (X1) [xshift=1.1*\minW] at (X0) {$\CaSpB^1$};
\node (X2) [xshift=1.1*\minW] at (X1) {$\CaSpB^2$};
%\node (X3) [xshift=1.1*\minW] at (X2) {$\CySpT^{m, \, 3}$};
%
\node (X0h) [yshift=-\minH] at (X0) {$\CaSpB_h^{0}$};
\node (X1h) [yshift=-\minH] at (X1) {$\CaSpB_h^{1}$};
\node (X2h) [yshift=-\minH] at (X2) {$\CaSpB_h^{2}$};
%\node (X3h) [yshift=-\minH] at (X3) {$\CySpT_h^{m, \, 3}$};
%
\node (R) [xshift=-0.6*\minW] at (X0) {$\R$};
\node (Z) [xshift=0.6*\minW] at (X2) {$0$};
\node (Rh) [yshift=-\minH] at (R) {$\R$};
\node (Zh) [yshift=-\minH] at (Z) {$0$};
% Arrows
\path[draw, arr] (X0) -- (X1) node [midway] {$\gradt$};
\path[draw, arr] (X1) -- (X2) node [midway] {$\curltsc$};
%\path[draw, arr] (X2) -- (X3) node [midway] {$\divergtm$};
\path[draw, arr] (R) -- (X0);
\path[draw, arr] (X2) -- (Z);
\path[draw, arr] (X0h) -- (X1h) node [midway] {$\gradt$};
\path[draw, arr] (X1h) -- (X2h) node [midway] {$\curltsc$};
%\path[draw, arr] (X2h) -- (X3h) node [midway] {$\divergtm$};
\path[draw, arr] (Rh) -- (X0h);
\path[draw, arr] (X2h) -- (Zh);
\path[draw, arr] (X0) -- (X0h) node [midway, anchor=west, yshift=0.8mm] {$\Pi^0$};
\path[draw, arr] (X1) -- (X1h) node [midway, anchor=west, yshift=0.8mm] {$\Pi^1$};
\path[draw, arr] (X2) -- (X2h) node [midway, anchor=west, yshift=0.8mm] {$\Pi^2$};
%\path[draw, arr] (X3) -- (X3h) node [midway, anchor=west, yshift=0.8mm] {$\Pibr^3$};
\end{tikzpicture}
\end{split}
\end{equation}
\begin{equation}
\label{eq:2D_div_comp}
\begin{split}
\tikzsetnextfilename{de_rham/de_rham_2d_cart_div}
\begin{tikzpicture}[commutative diagrams/every diagram,
arr/.style={commutative diagrams/.cd,
	every arrow,
	every label}]
\def\minW{25mm}
\def\minH{15mm}
\def\hShift{10mm}
\def\vShiftA{20mm}
\def\vShiftB{4mm}
\def\auxA{1mm}
\def\auxB{4mm}
%
% Nodes
\node (X0) [] at (0, 0) {$\CaSpB^0$};
\node (X1) [xshift=1.1*\minW] at (X0) {$\CaSpB^{1*}$};
\node (X2) [xshift=1.1*\minW] at (X1) {$\CaSpB^2$};
%\node (X3) [xshift=1.1*\minW] at (X2) {$\CySpT^{m, \, 3}$};
%
\node (X0h) [yshift=-\minH] at (X0) {$\CaSpB_h^{0}$};
\node (X1h) [yshift=-\minH] at (X1) {$\CaSpB_h^{1*}$};
\node (X2h) [yshift=-\minH] at (X2) {$\CaSpB_h^{2}$};
%\node (X3h) [yshift=-\minH] at (X3) {$\CySpT_h^{m, \, 3}$};
%
\node (R) [xshift=-0.6*\minW] at (X0) {$\R$};
\node (Z) [xshift=0.6*\minW] at (X2) {$0$};
\node (Rh) [yshift=-\minH] at (R) {$\R$};
\node (Zh) [yshift=-\minH] at (Z) {$0$};
% Arrows
\path[draw, arr] (X0) -- (X1) node [midway] {$\rott$};
\path[draw, arr] (X1) -- (X2) node [midway] {$\divergt$};
%\path[draw, arr] (X2) -- (X3) node [midway] {$\divergtm$};
\path[draw, arr] (R) -- (X0);
\path[draw, arr] (X2) -- (Z);
\path[draw, arr] (X0h) -- (X1h) node [midway] {$\rott$};
\path[draw, arr] (X1h) -- (X2h) node [midway] {$\divergt$};
%\path[draw, arr] (X2h) -- (X3h) node [midway] {$\divergtm$};
\path[draw, arr] (Rh) -- (X0h);
\path[draw, arr] (X2h) -- (Zh);
\path[draw, arr] (X0) -- (X0h) node [midway, anchor=west, yshift=0.8mm] {$\Pi^0$};
\path[draw, arr] (X1) -- (X1h) node [midway, anchor=west, yshift=0.8mm] {$\Pi^{1*}$};
\path[draw, arr] (X2) -- (X2h) node [midway, anchor=west, yshift=0.8mm] {$\Pi^2$};
%\path[draw, arr] (X3) -- (X3h) node [midway, anchor=west, yshift=0.8mm] {$\Pibr^3$};
\end{tikzpicture}
\end{split}
\end{equation}
%\begin{equation}
%%\label{eq:2D_curl_comp}
%\begin{split}
%\includegraphics[]{img/de_rham/cart_2D_grad_ext_diag_discr}
%\end{split}
%\end{equation}
%\begin{equation}
%\begin{split}
%\includegraphics{img/de_rham/cart_2D_rot_ext_diag_discr}
%\end{split}
%\end{equation}
Both the continuous and the discrete sequences
in~\eqref{eq:2D_curl_comp} and~\eqref{eq:2D_div_comp} are exact for
simply connected domains~\cite{buffa:2011,buffa:2013}. 
Moreover we assume that the following estimates hold:
\begin{equation}
\label{eq:app_est_2D_cart}
\!\!\!\begin{alignedat}{3}
& \norm{u - \Pi^0 u}_{\Hsp^1(\srfdomflat)}  &&\leq C
h^{s}\norm{u}_{\Hsp^{s+1}(\srfdomflat)}~, &&\quad u \in
\Hsp^{s+1}(\srfdomflat)~,\\ 
& \norm{\uv - \Pi^1 \uv}_{\Hsp(\curltsc;\,\srfdomflat)}&&\leq C
h^{s}\norm{\uv}_{\Hsp^{s}(\curltsc;\,\srfdomflat)}~, &&\quad \uv \in
\Hsp^{s}(\curltsc;\,\srfdomflat)~,\\ 
&\norm{\uv - \Pi^{1*} \uv}_{\Hsp(\divergt;\,\srfdomflat)} &&\leq C
h^{s}\norm{\uv}_{\Hsp^{s}(\divergt;\,\srfdomflat)}~, &&\quad \uv \in
\Hsp^{s}(\divergt;\,\srfdomflat)~,\\ 
&\norm{u - \Pi^2 u}_{\Lsp^2(\srfdomflat)} &&\leq C
h^{s}\norm{u}_{\Hsp^{s}(\srfdomflat)}~, &&\quad u \in
\Hsp^{s}(\srfdomflat).
\end{alignedat}\!
\end{equation}

In \cite{oh:2015}, the discrete Cartesian spaces were chosen as
piecewise linear for $\CaSpB_h^0$, lowest order Nedelec for
$\CaSpB_h^1$, lowest order Raviart-Thomas for $\CaSpB_h^{1*}$ and
piecewise constant for $\CaSpB_h^2$. In this work, we will use \ac{IGA} spaces
that  will be introduced
in the next section. 

\medskip
The strategy to define the discrete counterparts
of~\eqref{eq:Zm_notation} is based on the use of operators that link
the spaces in cylindrical coordinates to the standard spaces in
Cartesian coordinates. The definition of the discrete spaces and of
the projectors, together with error estimates, are then deduced from
those defined for the spaces in Cartesian coordinates. The diagram
\begin{equation}
\label{eq:gen_strategy}
\begin{split}
\tikzsetnextfilename{de_rham/de_rham_3d_cyl_gen_strat}
\begin{tikzpicture}[commutative diagrams/every diagram,
arr/.style={commutative diagrams/.cd,
	every arrow,
	every label}]
\def\minW{25mm}
\def\minH{18mm}
\def\hShift{10mm}
\def\vShiftA{20mm}
\def\vShiftB{4mm}
\def\auxA{1mm}
\def\auxB{4mm}
%
% Nodes
\node (X0) [] at (0, 0) {$\CySpT^{m, \, 0}$};
\node (X1) [xshift=1.1*\minW] at (X0) {$\CySpT^{m, \, 1}$};
\node (X2) [xshift=1.1*\minW] at (X1) {$\CySpT^{m, \, 2}$};
\node (X3) [xshift=1.1*\minW] at (X2) {$\CySpT^{m, \, 3}$};
\node (Y0) [yshift=-1*\minH] at (X0) {$\CaSpB^{0}$};
\node (Y1) [yshift=-1*\minH] at (X1) {$\CaSpB^{1} \times \CaSpB^{0}$};
\node (Y2) [yshift=-1*\minH] at (X2) {$\CaSpB^{1*} \times \CaSpB^{2}$};
\node (Y3) [yshift=-1*\minH] at (X3) {$\CaSpB^{2}$};
\node (Y0h) [yshift=-2*\minH] at (X0) {$\CaSpB_h^{0}$};
\node (Y1h) [yshift=-2*\minH] at (X1) {$\CaSpB_h^{1} \times \CaSpB_h^{0}$};
\node (Y2h) [yshift=-2*\minH] at (X2) {$\CaSpB_h^{1*} \times \CaSpB_h^{2}$};
\node (Y3h) [yshift=-2*\minH] at (X3) {$\CaSpB_h^{2}$};
\node (X0h) [yshift=-3*\minH] at (X0) {$\CySpT_h^{m, \, 0}$};
\node (X1h) [yshift=-3*\minH] at (X1) {$\CySpT_h^{m, \, 1}$};
\node (X2h) [yshift=-3*\minH] at (X2) {$\CySpT_h^{m, \, 2}$};
\node (X3h) [yshift=-3*\minH] at (X3) {$\CySpT_h^{m, \, 3}$};
% Arrows
\path[draw, arr] (X0) -- (X1) node [midway] {$\gradtm$};
\path[draw, arr] (X1) -- (X2) node [midway] {$\curltm$};
\path[draw, arr] (X2) -- (X3) node [midway] {$\divergtm$};
\path[draw, arr] (Y0) -- (Y1) node [midway] {$\vec{G}$};
\path[draw, arr] (Y1) -- (Y2) node [midway] {$\vec{C}$};
\path[draw, arr] (Y2) -- (Y3) node [midway] {$D$};
\path[draw, arr] (Y0h) -- (Y1h) node [midway] {$\vec{G}$};
\path[draw, arr] (Y1h) -- (Y2h) node [midway] {$\vec{C}$};
\path[draw, arr] (Y2h) -- (Y3h) node [midway] {$D$};
\path[draw, arr] (X0h) -- (X1h) node [midway] {$\gradtm$};
\path[draw, arr] (X1h) -- (X2h) node [midway] {$\curltm$};
\path[draw, arr] (X2h) -- (X3h) node [midway] {$\divergtm$};
\path[draw, arr] (X0) -- (Y0) node [midway, anchor=west, yshift=0.8mm] {$\eta_{m, \,0}$};
\path[draw, arr] (X1) -- (Y1) node [midway, anchor=west, yshift=0.8mm] {$\eta_{m, \,1}$};
\path[draw, arr] (X2) -- (Y2) node [midway, anchor=west, yshift=0.8mm] {$\eta_{m, \,2}$};
\path[draw, arr] (X3) -- (Y3) node [midway, anchor=west, yshift=0.8mm] {$\eta_{m, \,3}$};
\path[draw, arr] (Y0) -- (Y0h) node [midway, anchor=west, yshift=0.8mm] {$\Pi^0$};
\path[draw, arr] (Y1) -- (Y1h) node [midway, anchor=west, yshift=0.8mm] {$\Pi^1 \times \Pi^0$};
\path[draw, arr] (Y2) -- (Y2h) node [midway, anchor=west, yshift=0.8mm] {$\Pi^{1*} \times \Pi^2$};
\path[draw, arr] (Y3) -- (Y3h) node [midway, anchor=west, yshift=0.8mm] {$\Pi^2$};
\path[draw, arr] (Y0h) -- (X0h) node [midway, anchor=west, yshift=0.8mm] {$\eta_{m, \,0}^{-1}$};
\path[draw, arr] (Y1h) -- (X1h) node [midway, anchor=west, yshift=0.8mm] {$\eta_{m, \,1}^{-1}$};
\path[draw, arr] (Y2h) -- (X2h) node [midway, anchor=west, yshift=0.8mm] {$\eta_{m, \,2}^{-1}$};
\path[draw, arr] (Y3h) -- (X3h) node [midway, anchor=west, yshift=0.8mm] {$\eta_{m, \,3}^{-1}$};
\end{tikzpicture}
\end{split}
\end{equation}
%\begin{equation}
%%\label{eq:gen_strategy}
%\begin{split}
%\includegraphics{img/de_rham/cyl_m_diag_general}
%\end{split}
%\end{equation}
summarizes the general strategy for the discretization that will be
explained in the remaining section.

The first step is the definition of a set of operators $\eta_{m, \,
	k}, ~k=0, \, 1, \, 2, \, 3$ that maps functions from $\CySpT^{m, \,
	k}$ onto the functional spaces defined in Cartesian coordinates
\begin{align*}
\eta_{m, \, 0}&: \CySpT^{m, \, 0}  \rightarrow \CaSpB^0, & u & \mapsto
\dfrac{m}{\rho} u, \\ 
\eta_{m, \, 1}&: \CySpT^{m, \, 1}  \rightarrow \CaSpB^1 \times
\CaSpB^0, & 
\uv & \mapsto 
\begin{bmatrix}
\dfrac{1}{\rho} \parT{m\,u_\rho + u_\theta}\\
\dfrac{m}{\rho}u_z\\
u_\theta
\end{bmatrix},\\
\eta_{m, \, 2} &: \CySpT^{m, \, 2}  \rightarrow \CaSpB^{1*} \times
\CaSpB^2, & 
\uv & \mapsto
\begin{bmatrix}
u_\rho\\
u_z\\
\dfrac{1}{\rho}\parT{mu_\theta - u_\rho}
\end{bmatrix}, \\
\eta_{m, \, 3} &: \CySpT^{m, \, 3} \rightarrow \CaSpB^{2}, &u &
\mapsto u~. 
\end{align*}
These operators are chosen such that $\vec{G}$, $\vec{C}$ and $D$ are
both well-defined and make the top part of the
diagram~\eqref{eq:gen_strategy} commutative.  
Note that, if $\Gamma^0 \neq \emptyset$, these operators are well-defined only on regular subspaces $\CySpTT^{m, \, k}\subset\CySpT^{m,
	\, k}$ where
\begin{equation*}
\label{eq:Zm_tilde}
\begin{aligned}
\CySpTT^{m, \, 0} & = \parG{u \in \CySpT^{m, \, 0}:~ \eta_{m, \, 0}(u)
	\in \CaSpB^0}~, \\ 
\CySpTT^{m, \, 1} & = \parG{\uv \in \CySpT^{m, \, 1}:~ \eta_{m, \,
		1}(\uv) \in \CaSpB^1 \times \CaSpB^0}~, \\ 
\CySpTT^{m, \, 2} & = \parG{\uv \in \CySpT^{m, \, 2}:~ \eta_{m, \,
		2}(\uv) \in \CaSpB^{1*} \times \CaSpB^2}~, \\ 
\CySpTT^{m, \, 3} & = \parG{u \in \CySpT^{m, \, 3}:~ \eta_{m, \, 3}(u)
	\in \CaSpB^2}~. 
\end{aligned}
\end{equation*}
Then, since proper conforming discretizations are known for the spaces
in Cartesian
coordinates~\eqref{eq:2D_curl_comp}--\eqref{eq:2D_div_comp}, the
discrete spaces $\CySpT_h^{m, \, k}$ are built from the Cartesian
discrete spaces using $\eta_{m, \, k}^{-1}$, \textit{i.e.}
\begin{equation}
\label{eq:cyl_phys_disc_sps}
\begin{aligned}
\CySpT_h^{m, \, 0} & = \parG{u_h ~|~ u_h = \eta_{m,
		\,0}^{-1}\parT{\ut_h}, ~ \ut_h \in \CaSpB_h^0}~,\\ 
\CySpT_h^{m, \, 1} & = \parG{\uv_h ~|~ \uv_h = \eta_{m, \,
		1}^{-1}\parT{\uvt_h}, ~ \uvt_h \in \CaSpB_h^1 \times
	\CaSpB_h^0}~,\\ 
\CySpT_h^{m, \, 2} & = \parG{\uv_h ~|~ \uv_h = \eta_{m, \,
		2}^{-1}\parT{\uvt_h}, ~ \uvt_h \in \CaSpB_h^{1*} \times
	\CaSpB_h^2}~,\\ 
\CySpT_h^{m, \, 3} & =\parG{u_h ~|~ u_h = \eta_{m, \,
		3}^{-1}\parT{\ut_h}, ~ \ut_h \in \CaSpB_h^2}~. 
\end{aligned}
\end{equation}
Consistently with this definition, we denote
functions defined on spaces in Cartesian coordinates with a
tilde. Moreover, since in the following discussion the $\rho$ and $z$
components will be often treated differently from the $\theta$
component, we will denote with the subscript $\rho z$ the meridian component of a function
$\uv_{\rho z} = \parTs{u_\rho, \,
	u_z}^T$. 
In order to apply the differential operators to the discrete
functions, it is  useful to define, by composition, the following
operators that act directly on the continuous spaces defined in
Cartesian coordinates: 
\begin{align}
\renewcommand\arraystretch{1.5}
\begin{split}
\gradtsm & = \eta_{m, \, 0}^{-1} \comp \gradtm: \CaSpB^0
\rightarrow \CySpT^{m, \, 1}~, \\
\gradtsm\parT{\ut} & = 
\begin{bmatrix}
\dfrac{1}{m} \parT{\ut + {\rho}\,\pd{\rho}{\ut}}\\
\dfrac{\rho}{m}\pd{z}{\ut}\\
-\ut
\end{bmatrix}~, 
\end{split} \label{eq:gradstar}\\[1.5em]
\begin{split}
\curltsm & = \eta_{m, \, 1}^{-1} \comp \curltm: \CaSpB^1
\times \CaSpB^0 \rightarrow \CySpT^{m, \, 2}~, \\ 
\curltsm \parT{\uvt} & = \begin{bmatrix}
-\ut_z - \pd{z}{\ut_\theta}\\
\ut_\rho + \pd{\rho}{\ut_\theta}\\
\dfrac{1}{m}\parT{\pd{z}\parT{\rho\,\ut_\rho} -
	\pd{\rho}\parT{\rho\,\ut_z} - \pd{z}{\ut_\theta}} 
\end{bmatrix}
\end{split}~,\label{eq:curlstar}\\[1.5em]
\begin{split}
\divergtsm & = \eta_{m, \, 2}^{-1} \comp \divergtm:
\CaSpB^{1*} \times \CaSpB^2 \rightarrow \CySpT^{m, \, 3}~, \\ 
\divergtsm \parT{\uvt} & = \pd{\rho}{\ut_\rho} - \ut_\theta
+ \pd{z}{\ut_z}~.
\end{split}\label{eq:divstar}
\end{align}
Note that, due to the choice of the spaces in Cartesian coordinates,
all the differential
operators~\eqref{eq:gradstar}--\eqref{eq:divstar} are well-defined. Moreover, since the discrete spaces in Cartesian coordinates
are conforming, we have the following result: 
\begin{lemma}
	The discrete spaces~\eqref{eq:cyl_phys_disc_sps} are
	conforming in the spaces defined in~\eqref{eq:Zm_notation}. 
\end{lemma}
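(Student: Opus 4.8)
The plan is to establish the four inclusions $\CySpT_h^{m,k}\subseteq\CySpT^{m,k}$, $k=0,1,2,3$, one at a time, by unwinding the definition \eqref{eq:cyl_phys_disc_sps} and verifying, for a generic element, the two membership conditions built into \eqref{eq:Zm_notation}. So fix $k$ and let $\phi_h\in\CySpT_h^{m,k}$; by \eqref{eq:cyl_phys_disc_sps} we may write $\phi_h=\eta_{m,k}^{-1}(\tilde\phi_h)$ with $\tilde\phi_h$ in the corresponding discrete Cartesian space ($\CaSpB_h^0$, $\CaSpB_h^1\times\CaSpB_h^0$, $\CaSpB_h^{1*}\times\CaSpB_h^2$ or $\CaSpB_h^2$). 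Since those discretizations are conforming, $\tilde\phi_h$ lies in the associated continuous Cartesian space of \eqref{eq:cart_sps}. It then suffices to show that (i) $\phi_h$ is square integrable with respect to the weight $\rho$, and (ii) its image under the relevant cylindrical operator ($\gradtm$ for $k=0$, $\curltm$ for $k=1$, $\divergtm$ for $k=2$) lies in the corresponding $\rho$-weighted $\Lsp^2$ space. For $k=3$ only (i) is needed, and it is immediate: $\eta_{m,3}^{-1}$ is the identity, so $\CySpT_h^{m,3}=\CaSpB_h^2\subseteq\CaSpB^2=\Lsp^2(\srfdomflat)\subseteq\Lsp_\rho^2(\srfdomflat)=\CySpT^{m,3}$.

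For (i), I would use that, by the explicit form of $\eta_{m,k}$, the inverse $\eta_{m,k}^{-1}$ acts by forming linear combinations of the components of its argument with coefficients that are either constants or multiples of $\rho$. Since $\srfdomflat$ is bounded, $\rho$ is bounded on it, while every component of $\tilde\phi_h$ belongs to $\Lsp^2(\srfdomflat)$ (all the continuous Cartesian spaces of \eqref{eq:cart_sps} embed into $\Lsp^2(\srfdomflat)$ or $\Lsp^2(\srfdomflat;\,\R^2)$). Hence each component of $\phi_h$ lies in $\Lsp^2(\srfdomflat)$, and the trivial bound $\int_\srfdomflat v^2\,\rho\,\dd\rho\dd z\leq(\sup_\srfdomflat\rho)\int_\srfdomflat v^2\,\dd\rho\dd z$ upgrades this to membership of $\phi_h$ in $\Lsp_\rho^2(\srfdomflat)$ or $\Lsp_\rho^2(\srfdomflat;\,\R^3)$, as appropriate.

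For (ii), I would invoke the construction of the composed operators directly. By the very definitions \eqref{eq:gradstar}--\eqref{eq:divstar} we have $\gradtm\phi_h=\gradtsm(\tilde\phi_h)$ when $k=0$, $\curltm\phi_h=\curltsm(\tilde\phi_h)$ when $k=1$, and $\divergtm\phi_h=\divergtsm(\tilde\phi_h)$ when $k=2$. As recorded right after \eqref{eq:divstar}, these starred operators are well-defined as maps from the continuous Cartesian spaces into $\CySpT^{m,1}$, $\CySpT^{m,2}$, $\CySpT^{m,3}$ respectively; since $\tilde\phi_h$ lies in the pertinent continuous Cartesian space by conformity, the right-hand sides above, and hence $\gradtm\phi_h$, $\curltm\phi_h$, $\divergtm\phi_h$, lie in the required $\rho$-weighted $\Lsp^2$ space. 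Combining (i) and (ii) gives $\phi_h\in\CySpT^{m,k}$, and as $\phi_h$ was arbitrary, $\CySpT_h^{m,k}\subseteq\CySpT^{m,k}$.

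The argument is essentially bookkeeping, so I do not expect a genuine obstacle; the one point that is not purely formal sits inside step (ii). A priori the formulas \eqref{eq:curlstar}--\eqref{eq:divstar} contain derivatives such as $\partial_\rho(\rho\,\tilde u_z)$ of components of $\tilde\phi_h$ that are merely square integrable (not individually in $\Hsp^1$), so the separate terms make sense only as distributions; what keeps the starred operators --- and hence the whole chain above --- well-defined is the cancellation by which these derivatives recombine into $\rho$ times the full scalar curl (resp. divergence, resp. gradient) of $\tilde\phi_h$, plus lower-order terms. Since this is precisely what has already been argued in the discussion preceding the lemma, here we may take it for granted.
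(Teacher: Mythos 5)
Your proposal is correct and follows essentially the same route as the paper: write each discrete function as $\eta_{m,k}^{-1}$ of a Cartesian discrete function, use conformity of the Cartesian discretization, and control the weighted norms of the function and of its image under the cylindrical differential operator via the boundedness $0<\rho<R$ on $\srfdomflat$. The only difference is presentational --- the paper carries out the explicit norm bounds (including the recombination of the distributional derivatives into $\rho\,\curltsc$ and $\rho\,\divergt$ terms) that you partly defer to the stated well-definedness of the operators \eqref{eq:gradstar}--\eqref{eq:divstar}.
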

\begin{proof}
	We have to show that $\CySpT_h^{m, \, k} \subset \CySpT^{m, \,
		k}$, $k=1, \, \ldots, \, 3$. 
	Due to the boundness of $\srfdomflat$, there exists $R > 0$
	such that $0 < \rho < R, ~\parT{\rho, \, z} \in \srfdomflat$. 
	Considering $\ut_h \in \CaSpB_h^0$, we have that  $\ut_h \in
	\CaSpB^0$ because of the conforming  discretization of the
	Cartesian
	complexes~\eqref{eq:2D_curl_comp}--\eqref{eq:2D_div_comp}. So
	$u_h \in \CySpT^{m, \, 0}$, in fact 
	\begin{align*}
	\norm{u_h}_\rho^2 = \norm{\rho \ut_h}_\rho^2 \leq R^3
	\,\norm{\ut_h}^2 < \infty 
	\end{align*}
	and
	\begin{align*}
	\norm{\gradtm u_h}_\rho^2 & = \norm{\gradtsm
		\ut_h}_\rho^2 \leq R \,
	\norm{\gradtsm \ut_h}^2\\ 
	& \leq 3R\, \norm{\ut_h}^2 + 2R^3 \, \norm{\gradt \ut_h}^2 <
	\infty 
	\end{align*}
	since $\ut_h \in \CaSpB^0$~. The proofs of the other cases are similar. For $\CySpT_h^{m,
		\, 1} \subset \CySpT^{m, \, 1}$, given $\uv_h \in
	\CySpT_h^{m, \, 1}$, we have that
	\begin{align*}
	\norm{\uv_h}_\rho^2 & =  \norm{\dfrac{1}{m}\parT{\rho \,
			\ut_{\rho, \, h} - \ut_{\theta, \,
				h}}}_\rho^2 +
	\norm{\dfrac{\rho}{m}\ut_{z, \,
			h}}_\rho^2 + \norm{\ut_{\theta, \,
			h}}_\rho^2\\ 
	& \leq 2 R^3\,\norm{\ut_{\rho, \, h}}^2 + R^3\,\norm{\ut_{z,
			\, h}}^2 +  3R\,\norm{\ut_{\theta, \, h}}^2 < \infty 
	\end{align*}
	%	since $\uvt_{\rho z, \, h} \in \CaSpB^1$ and $\ut_{\theta, \,
	%	h} \in \CaSpB^0$ 
	and 
	\begin{align*}
	\norm{\curltm\uv_h}_\rho^2 & =
	\norm{\curltsm\uvt_h}_\rho^2
	\\ 
	& =\norm{\uvt_{\rho z, \, h} + \gradt \ut_{\theta, \,
			h}}_\rho^2 \\ 
	& + \norm{\dfrac{1}{m}\parT{-\rho\,\curltsc \uvt_{\rho z, \,
				h} - \ut_{z, \, h} - \pd{z} \ut_{\theta, \,h}}}_\rho^2 \\ 
	& \leq 6\, \norm{\uvt_{\rho z, \, h}}_\rho^2 + 4
	\,\norm{\rho\,\curltsc \uvt_{\rho z, \, h}}_\rho^2 +
	4\,\norm{\gradt \ut_{\theta, \, h}}_\rho^2\\ 
	& \leq 6R\, \norm{\uvt_{\rho z, \, h}}^2 + 4R^3\,
	\norm{\curltsc \uvt_{\rho z, \, h}}^2 + 4R\, \norm{\gradt
		\ut_{\theta, \, h}}^2 < \infty 
	\end{align*}
	since $\uvt_{\rho z, \, h} \in \CaSpB^1$ and $\ut_{\theta, \,
		h} \in \CaSpB^0$.\\ 
	For $\CySpT_h^{m, \, 2} \subset \CySpT^{m, \, 2}$, given
	$\uv_h \in \CySpT_h^{m, \, 2}$, we have  that
	\begin{align*}
	\norm{\uv_h}_\rho^2 & =  \norm{\uvt_{\rho z, \, h}}_\rho^2 +
	\norm{\dfrac{1}{m}\parT{\rho
			\,\ut_{\theta, \, h} + \ut_{\rho, \,
				h}}}_\rho^2\\ 
	& \leq 3R\,\norm{\uvt_{\rho z, \, h}}^2 +
	2R^3\,\norm{\ut_{\theta, \, h}}^2 < \infty 
	\end{align*}
	%	since  $\uvt_{\rho z, \, h} \in \CaSpB^{1*}$ and $\ut_{\theta,
	%	\, h} \in \CaSpB^2$ 
	and
	\begin{align*}
	\norm{\divergtm \uv_h}_\rho^2 & = \norm{\divergtsm
		\uvt_h}_\rho^2 \leq
	2\,\norm{\divergt
		\uvt_{\rho z, \, h}}_\rho^2
	+ 2\,\norm{\ut_{\theta, \,
			h}}_\rho^2 \\ 
	& \leq 2R\,\norm{\divergt \uvt_{\rho z, \, h}}^2 +
	2R\,\norm{\ut_{\theta, \, h}}^2 < \infty 
	\end{align*}
	since  $\uvt_{\rho z, \, h} \in \CaSpB^{1*}$ and $\ut_{\theta,
		\, h} \in \CaSpB^2$.\\ 
	Finally, $\CySpT_h^{m, \, 3} \subset \CySpT^{m, \, 3}$, given
	$u_h \in \CySpT_h^{m, \, 3}$, we have that
	\begin{align*}
	\norm{u_h}_\rho^2 & = \norm{\ut_h}_\rho^2 \leq R\,
	\norm{\ut_h}^2 < \infty 
	\end{align*}
	since $\ut_h \in \CaSpB^2$.
\end{proof}
The projectors $\Pibr^{m, \, k}: \CySpTT^{m, \, k} \rightarrow
\CySpT_h^{m, \, k}$ are defined as in \cite{oh:2015}, exploiting those
used in Cartesian
coordinates~\eqref{eq:2D_curl_comp}--\eqref{eq:2D_div_comp}, that is
\begin{equation}
\label{eq:cyl_interp}
\begin{alignedat}{3}
&\Pibr^0 u && = \parT{\eta_{m, \, 0}^{-1} \comp \Pi^0
	\comp \eta_{m, \, 0}}\parT{u}~,  &&\quad u \in \CySpTT^{m,
	\, 0}~,\\ 
&\Pibr^1 \uv && = \parT{\eta_{m, \, 1}^{-1}
	\comp \parT{\Pi^1 \times \Pi^0}\comp \eta_{m, \,
		1}}\parT{\uv}~,  && \quad \uv \in \CySpTT^{m, \, 1}~,\\ 
&\Pibr^2 \uv && = \parT{\eta_{m, \, 2}^{-1}
	\comp \parT{\Pi^{1*}\times \Pi^2}\comp \eta_{m, \,
		2}}\parT{\uv}~,  && \quad \uv \in \CySpTT^{m, \, 2}~,\\ 
&\Pibr^3 u && = \parT{\eta_{m, \, 3}^{-1} \comp
	\Pi^2\comp \eta_{m, \, 3}}\parT{u} = \Pi^2\parT{u}~,
&&\quad u \in \CySpTT^{m, \, 3}~. 
\end{alignedat}
\end{equation}
The following lemma shows that they are actually projectors.
\begin{lemma}
	The interpolators \eqref{eq:cyl_interp} are projectors, that is
	\begin{alignat}{3}
	&\Pibr^0 u_h && = u_h~, && \qquad u_h \in
	\CySpT_h^0~, \label{eq:proj_p0}\\ 
	&\Pibr^1 \uv_h && = \uv_h~, && \qquad\uv_h \in
	\CySpT_h^1~,\label{eq:proj_p1}\\ 
	&\Pibr^2 \uv_h && = \uv_h~, && \qquad\uv_h \in
	\CySpT_h^2~,\label{eq:proj_p2}\\ 
	&\Pibr^3 u_h && = u_h~, && \qquad u_h \in
	\CySpT_h^3~.\label{eq:proj_p3} 
	\end{alignat}
\end{lemma}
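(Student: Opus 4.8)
The plan is to unwind the definition~\eqref{eq:cyl_interp} of each interpolator --- a conjugation of a Cartesian projector by the change-of-coordinates map $\eta_{m, \, k}$ --- and to reduce everything to two elementary facts. First I would record that, for $m\neq0$ and $\rho>0$, each $\eta_{m, \, k}$ is a bijection between the regular cylindrical space $\CySpTT^{m, \, k}$ and its Cartesian counterpart, the inverse being read off from the explicit formulas behind~\eqref{eq:gradstar}--\eqref{eq:divstar} (and from the displayed expressions for $\eta_{m, \, k}^{-1}$); in particular $\eta_{m, \, k} \comp \eta_{m, \, k}^{-1}$ is the identity on that Cartesian space. Second, by the standing assumption the Cartesian operators $\Pi^0$, $\Pi^1$, $\Pi^{1*}$, $\Pi^2$ are projectors onto $\CaSpB_h^0$, $\CaSpB_h^1$, $\CaSpB_h^{1*}$, $\CaSpB_h^2$, so they restrict to the identity on those discrete spaces, and hence so do the product maps $\Pi^1\times\Pi^0$ on $\CaSpB_h^1\times\CaSpB_h^0$ and $\Pi^{1*}\times\Pi^2$ on $\CaSpB_h^{1*}\times\CaSpB_h^2$.

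For~\eqref{eq:proj_p0} I would take $u_h\in\CySpT_h^{m, \, 0}$ and use~\eqref{eq:cyl_phys_disc_sps} to write $u_h=\eta_{m, \, 0}^{-1}(\ut_h)$ with $\ut_h\in\CaSpB_h^0$, so that $\eta_{m, \, 0}(u_h)=\ut_h\in\CaSpB_h^0\subset\CaSpB^0$ and in particular $u_h\in\CySpTT^{m, \, 0}$, making $\Pibr^0 u_h$ well-defined; then the computation runs
\[
\Pibr^0 u_h = \eta_{m, \, 0}^{-1}\parT{\Pi^0\parT{\eta_{m, \, 0}(u_h)}} = \eta_{m, \, 0}^{-1}\parT{\Pi^0\ut_h} = \eta_{m, \, 0}^{-1}(\ut_h) = u_h~,
\]
the third equality being the only nontrivial step and resting on $\Pi^0$ fixing $\ut_h\in\CaSpB_h^0$. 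The remaining identities~\eqref{eq:proj_p1}--\eqref{eq:proj_p3} follow by the verbatim same three-step computation with $(\eta_{m, \, 0},\Pi^0,\CaSpB_h^0)$ replaced respectively by $(\eta_{m, \, 1},\Pi^1\times\Pi^0,\CaSpB_h^1\times\CaSpB_h^0)$, $(\eta_{m, \, 2},\Pi^{1*}\times\Pi^2,\CaSpB_h^{1*}\times\CaSpB_h^2)$ and $(\eta_{m, \, 3},\Pi^2,\CaSpB_h^2)$; for the last one $\eta_{m, \, 3}$ is the identity, so $\Pibr^3 u_h=\Pi^2 u_h=u_h$ reduces immediately to $\Pi^2$ being a projector. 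The same unwinding would also show that the range of $\Pibr^k$ is contained in $\CySpT_h^{m, \, k}$, so that the $\Pibr^k$ are genuine projectors onto the discrete spaces, although only the four displayed identities are needed here.

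I do not expect a genuine obstacle: once the definitions are in place the statement is a formal consequence of the Cartesian operators being projectors. The one point I would be careful to state explicitly is the well-definedness of the conjugation --- that $\CySpT_h^{m, \, k}\subseteq\CySpTT^{m, \, k}$, so $\eta_{m, \, k}$ may legitimately be applied to discrete functions and its image lands in the domain of the Cartesian projector --- but this is immediate from~\eqref{eq:cyl_phys_disc_sps}, since $\eta_{m, \, k}$ carries $\CySpT_h^{m, \, k}$ bijectively onto the corresponding Cartesian discrete space, which is conforming in the Cartesian continuous space by the assumptions attached to~\eqref{eq:2D_curl_comp}--\eqref{eq:2D_div_comp}.
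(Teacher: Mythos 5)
Your proposal is correct and follows essentially the same route as the paper: unwind the conjugated definition \eqref{eq:cyl_interp}, note that $\eta_{m,\,k}$ carries $\CySpT_h^{m,\,k}$ onto the Cartesian discrete space, and use that the Cartesian operators $\Pi^0$, $\Pi^1\times\Pi^0$, $\Pi^{1*}\times\Pi^2$, $\Pi^2$ fix elements of $\CaSpB_h^0$, $\CaSpB_h^1\times\CaSpB_h^0$, $\CaSpB_h^{1*}\times\CaSpB_h^2$, $\CaSpB_h^2$. If anything you are slightly more careful than the paper, which in its displayed computation omits the final application of $\eta_{m,\,0}^{-1}$ and leaves the well-definedness of the conjugation implicit.
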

\begin{proof}
	For \eqref{eq:proj_p3} the result is immediate since the
	projector coincides with the one defined on the Cartesian
	space. 
	Also for \eqref{eq:proj_p0}--\eqref{eq:proj_p2} the result is
	trivial and due to the definition of the spaces and to the result
	in Cartesian coordinates. Taking, for example
	\eqref{eq:proj_p0}, $u_h \in \CySpT_h^0$, we have that
	\begin{equation*}
	\Pibr^0 u_h = \Pi^0\parT{\eta_{m, \, 0}\parT{u_h}} =
	\Pi^0\parT{\ut_h} = \ut_h~, \qquad\ut_h \in \CaSpB_h^0~. 
	\end{equation*}
\end{proof}
We show that the continuous and discrete spaces, together
with the projectors, form a commutative diagram. We start proving the
commutativity of the top part of the diagram~\eqref{eq:gen_strategy}. 
\begin{lemma}
	\label{lemma:comm_top}
	The following diagram commutes:
	\begin{equation*}
	\begin{split}
	\tikzsetnextfilename{de_rham/de_rham_3d_cyl_top_tilde}
	\begin{tikzpicture}[commutative diagrams/every diagram,
	arr/.style={commutative diagrams/.cd,
		every arrow,
		every label}]
	\def\minW{25mm}
	\def\minH{18mm}
	\def\hShift{10mm}
	\def\vShiftA{20mm}
	\def\vShiftB{4mm}
	\def\auxA{1mm}
	\def\auxB{4mm}
	%
	% Nodes
	\node (X0) [] at (0, 0) {$\CySpTT^{m, \, 0}$};
	\node (X1) [xshift=1.1*\minW] at (X0) {$\CySpTT^{m, \, 1}$};
	\node (X2) [xshift=1.1*\minW] at (X1) {$\CySpTT^{m, \, 2}$};
	\node (X3) [xshift=1.1*\minW] at (X2) {$\CySpTT^{m, \, 3}$};
	\node (Y0) [yshift=-1*\minH] at (X0) {$\CaSpB^{0}$};
	\node (Y1) [yshift=-1*\minH] at (X1) {$\CaSpB^{1} \times \CaSpB^{0}$};
	\node (Y2) [yshift=-1*\minH] at (X2) {$\CaSpB^{1*} \times \CaSpB^{2}$};
	\node (Y3) [yshift=-1*\minH] at (X3) {$\CaSpB^{2}$};
	% Arrows
	\path[draw, arr] (X0) -- (X1) node [midway] {$\gradtm$};
	\path[draw, arr] (X1) -- (X2) node [midway] {$\curltm$};
	\path[draw, arr] (X2) -- (X3) node [midway] {$\divergtm$};
	\path[draw, arr] (Y0) -- (Y1) node [midway] {$\vec{G}$};
	\path[draw, arr] (Y1) -- (Y2) node [midway] {$\vec{C}$};
	\path[draw, arr] (Y2) -- (Y3) node [midway] {$D$};
	\path[draw, arr] (X0) -- (Y0) node [midway, anchor=west, yshift=0.8mm] {$\eta_{m, \,0}$};
	\path[draw, arr] (X1) -- (Y1) node [midway, anchor=west, yshift=0.8mm] {$\eta_{m, \,1}$};
	\path[draw, arr] (X2) -- (Y2) node [midway, anchor=west, yshift=0.8mm] {$\eta_{m, \,2}$};
	\path[draw, arr] (X3) -- (Y3) node [midway, anchor=west, yshift=0.8mm] {$\eta_{m, \,3}$};
	\end{tikzpicture}
	\end{split}
	\end{equation*}
%	\begin{equation*}
%	\includegraphics{img/de_rham/cyl_m_diag_general_top_tilde} 
%	\end{equation*}
\end{lemma}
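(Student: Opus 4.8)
The plan is as follows. Recall that the starred operators $\gradtsm$, $\curltsm$, $\divergtsm$ of \eqref{eq:gradstar}--\eqref{eq:divstar} are the cylindrical differential operators composed with the maps $\eta_{m, \, k}^{-1}$, so that $\vec{G} = \eta_{m, \, 1}\comp\gradtsm = \eta_{m, \, 1}\comp\gradtm\comp\eta_{m, \, 0}^{-1}$ and, analogously, $\vec{C} = \eta_{m, \, 2}\comp\curltm\comp\eta_{m, \, 1}^{-1}$ and $D = \eta_{m, \, 3}\comp\divergtm\comp\eta_{m, \, 2}^{-1}$. With this reading, commutativity of the three squares --- that is, $\vec{G}\comp\eta_{m, \, 0} = \eta_{m, \, 1}\comp\gradtm$ on $\CySpTT^{m, \, 0}$, and likewise for $\vec{C}$ and $D$ --- is immediate, once one knows that $\eta_{m, \, k}$ restricted to $\CySpTT^{m, \, k}$ is invertible with inverse given by the explicit formula, and that $\vec{G}$, $\vec{C}$, $D$ really do take values in the Cartesian spaces written on the bottom row. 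So the proof reduces to these two well-posedness points.

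For the second point I would compute $\vec{G}$, $\vec{C}$, $D$ explicitly. For $\vec{G}$: take $u \in \CySpTT^{m, \, 0}$, put $\ut = \eta_{m, \, 0}(u) = (m/\rho)\,u \in \CaSpB^0$, substitute $u = (\rho/m)\,\ut$ into formula \eqref{eq:cyl_gradm} for $\gradtm u$, and expand by the product rule in the weak sense; the components of $\gradtm u$ become those of $\gradtsm\ut$, and after applying $\eta_{m, \, 1}$ the explicit factors of $\rho$ cancel the $1/\rho$ in $\eta_{m, \, 1}$, leaving $\vec{G}(\ut) = (\gradt\ut,\,-\ut)$, which lies in $\CaSpB^1 \times \CaSpB^0$ because $\gradt\ut$ has vanishing scalar curl. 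The curl and divergence squares are longer but structurally identical: one expands the terms $\partial_\rho(\rho\,\cdot\,)$ by the product rule, follows the cancellation of the $1/\rho$ and $\rho$ weights, and regroups the surviving terms into $\curltsc$, $\rott$, $\gradt$ and $\divergt$; this yields $\vec{C}(\uvt) = \big(-\mat{P}\,\uvt_{\rho z} - \rott\ut_\theta,\; -\curltsc\uvt_{\rho z}\big)$ and $D(\uvt) = \divergt\uvt_{\rho z} - \ut_\theta$. Their codomains are then easily verified: $\divergt(\mat{P}\,\uvt_{\rho z}) = \curltsc\uvt_{\rho z}$ and $\divergt(\rott\ut_\theta) = 0$ give $\vec{C}(\uvt) \in \CaSpB^{1*} \times \CaSpB^2$, while $D(\uvt) \in \Lsp^2(\srfdomflat) = \CaSpB^2$ is immediate. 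These are, in effect, the same manipulations already performed when \eqref{eq:gradstar}--\eqref{eq:divstar} were derived, read in reverse.

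For the first point --- invertibility of $\eta_{m, \, k}$ on $\CySpTT^{m, \, k}$ --- injectivity is obvious, since each $\eta_{m, \, k}$ is pointwise multiplication by a nowhere-vanishing scalar factor or an invertible matrix; surjectivity onto the Cartesian space follows by applying the explicit inverse formula and checking that the image lies in $\CySpT^{m, \, k}$, which is exactly the kind of estimate used in the conformity lemma (bounding the weighted norms by the unweighted ones via $0 < \rho < R$ on $\srfdomflat$).

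The one step that is not pure bookkeeping is the behaviour on the axis $\Gamma_0$. There $\rho \to 0$, and both $\eta_{m, \, k}$ and $\eta_{m, \, k}^{-1}$ carry singular factors, so a priori the manipulations above are only formal identities between singular expressions. This is precisely why the statement is formulated on the regular subspaces $\CySpTT^{m, \, k}$ rather than on all of $\CySpT^{m, \, k}$: by construction $u \in \CySpTT^{m, \, k}$ exactly when $\eta_{m, \, k}(u)$ is a genuine element of the unweighted Cartesian space, and this turns every cancellation above into an equality of honest Cartesian functions rather than a formal one. When $\Gamma_0 = \emptyset$, $\rho$ is bounded away from $0$ on $\srfdomflat$, the regular subspaces coincide with the full ones, and there is nothing to check near the axis. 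This axis analysis is the point I expect to require the most care.
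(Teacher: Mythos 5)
Your proposal is correct and follows essentially the same route as the paper: both proofs verify each square by direct component-wise computation, showing that $\eta_{m,k+1}\comp$ (cylindrical operator) applied to $u$ can be rewritten purely in terms of Cartesian operators acting on $\eta_{m,k}(u)$, and your closed forms $\vec{G}(\ut)=(\gradt\ut,-\ut)$, $\vec{C}(\uvt)=(-\mat{P}\uvt_{\rho z}-\rott\ut_\theta,\,-\curltsc\uvt_{\rho z})$, $D(\uvt)=\divergt\uvt_{\rho z}-\ut_\theta$ agree with what the paper's calculations produce. Your additional remarks on the codomain checks and on the role of the regular subspaces $\CySpTT^{m,k}$ near the axis make explicit points the paper leaves implicit, but do not change the argument.
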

\begin{proof}
	For the first part involving the gradient, we have that
	\begin{equation*}
	\parT{\etav_{1, \, m} \comp \gradtm} u = \parT{\mathbf{G}
		\comp \eta_{0, \, m}} u~, \qquad u \in \CySpTT^{m, \, 0}~.
	\end{equation*}
	In fact, in the left-hand side we have that
	\begin{equation*} 
	%	\label{eq:eta1_grad}
	\begin{aligned}
	\gradtm u &= \parT{ \pd{\rho}{u}, \, \pd{z}{u}, -
		\dfrac{m}{\rho} u}^T~, \\ 
	\etav_{1, \, m} \comp \gradtm
	u&= \parT{\dfrac{1}{\rho}\parT{m \,\pd{\rho}{u} -
			\dfrac{m}{\rho} u}, \,\dfrac{m}{\rho} \pd{z}{u}, \,  -
		\dfrac{m}{\rho} u}\\ 
	& = \parT{\pd{\rho}\parT{\dfrac{m}{\rho} u},
		\,\pd{z}\parT{\dfrac{m}{\rho} u} , \,  - \dfrac{m}{\rho} u},  
	\end{aligned}
	\end{equation*}
	which is clearly equal to the right-hand side, since $\eta_{0,
		\, m} u =\tfrac{m}{\rho} u \in \CaSpB^0$, and belongs to
	$\CaSpB^1 \times \CaSpB^0$~. \\ 
	For the part involving the curl $\parT{\etav_{2, \, m} \comp
		\curltm} \uv = \parT{\mathbf{C} \comp \etav_{1, \, m}}
	\uv~, \uv \in \widetilde{\Zvsp}^{m, \, 1}$.	In fact, in
	the left-hand side we have that
	\begin{equation*}
	\begin{aligned}
	\curltm \uv &= \begin{bmatrix}
	-\dfrac{m}{\rho}u_z - \pd{z}{u_\theta}\\
	\dfrac{1}{\rho}\parT{\pd{\rho}\parT{\rho\,u_\theta} +
		{m}\,{u_\rho}}\\ 
	\pd{z}{u_\rho} - \pd{\rho}{u_z}
	\end{bmatrix}~, \\
	\etav_{2, \, m} \comp \curltm \uv&=
	\begin{bmatrix}
	-\dfrac{m}{\rho}u_z - \pd{z}{u_\theta}\\
	\dfrac{1}{\rho}\parT{u_\theta + {m}\,{u_\rho}} +
	\pd{\rho}u_\theta\\ 
	\dfrac{1}{\rho}\parT{m\pd{z}{u_\rho} - m\pd{\rho}{u_z} +
		\dfrac{m}{\rho}u_z + \pd{z}{u_\theta}} 
	\end{bmatrix} \\
	&= \begin{bmatrix}
	-\dfrac{m}{\rho}u_z - \pd{z}{u_\theta}\\
	\dfrac{1}{\rho}\parT{u_\theta + {m}\,{u_\rho}} +
	\pd{\rho}u_\theta\\ 
	- \pd{\rho}\parT{\dfrac{m}{\rho}u_z} +
	\pd{z}\parT{\dfrac{1}{\rho}\parT{m\,u_\rho + u_\theta}} 
	\end{bmatrix},
	\end{aligned}
	\end{equation*}
	which is equal to the right-hand side $\mathbf{C} \comp
	\etav_{1, \, m} \uv$ which belongs to $\CaSpB^{1*} \times
	\CaSpB^2$.\\ 
	Finally, for the divergence part, we have that $\parT{\eta_{3,
			\, m} \comp \divergtm} \uv = \parT{D \comp \etav_{2, \,
			m}} \uv~, \uv \in \widetilde{\Zvsp}^{m, \, 2}$. In fact,
	in the left-hand side we have  that
	\begin{equation*}
	%	\label{eq:eta3_div}
	\begin{aligned}
	\eta_{3, \, m} \comp \divergtm \uv &= \divergtm \uv = 
	\pd{\rho} u_\rho + \pd{z} u_z -
	\dfrac{1}{\rho}\parT{m\,u_\theta - u_\rho} 
	\end{aligned}
	\end{equation*}
	which belongs to $\CaSpB^2$ and is clearly equal to the right-hand side, since 
	\begin{equation*}
	\etav_{2, \, m} \uv =\parT{u_\rho, \, u_z, \,
		\dfrac{1}{\rho} \parT{m\, u_{\theta} - u_\rho}}^T~.  
	\end{equation*}
\end{proof}
We have an analogous result for also for the bottom part of the
diagram~\eqref{eq:gen_strategy}: 
\begin{lemma}
	\label{lemma:comm_bot}
	The following diagram commutes:
	\begin{equation}
	\begin{split}
	\tikzsetnextfilename{de_rham/de_rham_3d_cyl_bot}
	\begin{tikzpicture}[commutative diagrams/every diagram,
	arr/.style={commutative diagrams/.cd,
		every arrow,
		every label}]
	\def\minW{25mm}
	\def\minH{18mm}
	\def\hShift{10mm}
	\def\vShiftA{20mm}
	\def\vShiftB{4mm}
	\def\auxA{1mm}
	\def\auxB{4mm}
	%
	% Nodes
	\node (X0) [] at (0, 0) {\phantom{$\CySpT^{m, \, 0}$}};
	\node (X1) [xshift=1.1*\minW] at (X0) {\phantom{$\CySpT^{m, \, 1}$}};
	\node (X2) [xshift=1.1*\minW] at (X1) {\phantom{$\CySpT^{m, \, 2}$}};
	\node (X3) [xshift=1.1*\minW] at (X2) {\phantom{$\CySpT^{m, \, 3}$}};
	\node (Y0h) [yshift=-0*\minH] at (X0) {$\CaSpB_h^{0}$};
	\node (Y1h) [yshift=-0*\minH] at (X1) {$\CaSpB_h^{1} \times \CaSpB_h^{0}$};
	\node (Y2h) [yshift=-0*\minH] at (X2) {$\CaSpB_h^{1*} \times \CaSpB_h^{2}$};
	\node (Y3h) [yshift=-0*\minH] at (X3) {$\CaSpB_h^{2}$};
	\node (X0h) [yshift=-1*\minH] at (X0) {$\CySpT_h^{m, \, 0}$};
	\node (X1h) [yshift=-1*\minH] at (X1) {$\CySpT_h^{m, \, 1}$};
	\node (X2h) [yshift=-1*\minH] at (X2) {$\CySpT_h^{m, \, 2}$};
	\node (X3h) [yshift=-1*\minH] at (X3) {$\CySpT_h^{m, \, 3}$};
	% Arrows
	\path[draw, arr] (Y0h) -- (Y1h) node [midway] {$\vec{G}$};
	\path[draw, arr] (Y1h) -- (Y2h) node [midway] {$\vec{C}$};
	\path[draw, arr] (Y2h) -- (Y3h) node [midway] {$D$};
	\path[draw, arr] (X0h) -- (X1h) node [midway] {$\gradtm$};
	\path[draw, arr] (X1h) -- (X2h) node [midway] {$\curltm$};
	\path[draw, arr] (X2h) -- (X3h) node [midway] {$\divergtm$};
	\path[draw, arr] (Y0h) -- (X0h) node [midway, anchor=west, yshift=0.8mm] {$\eta_{m, \,0}^{-1}$};
	\path[draw, arr] (Y1h) -- (X1h) node [midway, anchor=west, yshift=0.8mm] {$\eta_{m, \,1}^{-1}$};
	\path[draw, arr] (Y2h) -- (X2h) node [midway, anchor=west, yshift=0.8mm] {$\eta_{m, \,2}^{-1}$};
	\path[draw, arr] (Y3h) -- (X3h) node [midway, anchor=west, yshift=0.8mm] {$\eta_{m, \,3}^{-1}$};
	\end{tikzpicture}
	\end{split}
	\end{equation}
%	\begin{equation*}
%	\includegraphics{img/de_rham/cyl_m_diag_general_bot}
%	\end{equation*}
\end{lemma}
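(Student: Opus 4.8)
The plan is to derive the bottom diagram as a consequence of Lemma~\ref{lemma:comm_top}, exploiting that, restricted to the finite-dimensional subspaces, the vertical maps $\eta_{m, \, k}$ are bijections. First note that, by the definition~\eqref{eq:cyl_phys_disc_sps}, $\eta_{m, \, k}$ restricts to a bijection of $\CySpT_h^{m, \, k}$ onto $\CaSpB_h^0$, $\CaSpB_h^1 \times \CaSpB_h^0$, $\CaSpB_h^{1*} \times \CaSpB_h^2$, $\CaSpB_h^2$ (for $k=0,1,2,3$ respectively), with inverse $\eta_{m, \, k}^{-1}$. In particular $\eta_{m, \, k}(\CySpT_h^{m, \, k})$ is contained in the corresponding continuous Cartesian space, and since the discrete cylindrical spaces are conforming (proved above) we have $\CySpT_h^{m, \, k} \subset \CySpTT^{m, \, k}$, so that Lemma~\ref{lemma:comm_top} applies to every discrete function.

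The only genuine content is then to check that the Cartesian operators map the discrete spaces into one another: $\vec{G}(\CaSpB_h^0) \subset \CaSpB_h^1 \times \CaSpB_h^0$, $\vec{C}(\CaSpB_h^1 \times \CaSpB_h^0) \subset \CaSpB_h^{1*} \times \CaSpB_h^2$, $D(\CaSpB_h^{1*} \times \CaSpB_h^2) \subset \CaSpB_h^2$. Reading their explicit form off the proof of Lemma~\ref{lemma:comm_top}, one has $\vec{G}\,\ut = (\gradt \ut, \, -\ut)$, $D(\uvt_{\rho z}, \, \uvt_\theta) = \divergt \uvt_{\rho z} - \uvt_\theta$, while $\vec{C}(\uvt_{\rho z}, \, \uvt_\theta)$ has meridian component $\mat{P}\,\uvt_{\rho z} + \rott \uvt_\theta$ (up to an overall sign) and $\theta$-component $-\curltsc \uvt_{\rho z}$. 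The first and third inclusions are immediate from the assumption that~\eqref{eq:2D_curl_comp} and~\eqref{eq:2D_div_comp} are genuine discrete subcomplexes, i.e. $\gradt$, $\curltsc$, $\divergt$ and $\rott$ send discrete spaces into discrete spaces. For $\vec{C}$ one needs, besides $\rott(\CaSpB_h^0) \subset \CaSpB_h^{1*}$ and $\curltsc(\CaSpB_h^1) \subset \CaSpB_h^2$, the identification $\CaSpB_h^{1*} = \mat{P}\,\CaSpB_h^1$ (which reflects that $\mat{P}$ intertwines $\curltsc$ with $\divergt$ and is respected by the discrete construction), so that $\mat{P}\,\uvt_{\rho z} \in \CaSpB_h^{1*}$. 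A consequence is that $\gradtm$, $\curltm$, $\divergtm$ indeed map $\CySpT_h^{m, \, k}$ into $\CySpT_h^{m, \, k+1}$, so the bottom row of~\eqref{eq:gen_strategy} is a well-defined complex.

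Given these two facts, commutativity follows square by square. For $u_h \in \CySpT_h^{m, \, 0}$ with $\ut_h = \eta_{m, \, 0}(u_h)$, Lemma~\ref{lemma:comm_top} gives $\eta_{m, \, 1}(\gradtm u_h) = \vec{G}(\eta_{m, \, 0} u_h) = \vec{G}\,\ut_h$; since $\vec{G}\,\ut_h \in \CaSpB_h^1 \times \CaSpB_h^0 = \eta_{m, \, 1}(\CySpT_h^{m, \, 1})$ we may apply $\eta_{m, \, 1}^{-1}$ and obtain $\gradtm(\eta_{m, \, 0}^{-1} \ut_h) = \eta_{m, \, 1}^{-1}(\vec{G}\,\ut_h)$, which is the commutativity of the leftmost square (the same relation is encoded in the composite operator $\gradtsm$ of~\eqref{eq:gradstar}). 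The middle square ($\curltm$, $\vec{C}$, cf.~\eqref{eq:curlstar}) and the right square ($\divergtm$, $D$, cf.~\eqref{eq:divstar}) are treated identically from the corresponding parts of Lemma~\ref{lemma:comm_top}. I expect the main obstacle to be the range check of the second paragraph, and specifically the meridian component of $\vec{C}$: keeping $\mat{P}\,\uvt_{\rho z} + \rott \uvt_\theta$ inside $\CaSpB_h^{1*}$ is precisely where the rotation identity $\CaSpB_h^{1*} = \mat{P}\,\CaSpB_h^1$ is used, whereas the $\theta$-components and the operators $\vec{G}$, $D$ only require that each Cartesian discrete sequence be closed under its own differential operator.
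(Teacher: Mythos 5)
Your proposal is correct and follows essentially the same route as the paper, which simply declares this lemma ``analogous'' to Lemma~\ref{lemma:comm_top}: the commuting identities on the discrete spaces are exactly the identities of Lemma~\ref{lemma:comm_top} conjugated by the (bijective) restrictions of $\eta_{m,\,k}$. Your explicit verification that $\vec{G}$, $\vec{C}$ and $D$ map the discrete Cartesian spaces into one another --- in particular the use of $\CaSpB_h^{1*}=\mat{P}\,\CaSpB_h^{1}$ for the meridian part of $\vec{C}$ --- is a point the paper leaves implicit, and you handle it correctly.
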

\begin{proof}
	The proof is analogous to the one of Lemma
	\ref{lemma:comm_top}. 
\end{proof}
The next step is to prove that the middle part of the
diagram~\eqref{eq:gen_strategy}, formed by the spaces in Cartesian
coordinates and the corresponding projectors, commutes. 
\begin{lemma}
	\label{lemma:comm_mid}
	The following diagram commutes:
	\begin{equation*}
	\begin{split}
	\tikzsetnextfilename{de_rham/de_rham_3d_cyl_mid}
	\begin{tikzpicture}[commutative diagrams/every diagram,
	arr/.style={commutative diagrams/.cd,
		every arrow,
		every label}]
	\def\minW{25mm}
	\def\minH{18mm}
	\def\hShift{10mm}
	\def\vShiftA{20mm}
	\def\vShiftB{4mm}
	\def\auxA{1mm}
	\def\auxB{4mm}
	%
	% Nodes
	\node (X0) [] at (0, 0) {\phantom{$\CySpT^{m, \, 0}$}};
	\node (X1) [xshift=1.1*\minW] at (X0) {\phantom{$\CySpT^{m, \, 1}$}};
	\node (X2) [xshift=1.1*\minW] at (X1) {\phantom{$\CySpT^{m, \, 2}$}};
	\node (X3) [xshift=1.1*\minW] at (X2) {\phantom{$\CySpT^{m, \, 3}$}};
	\node (Y0) [yshift=-0*\minH] at (X0) {$\CaSpB^{0}$};
	\node (Y1) [yshift=-0*\minH] at (X1) {$\CaSpB^{1} \times \CaSpB^{0}$};
	\node (Y2) [yshift=-0*\minH] at (X2) {$\CaSpB^{1*} \times \CaSpB^{2}$};
	\node (Y3) [yshift=-0*\minH] at (X3) {$\CaSpB^{2}$};
	\node (Y0h) [yshift=-1*\minH] at (X0) {$\CaSpB_h^{0}$};
	\node (Y1h) [yshift=-1*\minH] at (X1) {$\CaSpB_h^{1} \times \CaSpB_h^{0}$};
	\node (Y2h) [yshift=-1*\minH] at (X2) {$\CaSpB_h^{1*} \times \CaSpB_h^{2}$};
	\node (Y3h) [yshift=-1*\minH] at (X3) {$\CaSpB_h^{2}$};
	% Arrows
	\path[draw, arr] (Y0) -- (Y1) node [midway] {$\vec{G}$};
	\path[draw, arr] (Y1) -- (Y2) node [midway] {$\vec{C}$};
	\path[draw, arr] (Y2) -- (Y3) node [midway] {$D$};
	\path[draw, arr] (Y0h) -- (Y1h) node [midway] {$\vec{G}$};
	\path[draw, arr] (Y1h) -- (Y2h) node [midway] {$\vec{C}$};
	\path[draw, arr] (Y2h) -- (Y3h) node [midway] {$D$};
	\path[draw, arr] (Y0) -- (Y0h) node [midway, anchor=west, yshift=0.8mm] {$\Pi^0$};
	\path[draw, arr] (Y1) -- (Y1h) node [midway, anchor=west, yshift=0.8mm] {$\Pi^1 \times \Pi^0$};
	\path[draw, arr] (Y2) -- (Y2h) node [midway, anchor=west, yshift=0.8mm] {$\Pi^{1*} \times \Pi^2$};
	\path[draw, arr] (Y3) -- (Y3h) node [midway, anchor=west, yshift=0.8mm] {$\Pi^2$};
	\end{tikzpicture}
	\end{split}
	\end{equation*}
%	\begin{equation*}
%	\includegraphics{img/de_rham/cyl_m_diag_general_mid}
%	\end{equation*}
\end{lemma}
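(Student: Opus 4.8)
The plan is to reduce the three square identities expressed by the diagram, namely
$\parT{\Pi^1\times\Pi^0}\comp\vec{G}=\vec{G}\comp\Pi^0$,
$\parT{\Pi^{1*}\times\Pi^2}\comp\vec{C}=\vec{C}\comp\parT{\Pi^1\times\Pi^0}$ and
$\Pi^2\comp D=D\comp\parT{\Pi^{1*}\times\Pi^2}$,
to the commutation relations \eqref{eq:2D_curl_comp}--\eqref{eq:2D_div_comp} already assumed for the two planar complexes. The first step is to make $\vec{G}$, $\vec{C}$, $D$ explicit as maps between the Cartesian spaces: by their definition in \eqref{eq:gen_strategy} and Lemma~\ref{lemma:comm_top} they are obtained by composing the operators $\gradtsm$, $\curltsm$, $\divergtsm$ of \eqref{eq:gradstar}--\eqref{eq:divstar} with $\eta_{m,\,1}$, $\eta_{m,\,2}$, $\eta_{m,\,3}$, and simplifying the factors $\tfrac{1}{\rho}$, $\tfrac{m}{\rho}$ one finds, after a short computation,
\[
\vec{G}(\ut)=\parT{\gradt\ut,\,-\ut},\qquad
\vec{C}(\uvt_{\rho z},\ut_\theta)=\parT{-\mat{P}\,\uvt_{\rho z}-\rott\ut_\theta,\;-\curltsc\uvt_{\rho z}},\qquad
D(\uvt_{\rho z},\ut_\theta)=\divergt\uvt_{\rho z}-\ut_\theta .
\]
The key point is that each of $\vec{G}$, $\vec{C}$, $D$ is assembled, component by component, from the planar differential operators $\gradt$, $\rott$, $\curltsc$, $\divergt$ appearing in \eqref{eq:2D_curl_comp}--\eqref{eq:2D_div_comp} together with purely algebraic (zeroth order) maps: multiplication by $-1$ and the rotation $\mat{P}$.

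Since all the projectors are linear, the three squares then decouple into componentwise identities. The $\vec{G}$-square holds because its $\CaSpB^1$-component is exactly $\Pi^1\comp\gradt=\gradt\comp\Pi^0$ from \eqref{eq:2D_curl_comp}, while its $\CaSpB^0$-component equals $-\Pi^0\ut$ on both sides. The $D$-square holds because its only non-trivial component is $\Pi^2\comp\divergt=\divergt\comp\Pi^{1*}$ from \eqref{eq:2D_div_comp}. For the $\vec{C}$-square, the $\CaSpB^2$-component is $\Pi^2\comp\curltsc=\curltsc\comp\Pi^1$, again from \eqref{eq:2D_curl_comp}, and the $\CaSpB^{1*}$-component splits, by linearity of $\Pi^{1*}$, into the contribution $\Pi^{1*}\comp\rott=\rott\comp\Pi^0$ of \eqref{eq:2D_div_comp} and the contribution of the algebraic term $-\mat{P}\,\uvt_{\rho z}$, which requires
\[
\Pi^{1*}\comp\mat{P}=\mat{P}\comp\Pi^1 .
\]

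This last identity is the one ingredient not literally contained in the stated hypotheses, and I expect it to be the (mild) crux of the proof. It records that the $\Hsp(\curltsc;\,\srfdomflat)$-conforming pair $\parT{\CaSpB_h^1,\Pi^1}$ and the $\Hsp(\divergt;\,\srfdomflat)$-conforming pair $\parT{\CaSpB_h^{1*},\Pi^{1*}}$ are images of one another under the $90^\circ$ rotation $\mat{P}$, which is precisely the isomorphism carrying the planar ``curl'' de Rham complex \eqref{eq:2D_curl_comp} onto the ``div'' one \eqref{eq:2D_div_comp} (indeed $\rott=\mat{P}\comp\gradt$ and $\divergt\comp\mat{P}=\curltsc$). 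For the discrete spaces considered here this is true by construction, since \eqref{eq:2D_div_comp} is obtained from \eqref{eq:2D_curl_comp} by applying $\mat{P}$ at the level both of the spaces and of the commuting projectors; this will be transparent from the explicit spline spaces introduced in Section~\ref{sec:iga}, and it is equally true of the lowest-order N\'ed\'elec / Raviart--Thomas pair used in \cite{oh:2015}. Granting this, all three squares close, which proves the lemma; combined with Lemmas~\ref{lemma:comm_top} and~\ref{lemma:comm_bot} it yields, by pasting, the commutativity of the whole diagram~\eqref{eq:gen_strategy}, hence of the target diagram~\eqref{eq:cyl_sequence}.
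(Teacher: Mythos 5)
Your proposal is correct and follows essentially the same route as the paper: both reduce the three squares to the componentwise identities supplied by the two planar commuting diagrams, and both isolate the single extra ingredient $\Pi^{1*}\comp\mat{P}=\mat{P}\comp\Pi^1$, justified by the fact that the div-complex $\parT{\CaSpB^{1*},\CaSpB_h^{1*},\Pi^{1*}}$ is the image of the curl-complex $\parT{\CaSpB^{1},\CaSpB_h^{1},\Pi^{1}}$ under the rotation $\mat{P}$. If anything, you are slightly more careful than the paper's one-line justification, since you note explicitly that the projectors, and not only the spaces, must correspond under $\mat{P}$.
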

\begin{proof}
	For the gradient and divergence part the result is an
	immediate consequence of the commutativity of the standard two-dimensional diagrams in Cartesian coordinates \eqref{eq:2D_curl_comp} and \eqref{eq:2D_div_comp}. 
	For the curl part, we need to show that $\mathbf{C}
	\comp \parT{\Piv^1 \times \Pi^0} = \parT{\Piv^{1*} \times
		\Pi^2} \comp \mathbf{C}$. 
	For the rotor and the curl the result is immediate:
	\begin{equation*}
	\begin{aligned}
	\rott \parT{\Pi^0 \ut_\theta} & = \Piv^{1*}\parT{\rott \ut_\theta}~, \\
	\curltsc \parT{ \Piv^1{\uvt_{\rho z}}} & = \Pi^2 \parT{
		\curltsc \uvt_{\rho z}}~. 
	\end{aligned}
	\end{equation*}
	Concerning the other components we have that
	\begin{equation*}
	\begin{aligned}
	\mat{P} \Piv^1 \parT{\uvt_{\rho z}} = \Piv^{1*} \parT{ \mat{P}
		\uvt_{\rho z}} 
	\end{aligned}
	\end{equation*}
	since $\CaSpB^{1*} = \mat{P} \CaSpB^1$ and $\CaSpB_h^{1*} =
	\mat{P} \CaSpB_h^1$. 
\end{proof}
Finally, Lemmas~\ref{lemma:comm_top}--\ref{lemma:comm_mid} lead to
the following result: 
\begin{lemma}
	The following diagram commutes:
	\begin{equation*}
	\begin{split}
	\tikzsetnextfilename{de_rham/de_rham_3d_cyl_gen_strat_tilde}
	\begin{tikzpicture}[commutative diagrams/every diagram,
	arr/.style={commutative diagrams/.cd,
		every arrow,
		every label}]
	\def\minW{25mm}
	\def\minH{18mm}
	\def\hShift{10mm}
	\def\vShiftA{20mm}
	\def\vShiftB{4mm}
	\def\auxA{1mm}
	\def\auxB{4mm}
	%
	% Nodes
	\node (X0) [] at (0, 0) {$\CySpTT^{m, \, 0}$};
	\node (X1) [xshift=1.1*\minW] at (X0) {$\CySpTT^{m, \, 1}$};
	\node (X2) [xshift=1.1*\minW] at (X1) {$\CySpTT^{m, \, 2}$};
	\node (X3) [xshift=1.1*\minW] at (X2) {$\CySpTT^{m, \, 3}$};
	\node (Y0) [yshift=-1*\minH] at (X0) {$\CaSpB^{0}$};
	\node (Y1) [yshift=-1*\minH] at (X1) {$\CaSpB^{1} \times \CaSpB^{0}$};
	\node (Y2) [yshift=-1*\minH] at (X2) {$\CaSpB^{1*} \times \CaSpB^{2}$};
	\node (Y3) [yshift=-1*\minH] at (X3) {$\CaSpB^{2}$};
	\node (Y0h) [yshift=-2*\minH] at (X0) {$\CaSpB_h^{0}$};
	\node (Y1h) [yshift=-2*\minH] at (X1) {$\CaSpB_h^{1} \times \CaSpB_h^{0}$};
	\node (Y2h) [yshift=-2*\minH] at (X2) {$\CaSpB_h^{1*} \times \CaSpB_h^{2}$};
	\node (Y3h) [yshift=-2*\minH] at (X3) {$\CaSpB_h^{2}$};
	\node (X0h) [yshift=-3*\minH] at (X0) {$\CySpT_h^{m, \, 0}$};
	\node (X1h) [yshift=-3*\minH] at (X1) {$\CySpT_h^{m, \, 1}$};
	\node (X2h) [yshift=-3*\minH] at (X2) {$\CySpT_h^{m, \, 2}$};
	\node (X3h) [yshift=-3*\minH] at (X3) {$\CySpT_h^{m, \, 3}$};
	% Arrows
	\path[draw, arr] (X0) -- (X1) node [midway] {$\gradtm$};
	\path[draw, arr] (X1) -- (X2) node [midway] {$\curltm$};
	\path[draw, arr] (X2) -- (X3) node [midway] {$\divergtm$};
	\path[draw, arr] (Y0) -- (Y1) node [midway] {$\vec{G}$};
	\path[draw, arr] (Y1) -- (Y2) node [midway] {$\vec{C}$};
	\path[draw, arr] (Y2) -- (Y3) node [midway] {$D$};
	\path[draw, arr] (Y0h) -- (Y1h) node [midway] {$\vec{G}$};
	\path[draw, arr] (Y1h) -- (Y2h) node [midway] {$\vec{C}$};
	\path[draw, arr] (Y2h) -- (Y3h) node [midway] {$D$};
	\path[draw, arr] (X0h) -- (X1h) node [midway] {$\gradtm$};
	\path[draw, arr] (X1h) -- (X2h) node [midway] {$\curltm$};
	\path[draw, arr] (X2h) -- (X3h) node [midway] {$\divergtm$};
	\path[draw, arr] (X0) -- (Y0) node [midway, anchor=west, yshift=0.8mm] {$\eta_{m, \,0}$};
	\path[draw, arr] (X1) -- (Y1) node [midway, anchor=west, yshift=0.8mm] {$\eta_{m, \,1}$};
	\path[draw, arr] (X2) -- (Y2) node [midway, anchor=west, yshift=0.8mm] {$\eta_{m, \,2}$};
	\path[draw, arr] (X3) -- (Y3) node [midway, anchor=west, yshift=0.8mm] {$\eta_{m, \,3}$};
	\path[draw, arr] (Y0) -- (Y0h) node [midway, anchor=west, yshift=0.8mm] {$\Pi^0$};
	\path[draw, arr] (Y1) -- (Y1h) node [midway, anchor=west, yshift=0.8mm] {$\Pi^1 \times \Pi^0$};
	\path[draw, arr] (Y2) -- (Y2h) node [midway, anchor=west, yshift=0.8mm] {$\Pi^{1*} \times \Pi^2$};
	\path[draw, arr] (Y3) -- (Y3h) node [midway, anchor=west, yshift=0.8mm] {$\Pi^2$};
	\path[draw, arr] (Y0h) -- (X0h) node [midway, anchor=west, yshift=0.8mm] {$\eta_{m, \,0}^{-1}$};
	\path[draw, arr] (Y1h) -- (X1h) node [midway, anchor=west, yshift=0.8mm] {$\eta_{m, \,1}^{-1}$};
	\path[draw, arr] (Y2h) -- (X2h) node [midway, anchor=west, yshift=0.8mm] {$\eta_{m, \,2}^{-1}$};
	\path[draw, arr] (Y3h) -- (X3h) node [midway, anchor=west, yshift=0.8mm] {$\eta_{m, \,3}^{-1}$};
	\end{tikzpicture}
	\end{split}
	\end{equation*}
%	\begin{equation*}
%	\includegraphics{img/de_rham/cyl_m_diag_general_tilde}
%	\end{equation*}
\end{lemma}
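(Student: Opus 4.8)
The plan is to deduce the commutativity of the large three-row diagram by \emph{vertically pasting} the three slabs of~\eqref{eq:gen_strategy} whose commutativity has already been established, so that no new computation is required. In other words, I would treat the statement purely as a formal consequence of Lemmas~\ref{lemma:comm_top}, \ref{lemma:comm_mid} and~\ref{lemma:comm_bot}.

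First I would record that the vertical composite of the big diagram reproduces exactly the projectors of~\eqref{eq:cyl_interp}. Writing $\Pi^{(0)} = \Pi^0$, $\Pi^{(1)} = \Pi^1 \times \Pi^0$, $\Pi^{(2)} = \Pi^{1*} \times \Pi^2$ and $\Pi^{(3)} = \Pi^2$ for the product projectors on the Cartesian spaces, the composite running from $\CySpTT^{m, \, k}$ down through $\CaSpB$ and $\CaSpB_h$ to $\CySpT_h^{m, \, k}$ equals, by the very definition~\eqref{eq:cyl_interp}, $\eta_{m, \, k}^{-1} \comp \Pi^{(k)} \comp \eta_{m, \, k} = \Pibr^k$. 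Hence the assertion reduces to the commutativity of the outer rectangle of the diagram: the top row $\CySpTT^{m, \, 0}, \dots, \CySpTT^{m, \, 3}$ and the bottom row $\CySpT_h^{m, \, 0}, \dots, \CySpT_h^{m, \, 3}$, both equipped with $\gradtm, \curltm, \divergtm$, linked by the vertical maps $\Pibr^0, \dots, \Pibr^3$.

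To prove that rectangle commutes I would split it into its three horizontal slabs. The top slab, with verticals $\eta_{m, \, k}$ and bottom operators $\vec{G}, \vec{C}, D$, commutes by Lemma~\ref{lemma:comm_top}; the middle slab, with verticals $\Pi^{(k)}$, commutes by Lemma~\ref{lemma:comm_mid}; the bottom slab, with verticals $\eta_{m, \, k}^{-1}$, commutes by Lemma~\ref{lemma:comm_bot}. Since the vertical pasting of commutative squares is again a commutative square, the outer rectangle commutes, which is precisely the claim. Concretely, for each $k$ one pushes a differential operator through the top slab into the $\vec{G}/\vec{C}/D$ row, then across the middle slab (projector commutation in Cartesian coordinates), then back up through the bottom slab, each of the three moves being licensed by the corresponding lemma.

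Finally I would check the bookkeeping that makes this composition meaningful, i.e.\ that at each stage the image of one arrow lies in the domain of the next. This is already contained in the three lemmas and in the hypotheses on the Cartesian spaces: Lemma~\ref{lemma:comm_top} asserts, simultaneously with commutativity, that $\gradtm, \curltm, \divergtm$ carry elements of $\CySpTT^{m, \, k}$ to functions whose image under $\eta_{m, \, k+1}$ lies in $\CaSpB^1 \times \CaSpB^0$, $\CaSpB^{1*} \times \CaSpB^2$, $\CaSpB^2$ respectively; the conformity $\CaSpB_h^{\bullet} \subset \CaSpB^{\bullet}$ used in~\eqref{eq:2D_curl_comp}--\eqref{eq:2D_div_comp} allows the middle slab to be composed; and $\eta_{m, \, k}^{-1}$ applied to discrete Cartesian functions returns elements of $\CySpT_h^{m, \, k}$, which are conforming in $\CySpT^{m, \, k}$ by the conformity lemma proved at the start of this section, so that $\gradtm, \curltm, \divergtm$ may legitimately be applied. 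I do not expect any step to be a genuine obstacle; the only mild care needed is keeping track of which functions lie in the regularity subspaces $\CySpTT^{m, \, k}$ as opposed to the full spaces $\CySpT^{m, \, k}$, and this is already settled by the explicit formulas~\eqref{eq:gradstar}--\eqref{eq:divstar} together with Lemma~\ref{lemma:comm_top}.
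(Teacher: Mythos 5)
Your proposal is correct and takes essentially the same route as the paper: the paper's proof likewise obtains the result by vertically composing Lemmas~\ref{lemma:comm_top}, \ref{lemma:comm_mid} and~\ref{lemma:comm_bot}, illustrating the pasting with the explicit chain of equalities for the curl square, exactly as in your third paragraph. The additional domain bookkeeping you include is consistent with what the paper leaves implicit and does not change the argument.
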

\begin{proof}
	The proof is a direct consequence of
	Lemmas~\ref{lemma:comm_top}--\ref{lemma:comm_mid}.  
	For example, we have that
	\begin{align*}
	\Pivbr^2 \parT{\curltm \uv} & = \parT{\etav_{2, \, m}^{-1}
		\comp \parT{\Piv^{1*} \times \Pi^2}\comp \etav_{2, \, m}
		\comp \curltm} \uv &&\nonumber\\ 
	& = \parT{\etav_{2, \, m}^{-1} \comp \parT{\Piv^{1*} \times
			\Pi^2}\comp \mathbf{C} \comp \etav_{1, \, m} } \uv  &&
	\text{Lemma \ref{lemma:comm_top}}\nonumber\\ 
	& = \parT{\etav_{2, \, m}^{-1} \comp \mathbf{C}
		\comp \parT{\Piv^1 \times \Pi^0}\comp  \etav_{1, \, m} } \uv
	&& \text{Lemma \ref{lemma:comm_mid}}\nonumber\\ 
	& = \parT{\curltm \comp \etav_{1, \, m}^{-1}
		\comp \parT{\Piv^1 \times \Pi^0}\comp  \etav_{1, \, m} } \uv
	&& \text{Lemma \ref{lemma:comm_bot}}\nonumber\\ 
	& = \curltm \parT{\Pivbr^1 \uv} &&
	\end{align*}
\end{proof}
Another important property that can be deduced from the spaces in
Cartesian coordinates is the exactness of the discrete sequence: 
\begin{theorem}
	The discrete sequence
	\begin{equation*}
	\begin{split}
	\tikzsetnextfilename{de_rham/de_rham_3d_cyl_discr}
	\begin{tikzpicture}[commutative diagrams/every diagram,
	arr/.style={commutative diagrams/.cd,
		every arrow,
		every label}]
	\def\minW{25mm}
	\def\minH{20mm}
	\def\hShift{10mm}
	\def\vShiftA{20mm}
	\def\vShiftB{4mm}
	\def\auxA{1mm}
	\def\auxB{4mm}
	%
	% Nodes
	\node (X0) [] at (0, 0) {$\CySpT_h^{m, \, 0}$};
	\node (X1) [xshift=1.1*\minW] at (X0) {$\CySpT_h^{m, \, 1}$};
	\node (X2) [xshift=1.1*\minW] at (X1) {$\CySpT_h^{m, \, 2}$};
	\node (X3) [xshift=1.1*\minW] at (X2) {$\CySpT_h^{m, \, 3}$};
	\node (R) [xshift=-0.6*\minW] at (X0) {$0$};
	\node (Z) [xshift=0.6*\minW] at (X3) {$0$};
	% Arrows
	\path[draw, arr] (X0) -- (X1) node [midway] {$\gradtm$};
	\path[draw, arr] (X1) -- (X2) node [midway] {$\curltm$};
	\path[draw, arr] (X2) -- (X3) node [midway] {$\divergtm$};
	\path[draw, arr] (R) -- (X0);
	\path[draw, arr] (X3) -- (Z);
	\end{tikzpicture}
	\end{split}
	\end{equation*}
%	\begin{equation*}
%	\begin{split}
%	\includegraphics{img/de_rham/cyl_m_discr}
%	\end{split}
%	\end{equation*}
	is exact.
\end{theorem}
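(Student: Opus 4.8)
The plan is to transport the exactness of the two Cartesian discrete complexes \eqref{eq:2D_curl_comp}--\eqref{eq:2D_div_comp} to the cylindrical setting through the maps $\eta_{m,\,k}$. By the very definition \eqref{eq:cyl_phys_disc_sps}, each $\eta_{m,\,k}^{-1}$ restricts to a linear bijection from the corresponding Cartesian discrete space --- namely $\CaSpB_h^0$, $\CaSpB_h^1\times\CaSpB_h^0$, $\CaSpB_h^{1*}\times\CaSpB_h^2$, $\CaSpB_h^2$ --- onto $\CySpT_h^{m,\,k}$: surjectivity is built into the definition, and injectivity holds since $\eta_{m,\,k}$ acts as multiplication by a matrix that is invertible for $\rho>0$. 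By Lemma~\ref{lemma:comm_bot} these bijections intertwine $\vec{G},\vec{C},D$ with $\gradtm,\curltm,\divergtm$. Since exactness of a complex is invariant under an isomorphism of complexes, it suffices to prove that the ``middle'' Cartesian discrete sequence
\[
0 \longrightarrow \CaSpB_h^0 \xrightarrow{\vec{G}} \CaSpB_h^1\times\CaSpB_h^0 \xrightarrow{\vec{C}} \CaSpB_h^{1*}\times\CaSpB_h^2 \xrightarrow{D} \CaSpB_h^2 \longrightarrow 0
\]
is exact.

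From the computations in the proofs of Lemmas~\ref{lemma:comm_top} and~\ref{lemma:comm_mid}, writing the elements of $\CaSpB_h^1\times\CaSpB_h^0$ and of $\CaSpB_h^{1*}\times\CaSpB_h^2$ as pairs $(\vv,w)$ with $\vv$ the meridian component and $w$ the $\theta$-component, the operators act as $\vec{G}\tilde u=(\gradt\tilde u,\,-\tilde u)$, $\vec{C}(\vv,w)=\bigl(\mat{P}^{-1}(\vv+\gradt w),\,-\curltsc\vv\bigr)$ and $D(\vv,s)=\divergt\vv-s$. The features that make the argument work are that $\vec{G}$ returns its argument, up to sign, in the $\theta$-slot, that $D$ returns its argument, up to sign, in the $\CaSpB_h^2$-slot, and that $\mat{P}$ maps $\CaSpB_h^1$ isomorphically onto $\CaSpB_h^{1*}$ with $\mat{P}^{-1}=-\mat{P}$; together with the pointwise identities $\curltsc\gradt=0$, $\divergt\mat{P}^{-1}=-\curltsc$ and $\curltsc\mat{P}=-\divergt$ they reduce the proof to a short diagram chase.

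Concretely, I would check the four exactness conditions in turn. Injectivity of $\vec{G}$: $\vec{G}\tilde u=0$ forces $\tilde u=0$ through the $\theta$-slot --- this is why the cylindrical sequence starts from $0$ and not, as \eqref{eq:2D_curl_comp}, from $\R$. Exactness at $\CaSpB_h^1\times\CaSpB_h^0$: $\vec{C}\comp\vec{G}=0$ because $\curltsc\gradt=0$, and conversely $\vec{C}(\vv,w)=0$ forces $\vv=-\gradt w$ through the first slot, so $(\vv,w)=\vec{G}(-w)$. Exactness at $\CaSpB_h^{1*}\times\CaSpB_h^2$: $D\comp\vec{C}=0$ because $\divergt\mat{P}^{-1}=-\curltsc$ and $\curltsc\gradt=0$, and conversely $D(\vv,s)=0$ forces $s=\divergt\vv$, whence $(\vv,s)=\vec{C}(\mat{P}\vv,0)$, where $\mat{P}\vv\in\CaSpB_h^1$ since $\vv\in\CaSpB_h^{1*}=\mat{P}\CaSpB_h^1$ and $\mat{P}^{2}=-\mathrm{id}$, and $-\curltsc(\mat{P}\vv)=\divergt\vv=s$ by $\curltsc\mat{P}=-\divergt$. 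Surjectivity of $D$: $D(0,-s)=s$. Reading this exactness back along the isomorphisms $\eta_{m,\,k}$ gives the exactness of the discrete cylindrical sequence.

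I do not expect a genuine obstacle. The analytical content --- exactness of the two Cartesian discrete de Rham complexes on the simply connected cross-section $\srfdomflat$ --- is available as the assumption \eqref{eq:2D_curl_comp}--\eqref{eq:2D_div_comp}; in fact the glued sequence turns out to be exact for essentially formal reasons, the only properties of \eqref{eq:2D_curl_comp}--\eqref{eq:2D_div_comp} actually used being that they are complexes and that $\mat{P}$ is an isomorphism from $\CaSpB_h^1$ onto $\CaSpB_h^{1*}$. The one point that requires attention is keeping the ordering and sign conventions of $\eta_{m,\,1}$, $\eta_{m,\,2}$ and $\vec{C}$ consistent with Lemmas~\ref{lemma:comm_top}--\ref{lemma:comm_mid}, so that the meridian and $\theta$ slots are never interchanged; this is clerical rather than conceptual.
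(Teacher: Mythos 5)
Your proposal is correct and follows essentially the same route as the paper: the paper likewise exploits that $\eta_{m,\,k}^{-1}$ identifies the cylindrical discrete spaces with the Cartesian product spaces and then constructs exactly the preimages you give ($\eta_{m,\,2}^{-1}(\vec{0},-\ut_h)$ for surjectivity of $\divergtm$, $\eta_{m,\,1}^{-1}(\mat{P}\uvt_{\rho z},0)$ for the curl step, and $\eta_{m,\,0}^{-1}(-\ut_\theta)$ for the gradient step). Your repackaging as ``exactness of the glued Cartesian sequence transported along an isomorphism of complexes'' is only a cleaner bookkeeping of the same diagram chase, and your formulas for $\vec{G}$, $\vec{C}$, $D$ agree with those implicit in Lemmas~\ref{lemma:comm_top}--\ref{lemma:comm_mid}.
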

\begin{proof}If $u_h \in \CySpT_h^{m, \, 3}$, then $\ut_h \in
	\CaSpB_h^{2}$. So the function 
	\begin{equation*}
	\vv_h = \eta_{m, \,
		2}^{-1} \parT{\parT{\vec{0}^T, \, -\ut_h}^T} \in \CySpT_h^{m, \,
		2}
	\end{equation*} is such that $\divergt^m \vv_h = u_h$. 
	Similarly, if $\uv_h \in \CySpT_h^{m, \, 2}$ is such that
	$\divergt^m \uv_h = \vec{0}$, we have that
	\begin{equation*}
	0 = \divergtm \uv_h = \divergtsm \uvt_h =
	\divergt \uvt_{h, \, \rho z} - \ut_{h, \, \theta}~, 
	\end{equation*}
	with $\ut_{h, \, \theta} \in \CaSpB_h^{2}$. Choosing  
	\begin{equation*}
	\vv_h = \eta_{m, \, 1}^{-1} \left(\begin{bmatrix}
	\mat{P} \uvt_{h, \, \rho z}\\
	0
	\end{bmatrix}\right) \in \CySpT_h^{m, \, 1}
	\end{equation*}
	we have that
	\begin{equation*}
	\begin{aligned}
	\curltm \vv_h & = \eta_{m, \, 2}^{-1}\parT{\vec{C} \vvt_h} = \eta_{m, \, 2}^{-1}\parT{\begin{bmatrix}
	- \mat{P} \mat{P} \uvt_{h, \, \rho z} \\
	- \curltsc \parT{ \mat{P}\uvt_{h, \, \rho z}}
	\end{bmatrix}}\\
& 	= \eta_{m, \, 2}^{-1}\parT{\begin{bmatrix}
	\uvt_{h, \, \rho z} \\
	\divergt {\uvt_{h, \, \rho z}}
	\end{bmatrix}}= \eta_{m, \, 2}^{-1}\parT{\begin{bmatrix}
	\uvt_{h, \, \rho z} \\
	\ut_{h, \, \theta}
	\end{bmatrix}} = \uv_h~.
	\end{aligned}
	\end{equation*}
	If $\uv_h \in \CySpT_h^{m, \, 1}$ is such that $\curlt^m \uv_h
	= \vec{0}$, it follows that $\curltsm \uvt_h =
	\vec{0}$ and so that (see~\eqref{eq:curlstar}) $-\mat{P}
	\uvt_{h, \, \rho z} = \rott \ut_{h, \, \theta} = \mat{P}
	\gradt \ut_{h, \, \theta}$. These previous relations imply
	that the third component is identically null. Moreover, we can
	also see that $\uvt_{h, \, \rho z} = -\gradt \ut_{h, \,
		\theta}$ so, choosing  $v_h = \eta_{m, \,
		0}^{-1} \parT{-\ut_{h, \, \theta}}$ we have that $\gradt^m
	v_h = \uv_h$. 
	Finally, we see that the equation corresponding to the third
	component in $\gradt^m u_h= \vec{0}$ implies that $u_h \equiv
	0$. 
\end{proof}
%The following result is proven in~\cite[Theorem 4.1]{oh:2015} in the
%context of finite elements, but since in the proof the only
%characteristic of the Cartesian spaces used is the of the
%commutativity of~\eqref{eq:2D_curl_comp} and~\eqref{eq:2D_div_comp},
%the same holds also in the case of \ac{IGA} spaces. 
%\begin{lemma}
%	The following diagram commutes:
%	\begin{equation*}
%	\begin{split}
%	\includegraphics{img/de_rham/cyl_m_diag_discr}
%	\end{split}
%	\end{equation*}
%\end{lemma}
In order to prove error estimates, let us define the following spaces: 
\begin{equation*}
\begin{aligned}
\Hsp_\rho^{s}(\widetilde{{\gradt}}^m) = &\Big\{ u \in
\Hsp(\gradtm):\, \eta_{m, \,0} (u) \in \Hsp_\rho^s,\,
\widetilde{{\gradt}}^m u \in \Hsp_\rho^s\Big\},\\ 
\Hsp_\rho^{s}(\widetilde{{\curlt}}^m) = &\Big\{ \uv \in
\Hsp(\curltm):\, \eta_{m, \,1} (\uv) \in \Hsp_\rho^s,  \,  
\widetilde{{\curlt}}^m \uv \in \Hsp_\rho^s\Big\},\\
\Hsp_\rho^{s}(\widetilde{{\divergt}}^m) = &\Big\{ \uv \in
\Hsp(\divergtm):\, \eta_{m, \,2} (\uv) \in \Hsp_\rho^s,
\,\widetilde{{\divergt}}^m \uv \in \Hsp_\rho^s\Big\}~, 
\end{aligned}
\end{equation*}
where
\begin{equation*}
\begin{aligned}
\widetilde{{\gradt}}^m u & = \eta_{m, \, 1} \parT{\gradtm u}~, \\
\widetilde{{\curlt}}^m \uv & = \eta_{m, \, 2} \parT{\curltm \uv}~, \\ 
\widetilde{{\divergt}}^m \uv & = \eta_{m, \, 3} \parT{\divergtm
	\uv} . 
\end{aligned}
\end{equation*}
The proof of the following estimates is analogous to the one in
\cite[Theorem 4.1]{oh:2015}.
\begin{lemma}
	\label{lemma:stab_cyl_proj}
	For $s\geq 0$, we have the error estimates
	\begin{align}
	\begin{split}
	& \norm{u - \Pibr^0\,u}_{\Hsp(\gradtm)} \leq C h^{s} \Big(
	\norm{\eta_{m, \, 0}\parT{u}}_{\Hsp_\rho^{2+s}} +
	\norm{\eta_{m, \, 1} \parT{\gradtm u} }_{\Hsp_\rho^{2+s}}
	\Big),\\  
	&\hspace{0.65\textwidth}{u \in
		\Hsp_\rho^{s+2}(\widetilde{{\gradt}}^m),} 
	\end{split} \label{eq:est_z0} \\
	\begin{split}
	&	\norm{\uv - \Pibr^1\,\uv}_{\Hsp(\curltm)} \leq C
	h^{s}  \Big( \norm{\eta_{m, \, 1}\parT{\uv}}_{\Hsp_\rho^{2+s}}
	+ \norm{\eta_{m, \, 2} \parT{\curltm \uv}
	}_{\Hsp_\rho^{2+s}} \Big), \\  
	&\hspace{0.65\textwidth} \uv \in
	\Hsp_\rho^{s+2}(\widetilde{{\curlt}}^m),  
	\end{split}\label{eq:est_z1} \\
	\begin{split}
	&	\norm{\uv - \Pibr^2\,\uv}_{\Hvsp(\divergtm)} \leq C
	h^{s}  \Big( \norm{\eta_{m, \, 2}\parT{\uv}}_{\Hsp_\rho^{2+s}}
	+ \norm{\eta_{m, \, 3} \parT{\divergtm \uv}
	}_{\Hsp_\rho^{2+s}} \Big), \\  
	&\hspace{0.65\textwidth} \uv \in
	\Hsp_\rho^{s+2}(\widetilde{{\divergt}}^m),  
	\end{split}\label{eq:est_z2} \\
	\begin{split}
	&	\norm{u - \Pibr^3\,u}_{\Lsp_\rho^2} \leq C h^{s}
	\norm{u}_{\Hsp_\rho^{2+s}} \hspace{0.29\textwidth} u \in
	\Hsp_\rho^{s+2}. 
	\end{split}\label{eq:est_z3}
	\end{align}
\end{lemma}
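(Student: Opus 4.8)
The plan is to mirror the argument of~\cite[Theorem 4.1]{oh:2015}: every cylindrical estimate is reduced to the already-assumed Cartesian estimates~\eqref{eq:app_est_2D_cart} by pushing the interpolation error through the operators $\eta_{m,\,k}$, using the commutativity of the full diagram~\eqref{eq:gen_strategy} together with that of the two-dimensional Cartesian diagrams~\eqref{eq:2D_curl_comp}--\eqref{eq:2D_div_comp}. I would treat~\eqref{eq:est_z0} in full and then remark that~\eqref{eq:est_z1}--\eqref{eq:est_z3} follow the same pattern, with $\mathbf{G}$ replaced by $\mathbf{C}$, $D$ or the identity and $\Pi^1\times\Pi^0$ by the appropriate product projector; note that~\eqref{eq:est_z3} is in fact immediate, since $\Pibr^3=\Pi^2$ and the last line of~\eqref{eq:app_est_2D_cart} applies directly after bounding $\norm{\cdot}_{\Lsp_\rho^2}\le R^{1/2}\norm{\cdot}_{\Lsp^2}$.

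For~\eqref{eq:est_z0}, the first move is to rewrite the error. By the definition~\eqref{eq:cyl_interp} of the projectors, $u-\Pibr^0 u=\eta_{m,\,0}^{-1}(\ut-\Pi^0\ut)$ with $\ut:=\eta_{m,\,0}(u)\in\CaSpB^0$; and, using $\gradtm\Pibr^0=\Pibr^1\gradtm$ from the commutativity of~\eqref{eq:gen_strategy}, the identity $\eta_{m,\,1}(\gradtm u)=\mathbf{G}\ut=(\gradt\ut,\,-\ut)$ supplied by Lemma~\ref{lemma:comm_top}, and the Cartesian relation $\Pi^1\gradt=\gradt\Pi^0$ from~\eqref{eq:2D_curl_comp}, I get
\[
\gradtm(u-\Pibr^0 u)=\eta_{m,\,1}^{-1}\bigl((\mathrm{I}-\Pi^1\times\Pi^0)\,\mathbf{G}\ut\bigr)=\eta_{m,\,1}^{-1}(\gradt w,\,-w),\qquad w:=\ut-\Pi^0\ut .
\]
The second move is to bound the action of $\eta_{m,\,0}^{-1}$ and $\eta_{m,\,1}^{-1}$ on the weighted norms: since $\srfdomflat$ is bounded there is $R>0$ with $0<\rho<R$ on $\srfdomflat$, and — exactly as in the proof that the spaces~\eqref{eq:cyl_phys_disc_sps} are conforming — each component of these operators is multiplication by a bounded factor (a constant or $\rho/m$), so that the weighted graph norm on the left is controlled by the \emph{unweighted} Cartesian norm $\norm{u-\Pibr^0 u}_{\Hsp(\gradtm)}\le C\,\norm{w}_{\Hsp^1(\srfdomflat)}$.

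The third move is to apply the Cartesian estimate and return to weighted spaces: the first line of~\eqref{eq:app_est_2D_cart} gives $\norm{w}_{\Hsp^1(\srfdomflat)}\le C h^{s}\norm{\ut}_{\Hsp^{s+1}(\srfdomflat)}$, and a Hardy-type inequality at $\rho=0$, yielding the continuous embedding $\Hsp_\rho^{s+2}(\srfdomflat)\hookrightarrow\Hsp^{s+1}(\srfdomflat)$, lets me replace $\norm{\ut}_{\Hsp^{s+1}}$ by $\norm{\eta_{m,\,0}(u)}_{\Hsp_\rho^{s+2}}$, which is a term on the right-hand side of~\eqref{eq:est_z0}. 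For~\eqref{eq:est_z1}--\eqref{eq:est_z2} the same three moves apply, but the operators $\mathbf{C}$ and $D$ also involve the $\theta$-component of the argument (as seen from~\eqref{eq:curlstar}--\eqref{eq:divstar}), on which one uses the $\Pi^0$-part of~\eqref{eq:app_est_2D_cart}; pushing both the $\theta$-component and the $\rho z$-part back through $\eta_{m,\,k+1}^{-1}$ produces exactly the two terms $\norm{\eta_{m,\,k}(\cdot)}_{\Hsp_\rho^{s+2}}$ and $\norm{\eta_{m,\,k+1}(\cdot)}_{\Hsp_\rho^{s+2}}$ appearing on the right.

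The routine part is the bookkeeping of composing the bounded operators $\eta_{m,\,k}^{\pm1}$ with the commuting diagrams already established in Lemmas~\ref{lemma:comm_top}--\ref{lemma:comm_mid}. The delicate ingredient — and the reason the index is $s+2$ rather than $s$ or $s+1$ — is the passage between the weighted spaces $\Hsp_\rho^{\bullet}$ and the unweighted Cartesian spaces $\Hsp^{\bullet}$ on a domain that can touch the axis $\{\rho=0\}$: the degenerate weight costs one derivative in the embedding $\Hsp_\rho^{s+2}\hookrightarrow\Hsp^{s+1}$ (via Hardy/Poincar\'e-type inequalities for these weighted spaces), on top of the one order of regularity already consumed by the $\Hsp^1$-type Cartesian projection estimates in~\eqref{eq:app_est_2D_cart}. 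Establishing that embedding carefully, uniformly over the range of $s$ of interest, is the step I expect to require the most attention.
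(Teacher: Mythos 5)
Your proposal follows essentially the same route as the paper's proof: reduce each weighted estimate to the Cartesian ones in \eqref{eq:app_est_2D_cart} by writing $u-\Pibr^k u=\eta_{m,\,k}^{-1}\parT{\eta_{m,\,k}(u)-\Pi^k\eta_{m,\,k}(u)}$, use that $0<\rho<R$ makes the $\eta_{m,\,k}^{\pm1}$ bounded multiplication operators between weighted and unweighted $\Lsp^2$, handle the differential parts through the commuting diagram, and close with an embedding of $\Hsp_\rho^{s+2}$ into an unweighted Sobolev space. The only (harmless) discrepancies are that the paper invokes the weaker embedding $\Hsp_\rho^{s+2}\hookrightarrow\Hsp^{s}$ from \cite{mercier:1982} rather than your sharper $\Hsp_\rho^{s+2}\hookrightarrow\Hsp^{s+1}$, and that it proves the estimates in the order \eqref{eq:est_z3} down to \eqref{eq:est_z0}, so that each differential term is bounded by the already-established $\Lsp_\rho^2$-estimate for the next space in the sequence rather than by your direct computation of $(\gradt w,\,-w)$.
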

\begin{proof}
	Since the domain is bounded we have that $0<\rho <
	R$. Moreover, the continuous embedding
	$\Hsp_\rho^{s+2} \hookrightarrow \Hsp^{s}$ holds \cite{mercier:1982}
	% \abenote{$\Hsp_\rho^{s+2} \hookrightarrow \Hsp^{s}$ (check
	% with the domain or find something better)} 
	and so, for $s\geq 2$ we have that
	$\Hsp_\rho^{s}(\widetilde{{\gradt}}^m) \subset \CySpTT^{m, \,
		0}$, $\Hsp_\rho^{s}(\widetilde{{\curlt}}^m) \subset
	\CySpTT^{m, \, 1}$, $\Hsp_\rho^{s}(\widetilde{{\divergt}}^m)
	\subset \CySpTT^{m, \, 2}$ and $\Hsp_\rho^{s} \subset
	\CySpTT^{m, \, 3}$. The procedure to derive the estimates is
	analogous for all the cases and can be summarized as follows,
	for $k=1, 2, 3$: 
	\begin{align*}
	\norm{u - \Pibr^k u}_\rho & = \norm{u - \parT{\eta_{m, \,
				k}^{-1} \comp \Pi^k \comp \eta_{m,
				\, k}} u}_\rho \\ 
	& = \norm{\parT{\eta_{m, \, k}^{-1}\comp \eta_{m, \, k}} u
		- \parT{\eta_{m, \, k}^{-1} \comp \Pi^k \comp \eta_{m, \, k}}
		u}_\rho \\ 
	& = \norm{\eta_{m, \, k}^{-1}\parT{\eta_{m, \, k}(u) -
			\Pi^k \parT{\eta_{m, \, k}(u)}}}_\rho\\ 
	& \leq C \, \norm{\eta_{m, \, k}^{-1}\parT{\eta_{m, \, k}(u) -
			\Pi^k \parT{\eta_{m, \, k}(u)}}} \\ 
	& \leq C \, \norm{\eta_{m, \, k}(u) - \Pi^k \parT{\eta_{m, \,
				k}(u)}} \\ 
	& \leq C \, h^s \, \norm{\eta_{m, \, k}(u)}_{\Hsp^s}\\
	& \leq C \, h^s \, \norm{\eta_{m, \,
			k}(u)}_{\Hsp_\rho^{s+2}}~, 
	\end{align*}
	where the constant $C$ is different for each inequality, but
	is independent of $u$ and can depends on $h$ only through the
	ratio between the biggest and the smallest element $h_{max} /
	h_{min}$. 
	The estimates for the terms involving the differential
	operators are reduced to the previous ones exploiting the
	commutativity of the projectors. 
	Consider $u \in \Hsp_\rho^{s+2}$, we have that
	\begin{align*}
	\norm{u - \Pibr^3 u}_\rho & \leq
	\norm{\sqrt{\dfrac{R}{\rho}}\,\parT{u
			- \Pi^2 u}}_\rho = \sqrt{R}\norm{u
		- \Pi^2 u} &&\\ 
	& \leq \sqrt{R}\,C\,h^s \norm{u}_{\Hsp^s} \leq  \widetilde{C}
	h^s \norm{u}_{\Hsp_\rho^{s+2}}~. && %\eqref{eq:app_est_2D_cart}. 
	\end{align*}
	This proves~\eqref{eq:est_z3}. \\
	Considering a function $ u \in
	\Hsp_\rho^{s+2}(\widetilde{{\gradt}}^m)$, it follows 
	\begin{align*}
	\norm{u - \Pibr^0 u}_\rho & = \norm{u - \dfrac{\rho}{m}
		\Pi^0 \parT{\dfrac{m}{\rho}u}}_\rho
	\\ 
	& = \norm{\dfrac{\rho}{m}\parT{\dfrac{m}{\rho}u} -
		\Pi^0 \parT{\dfrac{m}{\rho}u}}_\rho \\ 
	& \leq R \norm{\dfrac{m}{\rho}u -
		\Pi^0 \parT{\dfrac{m}{\rho}u}}_\rho \leq C h^s
	\norm{\dfrac{m}{\rho}u}_{\Hsp_\rho^{s+2}}. 
	\end{align*}
	So, we have that
	\begin{equation}
	\label{eq:app_pi0}
	\norm{u - \Pibr^0 u}_\rho \leq C h^s
	\norm{\dfrac{m}{\rho}u}_{\Hsp_\rho^{s+2}} = C h^s
	\norm{\eta_{m, \, 0} (u)}_{\Hsp_\rho^{s+2}}~. 
	\end{equation}
	Consider now a function $\uv \in
	\Hsp_\rho^{s+2}(\widetilde{{\divergt}}^m)$, the
	estimate~\eqref{eq:est_z2} involves the norm 
	\begin{equation*}
	\norm{\uv}_{\Hsp_\rho(\divergtm)}^2 =
	\norm{\uv}_{\Lsp_\rho^2}^2 +
	\norm{\divergtm\uv}_{\Lsp_\rho^2}^2. 
	\end{equation*}
	For the first term we have that
	\begin{align*}
	\norm{\uv - \Pibr^2 \uv}_\rho^2 & = \norm{\uv_{\rho z} -
		\Pi^{1*} \uv_{\rho
			z}}_\rho^2 \\ 
	& + \norm{ u_\theta -
		\dfrac{\rho}{m}\Pi^2 \parT{\dfrac{m\,u_\theta - u_\rho}{\rho}} -
		\dfrac{1}{m}\parQ{\Pi^{1*} \uv_{\rho z}}_\rho }_\rho^2 \\ 
	\intertext{adding and subtracting $\tfrac{u_\rho}{m}$ in the
		second term we obtain} 
	& \leq \norm{\uv_{\rho z} - \Pi^{1*} \uv_{\rho z}}_\rho^2 \\
	& + 2\,\norm{\dfrac{\rho}{m}\parT{ \dfrac{m\,u_\theta -
				u_\rho}{\rho} - \Pi^2 \parT{\dfrac{m\, u_\theta -
					u_\rho}{\rho}}}}_\rho^2 \\ 
	& + 2\,\norm{\dfrac{1}{m}\parT{u_\rho- \parQ{\Pi^{1*}
				\uv_{\rho z}}_\rho }}_\rho^2 \\ 
	& \leq 3\,\norm{\uv_{\rho z} - \Pi^{1*} \uv_{\rho z}}_\rho^2
	+R^2\,\norm{\parT{ \ut_\theta - \Pi^2 \ut_\theta}}_\rho^2  \\ 
	& \leq C h^{2s} \parT{\norm{\uv}_{\Hsp_\rho^{s+2}}^2 +
		\norm{\ut}_{\Hsp_\rho^{s+2}}^2} =C h^{2s}\norm{\eta_{m, \,
			2}(\uv)}_{\Hsp_\rho^{s+2}}^2  . 
	\end{align*}
	We are left with
	\begin{equation}
	\label{eq:app_p2}
	\norm{\uv - \Pibr^2 \uv}_{\Lsp_\rho^2} \leq C h^s
	\norm{\eta_{m, \, 2}(\uv)}_{\Hsp_\rho^{s+2}}. 
	\end{equation}
	The estimate for $\norm{\divergt^m \uv}_\rho^2$ follows from
	the commutativity   of the projectors
	and~\eqref{eq:est_z3}: 
	\begin{align*}
	\norm{\divergtm \parT{\uv - \Pibr^2 \uv}}_\rho =
	\norm{\divergtm\uv - \Pibr^3 \parT{\divergtm\uv}}_\rho
	\leq C h^s \norm{\divergtm\uv}_{\Hsp_\rho^{s+2}}. 
	\end{align*}
	This concludes the proof of~\eqref{eq:est_z2}. The
	estimate~\eqref{eq:est_z1} is proven analogously,
	considering the norm
	\begin{equation*}
	\norm{\uv}_{\Hsp_\rho(\curltm)}^2 =
	\norm{\uv}_{\Lsp_\rho^2}^2 +
	\norm{\curltm\uv}_{\Lsp_\rho^2}^2. 
	\end{equation*}
	For the first term we have that
	\begin{align*}
	\norm{\uv - \Pibr^1 \uv}_\rho^2 & = \norm{u_\theta - \Pi^0
		u_\theta}_\rho^2 \\ 
	& + \norm{\uv_{\rho z} -
		\dfrac{\rho}{m}\Pi^1 \parT{\widetilde{\uv}_{\rho z}} - \dfrac{1}{m} 
		\begin{bmatrix}
		\Pi^0 u_\theta\\
		0
		\end{bmatrix}
	}_\rho^2 \\
	\intertext{adding and subtracting $\parT{\tfrac{u_\theta}{m},
			\, 0}^T$ in the second term we obtain} 
	& \leq \norm{u_\theta - \Pi^0 u\theta}_\rho^2 \\
	& + 2\norm{\dfrac{\rho}{m}\parT{ \widetilde{\uv}_{\rho z} -
			\Pi^1 \parT{\widetilde{\uv}_{\rho z}}}}_\rho^2 
	+ 2\norm{u_\theta-\Pi^0 u_\theta}_\rho^2 \\
	& \leq C h^{2s} \parT{\norm{u_\theta}_{\Hsp_\rho^{s+2}}^2 +
		\norm{\widetilde{\uv}_{\rho z}}_{\Hsp_\rho^{s+2}}^2} =  C
	h^{2s}\norm{\eta_{m, \, 1}(\uv)}_{\Hsp_\rho^{s+2}}^2. 
	\end{align*}
	So we have that
	\begin{equation*}
	\label{eq:app_p1}
	\norm{\uv - \Pibr^1 \uv}_{\Lsp_\rho^2} \leq C h^s
	\norm{\eta_{m, \, 1}(\uv)}_{\Hsp_\rho^{s+2}}. 
	\end{equation*}
	The estimate for $\norm{\curlt^m \uv}_\rho^2$ follows from
	the commutativity   of the projectors
	and~\eqref{eq:app_p2}: 
	\begin{align*}
	\norm{\curltm \parT{\uv - \Pibr^1 \uv}}_\rho & =
	\norm{\curltm\uv
		-
		\Pibr^2 \parT{\curltm\uv}}_\rho
	\\ 
	& \leq C h^s \norm{\eta_{m, \,
			2}\parT{\curltm\uv}}_{\Hsp_\rho^{s+2}}~. 
	\end{align*}
	This conclude the proof of~\eqref{eq:est_z1}. Consider a
	function $ u \in \Hsp_\rho^{s+2}(\widetilde{{\gradt}}^m)$, the
	norm involved in~\eqref{eq:est_z0} is
	\begin{equation*}
	\norm{u}_{\Hsp_\rho(\gradtm)}^2 = \norm{u}_{\Lsp_\rho^2}^2
	+ \norm{\gradtm u}_{\Lsp_\rho^2}^2. 
	\end{equation*}
	For the first term, we already have the estimate given
	by~\eqref{eq:app_pi0}. The estimate for the second term
	follows from the commutativity property of the projectors
	and~\eqref{eq:app_p2}: 
	\begin{align*}
	\norm{\gradtm \parT{u - \Pibr^0 u}}_\rho & =
	\norm{\gradtm u - \Pibr^1 \parT{\gradtm u}}_\rho \\
	& \leq C h^s \norm{\eta_{m, \, 1}\parT{\gradtm u}}_{\Hsp_\rho^{s+2}}.
	\end{align*}
	This concludes the proof of~\eqref{eq:est_z0}.
\end{proof}

\section{\acl{IGA}}
\label{sec:iga}
In this section we briefly introduce the basic concepts of \acl{IGA}
following \cite{higueras:2016}. We will start with the definition of
\acp{B-spline} on a reference two-dimensional domain $\igasrfpar$ in
the univariate case and then we introduce its extension, via tensor
product, to the multivariate case. 

Considering an exact de Rham
complex of continuous spaces defined in the reference domain, we then
define the corresponding conforming discrete \ac{B-spline} spaces and
projectors and we show that they form a commuting diagram. 
We will consider a regular parametrization of the two-dimensional
physical domain, which will be the cross-section of our axisymmetric
domain $\srfdomflat$, described by a \ac{B-spline} or by a \ac{NURBS}
surface. All the results valid on the parametric domain can be
extended to the case of the physical one and error estimates
like~\eqref{eq:app_est_2D_cart} hold.\\ 

We start defining the so-called knot vector $\Xiv$, which is a sequence of
ordered real numbers that we assume, without loss of generality, bounded by
$0$ and $1$. In this work we will consider only open knot vectors,
which are characterized by the fact that the first $p+1$ knots are
equal to $0$ and the last $p+1$ knots are equal to $1$, \textit{i.e.} 
\begin{equation*}
\label{eq:def_Xi}
\Xiv = \parG{\xi_1 = \ldots = \xi_{p+1} < \ldots < \xi_{n+1} = \ldots
	= \xi_{n+p+1}}~, 
\end{equation*}
where $p$ is the degree and $n$ is the number of the \ac{B-spline}
polynomials. 
\ac{B-spline} polynomials are the fundamental functions used to define
our finite-dimensional spaces and can be defined recursively using the
well known Cox-DeBoor formula: starting from piecewise constant
polynomials $\parT{p = 0}$ 
\begin{equation*}
\Bh_i^0\parT{\zeta} = \left\{ 
\begin{aligned}
1 & \qquad \xi_i \leq \zeta < \xi_{i+1} \\
0 & \qquad \text{otherwise}
\end{aligned}
\right.~,
\end{equation*}
the higher degree polynomials $\parT{p \geq 1}$ are defined by
\begin{equation*}
\Bh_i^p\parT{\zeta} = \dfrac{\zeta - \xi_i}{\xi_{i+p} - \xi_i}
\Bh_i^{p-1}\parT{\zeta} + \dfrac{\xi_{i+p+1} - \zeta}{\xi_{i+p+1} -
	\xi_{i+1}} \Bh_{i+1}^{p-1}\parT{\zeta}~, 
\end{equation*}
with the convention that $0/0$ is equal to $0$.
This formula generates a set of $n$ \ac{B-spline} which  has many
favorable properties.  In particular, these functions are non-negative,   form a partition of
unity and  have local support. Moreover, the support the $i$-th
\ac{B-spline} is contained in the interval $\parQ{\xi_i, \,
	\xi_{i+p+1}}$, so the size of the support is reduced by knot
repetitions (see Figure~\ref{fig:bspl_uni}): 
\begin{equation*}
\Bh_{i}^{p}\parT{\zeta}=0~, \qquad \zeta \notin \parQ{\xi_i, \,
	\xi_{i+p+1}}~. 
\end{equation*}
Conversely, in each interval $\parQ{\xi_j, \, \xi_{j+1}}$ there are
exactly $p+1$ \acp{B-spline} which are different from $0$: 
\begin{equation*}
\Bh_{i}^{p}\parT{\zeta}=0~, \qquad \zeta \in \parQ{\xi_j, \, \xi_{j+1}},
\, i \notin \parG{j, \, j-1, \, \ldots, \, j-p}~. 
\end{equation*}
It is possible to describe the knot vector $\Xiv$ using other two
vectors: a vector containing the knots without repetition, that we
indicate with $\zetav \in \R^\ell$, and a vector containing the number
of times each knot is repeated $1 \leq r_i \leq p, \, 2 \leq i \leq
\ell-1$, with $r_1 = r_\ell = p+1$. The number $\alpha_i = p - r_i$
denotes the regularity of the \ac{B-spline} function at the knot
$\zeta_i$. 
In analogy to the standard \ac{FEM}, we can use $\zetav$
to define elements of a mesh with the corresponding mesh size $h_i =
\zeta_{i+1} - \zeta_i, ~1 \leq i \leq \ell - 1$. We say that the
partition defined by $\zetav$ is locally quasi uniform if there exists
a constant $\eta \geq 1$ such that 
\begin{equation*}
\eta^{-1} \leq \dfrac{h_i}{h_{i+1}} \leq \eta, \qquad 1\leq i
\leq \ell-2. 
\end{equation*}
We can now define the spline spaces \cite{buffa:2011, buffa:2010} as
\begin{equation*}
S^{p}_{\alphav}\parT{\zetav} = \spant \parG{\Bh_i^p, \,
	i=1, \, \ldots, \, n}~. 
\end{equation*}
Note that spline space can be completely characterized either by the
knot vector $\Xiv$ or by the degree $p$, the mesh $\zetav$ and the
regularity $\alphav$ (or the knot repetitions).  
Figure~\ref{fig:bspl_uni} shows two sets of quadratic \ac{B-spline}
basis functions generated by two different knot vectors with the same
elements but different regularity. 
\begin{figure}[!htb]
	\centering
	\includegraphics[width=0.42\textwidth]{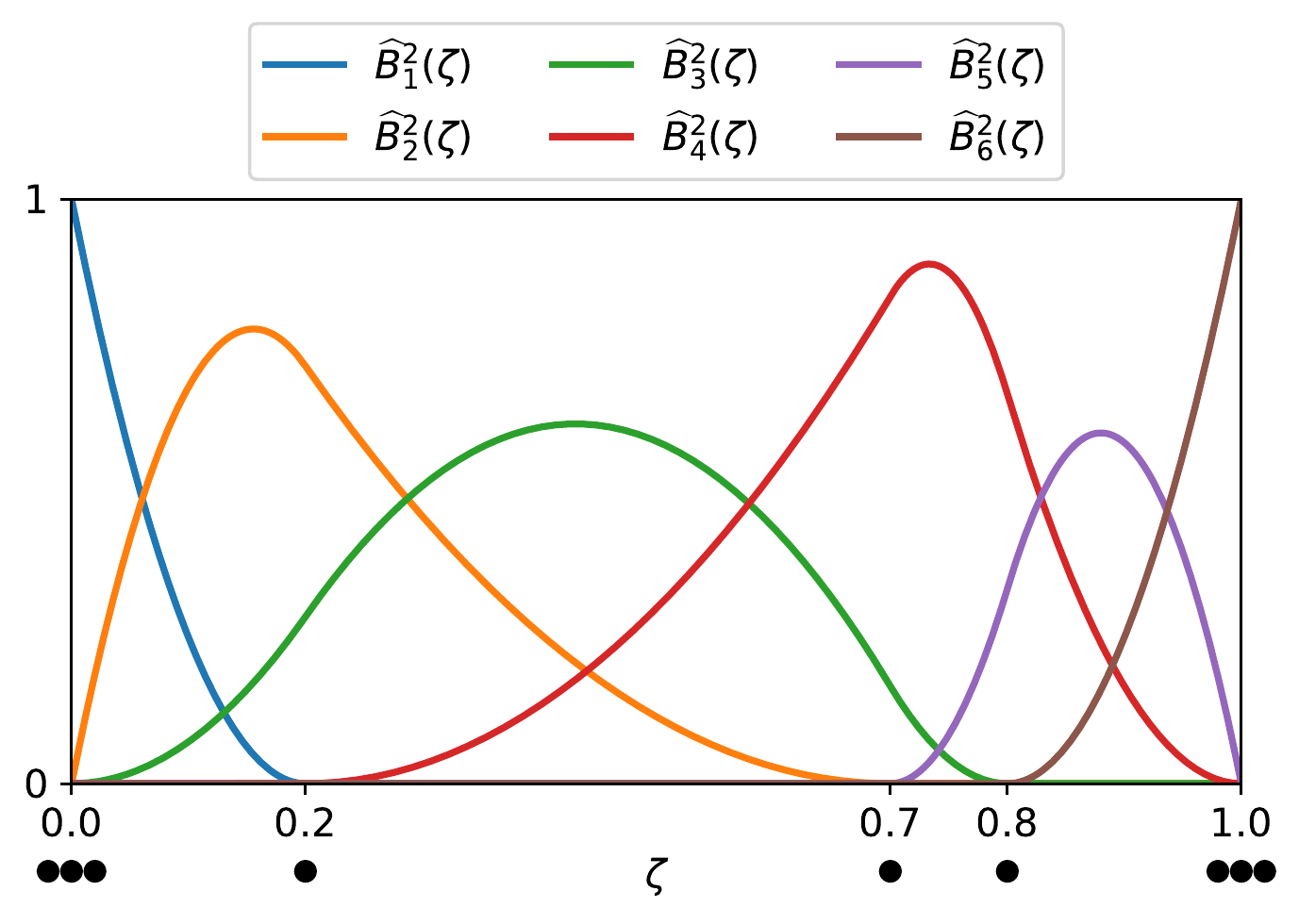}~~ 
	\includegraphics[width=0.42\textwidth]{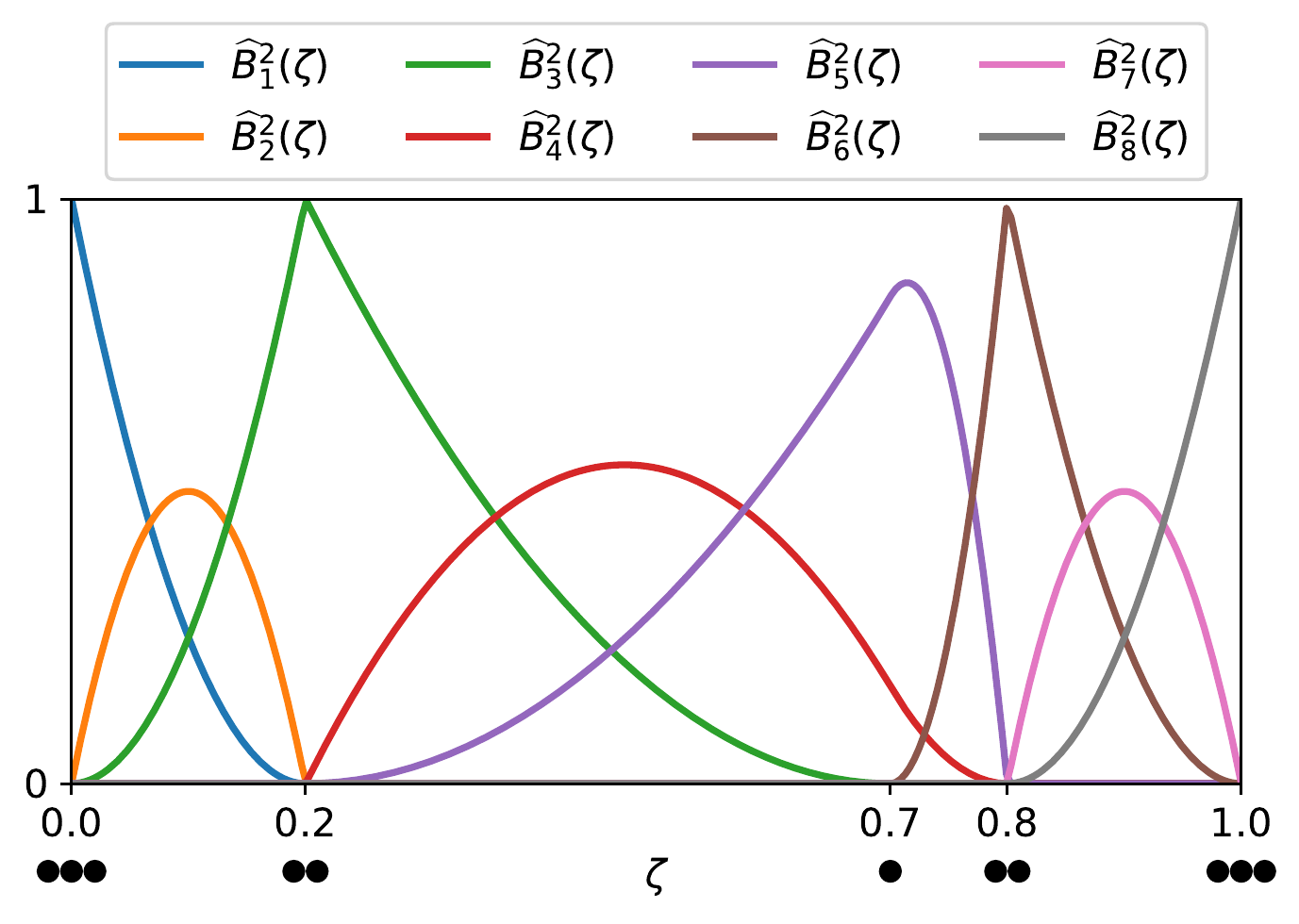}
	\caption{Two set of quadratic \ac{B-spline} basis functions
		generated by knot vectors including the same points with different multiplicities.
		The dots below the
		$\zeta$-axis indicate the knot repetitions. 
		%           from the knots $\Xiv^{a} = (0,\, 0,
		%          \, 0, \, 0.2,\,  0.7, \, 0.8,\,  1,\,  1, \, 1)$ (left) and
		%          $\Xiv^{b} = (0,\, 0, \, 0, \, 0.2,\,  0.2,\,  0.7, \, 0.8,\,
		%          0.8,\,  1,\,  1, \, 1)$ (right). The dots below the
		%          $\zeta$-axis indicate the knots repetitions. The two spaces
		%          are defined on the same mesh $\zetav = (0, \, 0.2, \, 0.7,
		%          \, 0.8, \, 1)$, but the space on the right has higher
		%          regularity and less basis functions. 
		Note that the support
		of the less regular \acp{B-spline} (right) is smaller that
		the ones with higher regularity (left).} 
	\label{fig:bspl_uni}
\end{figure}
%In the following we will often omit the regularity of the
%\acp{B-spline} assuming that it will be at least the one required
%from the context. We will also omit the dependence on the degree $p$
%and the knot vector $\Xiv$ whenever it is not needed or clear from
%the context. 
The multivariate spaces are simply defined via tensor product. We will explicitly introduce the quantities in the
two-dimensional case, since it is the most relevant  for this work. 
Let $\Xiv_1$ and $\Xiv_2$ be   two locally quasi uniform open knot
vectors corresponding to parametric directions. The associated
shape-regular Bezier mesh on $\igasrfpar = \parT{0, \, 1}^2$ is given by 
\begin{equation*}
\Qch_h = \parG{{Q} = \parT{\zeta_{i}, \, \zeta_{i+1}}
	\times \parT{\zeta_{j}, \, \zeta_{j+1}}, \, 1 \leq i \leq
	\ell_1, ~ 1 \leq j \leq \ell_2}, 
\end{equation*}
where $h = \max\parG{\diam({Q}), \, {Q} \in \Qch_h}$ is the global
mesh size. We will denote the coarsest mesh  $\Qch_0 $ with the subscript $0$. 
From the univariate \ac{B-spline} basis functions $\Bh_{i, \, d}$
defined by $\Xiv_d$, $d = 1, 2$, we define the tensor product basis
functions on $\igasrfpar$: 
\begin{equation*}
\Bh_{ij}^{p_1, \, p_2} =\Bh_{i, \, 1}^{p_1} \otimes
\Bh_{j, \, 2}^{p_2}, \qquad 1 \leq i \leq n_1, \, 1 \leq j
\leq n_2 
\end{equation*}
and the spline space is analogously defined as:
\begin{equation*}
S^{p_1, \, p_2}_{\alphav_1, \, \alphav_2}\parT{\Qch_h} =
\spant \parG{\Bh_{ij}^{p_1, \, p_2}, \, 1 \leq i \leq n_1, \,
	1 \leq j \leq n_2}~. 
\end{equation*}
We also define a   generalization of \acp{B-spline}, called
\ac{NURBS}, which is particularly useful to describe the geometry
since it allows to exactly represent conic sections. Given set of
weights $w_{ij} \geq 0, \, 1 \leq i \leq n_1, \,  1 \leq j \leq n_2$
thaat sum to one, \ac{NURBS} are defined by
\begin{equation*}
\widehat{N}_{ij}^{p_1, \, p_2} = \dfrac{w_{ij}\Bh_{ij}^{p_1,
		\, p_2}}{\sum_{k, \ell}w_{k\ell}\Bh_{k\ell}^{p_1, \,
		p_2}}. 
\end{equation*}
Gathering the weights in a vector $\veca{W}$ we define the associated
\ac{NURBS} space: 
\begin{equation*}
N_{\alphav_1, \, \alphav_2}^{p_1, \, p_2}\parT{\Qch_h,
	\veca{W}} = \spant \parG{\widehat{N}_{ij}^{p_1, \,
		p_2}, \, 1 \leq i \leq n_1, \, 1 \leq j \leq n_2}~. 
\end{equation*}
It is now possible to introduce the following \ac{B-spline} vector
spaces defined on the reference domain $\igasrfpar$: 
\begin{equation}
\label{eq:cart_discr_sps_ref}
\begin{alignedat}{2}
&\CaSpBP_h^0 &&= S^{p_1, \, p_2}_{\alphav_1, \,
	\alphav_2}\parT{\Qch_h},\\ 
&\CaSpBP_h^1 &&= S^{p_1-1, \, p_2}_{\alphav_1-1, \,
	\alphav_2}\parT{\Qch_h} \times S^{p_1, \, p_2-1}_{\alphav_1, \,
	\alphav_2-1}\parT{\Qch_h},\\ 
&\CaSpBP_h^{1*} &&= S^{p_1, \, p_2-1}_{\alphav_1, \,
	\alphav_2-1}\parT{\Qch_h} \times S^{p_1-1, \, p_2}_{\alphav_1-1, \,
	\alphav_2}\parT{\Qch_h},\\ 
&\CaSpBP_h^2 &&= S^{p_1-1, \, p_2-1}_{\alphav_1-1, \,
	\alphav_2-1}\parT{\Qch_h}. 
\end{alignedat}
\end{equation}
These are the discrete conforming counterparts of the spaces
\begin{equation*}
\label{eq:cart_sps_ref}
\begin{alignedat}{3}
&\CaSpBP^{0} && = \Hsp(\widehat{\gradt};\, \igasrfpar)&& =
\parG{u \in \Lsp^2(\igasrfpar): \widehat{\gradt} \,u \in
	\Lsp^2(\igasrfpar; \, \R^2)},\\ 
&\CaSpBP^{1} && = \Hsp(\widehat{\curltsc};\, \igasrfpar) &&=
\parG{\uv \in \Lsp^2(\igasrfpar; \, \R^2): \widehat{\curltsc} \,\uv
	\in \Lsp^2(\igasrfpar)},\\ 
&\CaSpBP^{1*} && = \Hsp(\widehat{\divergt};\, \igasrfpar)&& =
\parG{\uv \in \Lsp^2(\igasrfpar; \, \R^2): \widehat{\divergt} \,\uv
	\in \Lsp^2(\igasrfpar)}, \\ 
& \CaSpBP^{2} && = \Lsp^2(\igasrfpar).&&
\end{alignedat}
\end{equation*}
Following \cite{buffa:2013}, it is then possible to define a set of projectors $\widehat{\Pi}^{k}$,
$k=0, 1, 1*, 2$ such that the following
diagrams are commutative
\begin{equation*}
\begin{split}
\tikzsetnextfilename{de_rham/de_rham_2d_cart_curl_hat}
\begin{tikzpicture}[commutative diagrams/every diagram,
arr/.style={commutative diagrams/.cd,
	every arrow,
	every label}]
\def\minW{25mm}
\def\minH{15mm}
\def\hShift{10mm}
\def\vShiftA{20mm}
\def\vShiftB{4mm}
\def\auxA{1mm}
\def\auxB{4mm}
%
% Nodes
\node (X0) [] at (0, 0) {$\CaSpBP^0$};
\node (X1) [xshift=1.1*\minW] at (X0) {$\CaSpBP^1$};
\node (X2) [xshift=1.1*\minW] at (X1) {$\CaSpBP^2$};
%\node (X3) [xshift=1.1*\minW] at (X2) {$\CySpT^{m, \, 3}$};
%
\node (X0h) [yshift=-\minH] at (X0) {$\CaSpBP_h^{0}$};
\node (X1h) [yshift=-\minH] at (X1) {$\CaSpBP_h^{1}$};
\node (X2h) [yshift=-\minH] at (X2) {$\CaSpBP_h^{2}$};
%\node (X3h) [yshift=-\minH] at (X3) {$\CySpT_h^{m, \, 3}$};
%
\node (R) [xshift=-0.6*\minW] at (X0) {$\R$};
\node (Z) [xshift=0.6*\minW] at (X2) {$0$};
\node (Rh) [yshift=-\minH] at (R) {$\R$};
\node (Zh) [yshift=-\minH] at (Z) {$0$};
% Arrows
\path[draw, arr] (X0) -- (X1) node [midway] {$\gradth$};
\path[draw, arr] (X1) -- (X2) node [midway] {$\curltsch$};
%\path[draw, arr] (X2) -- (X3) node [midway] {$\divergtm$};
\path[draw, arr] (R) -- (X0);
\path[draw, arr] (X2) -- (Z);
\path[draw, arr] (X0h) -- (X1h) node [midway] {$\gradth$};
\path[draw, arr] (X1h) -- (X2h) node [midway] {$\curltsch$};
%\path[draw, arr] (X2h) -- (X3h) node [midway] {$\divergtm$};
\path[draw, arr] (Rh) -- (X0h);
\path[draw, arr] (X2h) -- (Zh);
\path[draw, arr] (X0) -- (X0h) node [midway, anchor=west, yshift=0.8mm] {$\widehat{\Pi}^0$};
\path[draw, arr] (X1) -- (X1h) node [midway, anchor=west, yshift=0.8mm] {$\widehat{\Pi}^1$};
\path[draw, arr] (X2) -- (X2h) node [midway, anchor=west, yshift=0.8mm] {$\widehat{\Pi}^2$};
%\path[draw, arr] (X3) -- (X3h) node [midway, anchor=west, yshift=0.8mm] {$\Pibr^3$};
\end{tikzpicture}
\end{split}
\end{equation*}
\begin{equation*}
\begin{split}
\tikzsetnextfilename{de_rham/de_rham_2d_cart_div_hat}
\begin{tikzpicture}[commutative diagrams/every diagram,
arr/.style={commutative diagrams/.cd,
	every arrow,
	every label}]
\def\minW{25mm}
\def\minH{15mm}
\def\hShift{10mm}
\def\vShiftA{20mm}
\def\vShiftB{4mm}
\def\auxA{1mm}
\def\auxB{4mm}
%
% Nodes
\node (X0) [] at (0, 0) {$\CaSpBP^0$};
\node (X1) [xshift=1.1*\minW] at (X0) {$\CaSpBP^{1*}$};
\node (X2) [xshift=1.1*\minW] at (X1) {$\CaSpBP^2$};
%\node (X3) [xshift=1.1*\minW] at (X2) {$\CySpT^{m, \, 3}$};
%
\node (X0h) [yshift=-\minH] at (X0) {$\CaSpBP_h^{0}$};
\node (X1h) [yshift=-\minH] at (X1) {$\CaSpBP_h^{1*}$};
\node (X2h) [yshift=-\minH] at (X2) {$\CaSpBP_h^{2}$};
%\node (X3h) [yshift=-\minH] at (X3) {$\CySpT_h^{m, \, 3}$};
%
\node (R) [xshift=-0.6*\minW] at (X0) {$\R$};
\node (Z) [xshift=0.6*\minW] at (X2) {$0$};
\node (Rh) [yshift=-\minH] at (R) {$\R$};
\node (Zh) [yshift=-\minH] at (Z) {$0$};
% Arrows
\path[draw, arr] (X0) -- (X1) node [midway] {$\rotth$};
\path[draw, arr] (X1) -- (X2) node [midway] {$\divergth$};
%\path[draw, arr] (X2) -- (X3) node [midway] {$\divergtm$};
\path[draw, arr] (R) -- (X0);
\path[draw, arr] (X2) -- (Z);
\path[draw, arr] (X0h) -- (X1h) node [midway] {$\rotth$};
\path[draw, arr] (X1h) -- (X2h) node [midway] {$\divergth$};
%\path[draw, arr] (X2h) -- (X3h) node [midway] {$\divergtm$};
\path[draw, arr] (Rh) -- (X0h);
\path[draw, arr] (X2h) -- (Zh);
\path[draw, arr] (X0) -- (X0h) node [midway, anchor=west, yshift=0.8mm] {$\widehat{\Pi}^0$};
\path[draw, arr] (X1) -- (X1h) node [midway, anchor=west, yshift=0.8mm] {$\widehat{\Pi}^{1*}$};
\path[draw, arr] (X2) -- (X2h) node [midway, anchor=west, yshift=0.8mm] {$\widehat{\Pi}^2$};
%\path[draw, arr] (X3) -- (X3h) node [midway, anchor=west, yshift=0.8mm] {$\Pibr^3$};
\end{tikzpicture}
\end{split}
\end{equation*}
%\begin{equation*}
%\begin{split}
%\includegraphics{img/de_rham/cart_2D_grad_ext_diag_discr_hat}
%\end{split}
%\end{equation*}
%\begin{equation*}
%\begin{split}
%\includegraphics{img/de_rham/cart_2D_rot_ext_diag_discr_hat}
%\end{split}
%\end{equation*}
Once that the spaces are defined on the parametric domain, we turn our
attention to the physical domain $\srfdomflat$, which we assume to be
represented as $\srfdomflat = \vec{F}(\igasrfpar)$, where the
parametrization $\Fv$ is a \ac{NURBS} surface generated by a given
set of control points, which are the coefficients multiplying the
basis functions, see Figure~\ref{fig:map}.
\begin{figure}
	\centering
	\includegraphics{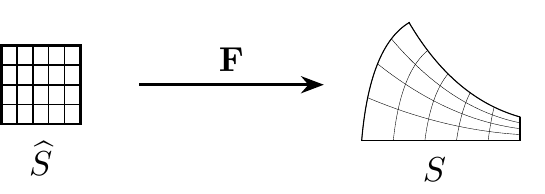}
	\caption{Geometry mapping from reference domain.}
	\label{fig:map}
\end{figure}
Given a mesh $\Qch_h$ on the reference domain, the parametrization
induces a mesh in the physical one: 
\begin{equation*}
\Qc_h = \parG{K \subset \srfdomflat:\, K = \Fv(Q), \, Q \in \Qch_h}.
\end{equation*}
We assume that the parametrization $\Fv$ is regular in the sense of
\cite[Assumption 3.1]{higueras:2016}, that is, we assume that it is a
bi-Lipschitz homeomorphism between $\igasrfpar$ and $\srfdomflat$,
$\eval{\Fv}_{\closure{Q}} \in C^\infty(\closure{Q})$, $\closure{Q} \in
\Qch_0$ and $\eval{\Fv^{-1}}_{\closure{{K}}} \in
C^\infty({\closure{K}})$, $\closure{{K}} \in \Qc_0$. 
With the introduced parametrization, the following pullbacks that
relate spaces on the physical domain to the corresponding ones in the
parametric domain are well-defined: 
\begin{equation}
\label{eq:pushback}
\begin{alignedat}{4}
&\iota^0&&: \CaSpB^0 \rightarrow \CaSpBP^0,  \quad&& v &&\mapsto v
\comp \Fv~, \\ 
&\iota^1&&: \CaSpB^1 \rightarrow \CaSpBP^1,  \quad&& \vv &&\mapsto
\jac{\Fv}^T\parT{\vv \comp \Fv}~,\\ 
&\iota^{1*}&&: \CaSpB^{1*} \rightarrow \CaSpBP^{1*},  \quad&& \vv
&&\mapsto \det\parT{\jac{\Fv}}\jac{\Fv}^{-1}\parT{\vv \comp \Fv}~, \\ 
&\iota^2&&: \CaSpB^2 \rightarrow \CaSpBP^2, \quad && v&& \mapsto
\det\parT{\jac{\Fv}}\parT{v \comp \Fv}~.\\ 
\end{alignedat}
\end{equation}
The discrete spaces on the physical domain are defined by
push-forward, \textit{i.e.} applying the inverse of~\eqref{eq:pushback} to the
discrete spaces on the reference domain~\eqref{eq:cart_discr_sps_ref}: 
\begin{equation*}
\label{cart_discr_sps}
\begin{alignedat}{2}
&\CaSpB_h^0 &&= \parG{v_h: \iota^0(v_h) \in \CaSpBP_h^0},\\
&\CaSpB_h^{1} &&= \parG{\vv_h: \iota^1(\vv_h) \in \CaSpBP_h^1},\\ 
&\CaSpB_h^{1*} &&= \parG{\vv_h: \iota^{1*}(\vv_h) \in
	\CaSpBP_h^{1*}},\\ 
&\CaSpB_h^2 &&= \parG{v_h: \iota^2(v_h) \in \CaSpBP_h^2}.
\end{alignedat}
\end{equation*}
Moreover, we assume that the regularity of the parametrization $\Fv$ is higher or equal to the one of the discrete spaces~\eqref{eq:cart_discr_sps_ref}~\cite{buffa:2011}.
Between the continuous and the discrete spaces it is possible to
define stable projectors such that the
diagrams~\eqref{eq:2D_curl_comp} and~\eqref{eq:2D_div_comp} are
commuting. Finally, the following estimates hold (2D analogous of
\cite[Corollary 5.12]{higueras:2016}): 
\begin{equation*}
\label{eq:iga_estimates}
\begin{aligned}
& \norm{u - \Pi^0 u}_{\Hsp^1(\srfdomflat)}  \leq C
h^{s}\norm{u}_{\Hsp^{s+1}(\srfdomflat)}, && u \in
\Hsp^{s+1}(\srfdomflat),\\ 
& \norm{\uv - \Pi^1 \uv}_{\Hsp(\curltsc;\,\srfdomflat)}\leq C
h^{s}\norm{\uv}_{\Hsp^{s}(\curltsc;\,S)}, && \uv \in
\Hsp^{s}(\curltsc;\,\srfdomflat),\\ 
&\norm{\uv - \Pi^{1*} \uv}_{\Hsp(\divergt;\,\srfdomflat)} \leq C
h^{s}\norm{\uv}_{\Hsp^{s}(\divergt;\,\srfdomflat)}, && \uv \in
\Hsp^{s}(\divergt;\,\srfdomflat),\\ 
&\norm{u - \Pi^2 u}_{\Lsp^2(\srfdomflat)} \leq C
h^{s}\norm{u}_{\Hsp^{s}(\srfdomflat)}, && u \in
\Hsp^{s}(\srfdomflat). 
\end{aligned}
\end{equation*}

\section{Numerical experiments}
\label{sec:tests}
We present the results of several numerical experiments carried out
with the discretization described in the previous sections. 
\subsection{Source problem}
\label{sec:source}
In this test case we consider the problem of reconstructing the magnetic flux $\Bv$ in a stationary case on an axisymmetric domain whose cross-section
is depicted in Figure~\ref{fig:pin_head_bc} (left).
\begin{figure}[!htb]
	\centering
	\includegraphics[width=0.34\textwidth]{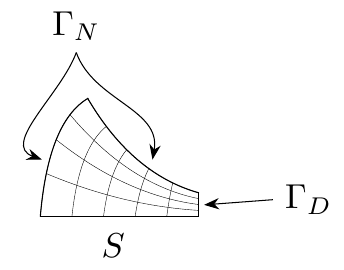} 
	\includegraphics[width=0.54\textwidth, trim=90mm 60mm 30mm
	80mm, clip]{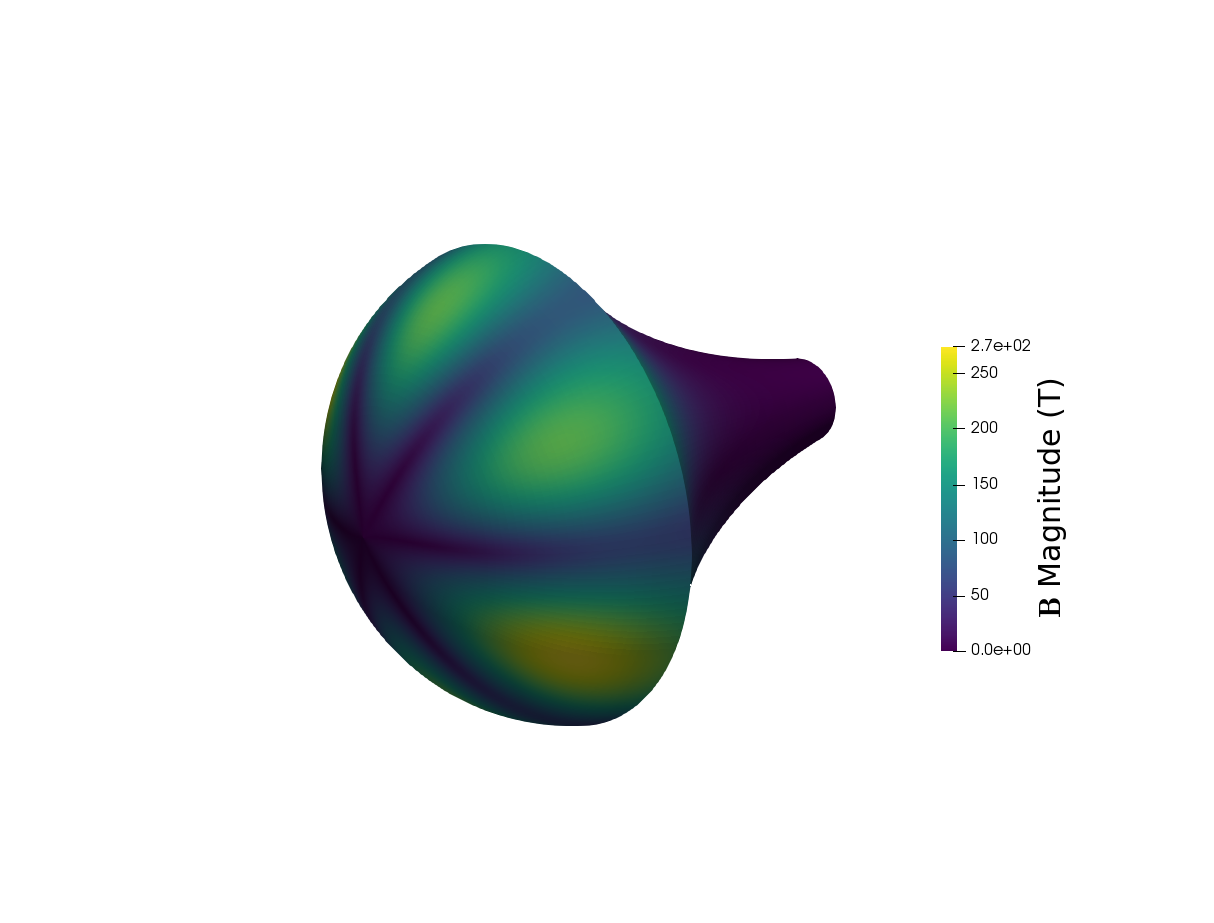} 
	\caption{Section of the computational domain with the
		boundaries associated to $\srfdom_N$ ($\Gamma_N$) and
		$\srfdom_D$ ($\Gamma_D$) (left) and magnitude of the
		magnetic flux density $\Bv$ on the domain (right).} 
	\label{fig:pin_head_bc}
\end{figure}%
To do so, we employ the so-called $\Av$-formulation, already introduced in \eqref{eq:A_form_st}, which amounts to compute the vector potential $\Av$ which can be then related to the flux by $\curlt \Av = \Bv$. 
Notice that the accurate computation of the vector potential is essential in modelling particle tracking in accelerator magnets, as
discussed in \cite{abele:2019}. 
Using a Coulomb gauge and imposing
homogeneous Dirichlet boundary conditions on $\srfdom_D$ regularized by the same Coulomb gauge 
and Neumann boundary conditions on $\srfdom_N$, we have that
\begin{equation*}
\label{pb:source_vp}
\begin{aligned}
\curlt \parT{\pmeab^{-1} \curlt \magvp}  &= \jv~,  &&\qquad \text{in}~
\voldomaxis~, \\ 
\divergt \parT{\pmitt \magvp} &= 0~, &&\qquad \text{in}~ \voldomaxis~,
\\ 
\parT{\pmeab^{-1}\curlt \magvp } \times \normal & = \jv_\srfdom~, &&\qquad \text{on}~ \srfdom_N~, \\ 
\magvp \times \normal &= \vec{0}~, &&\qquad \text{on}~ \srfdom_D~, \\
\parT{\pmitt \magvp} \cdot \normal & = 0~, &&\qquad \text{on}~ \srfdom_N~, \\ 
\divergt \parT{\pmitt \magvp} &= 0~, &&\qquad\text{on}~ \srfdom_D~,
\end{aligned}
\end{equation*}
where $\jv$ and $\jv_\srfdom$ are given source and surface current densities, respectively, while $\pmitt$ and $\pmeab$ are scalar constants, given by the corresponding values of these quantities in vacuum:
\begin{equation}
\label{eq:vacuum_epsmu}
\pmitt \approx \SI{8.8542e-12}{F.m^{-1}} ~, \qquad \pmeab =
\SI{4\pi e-7}{H.m^{-1}}~. 
\end{equation}
Note that, due to the fact that the permeability is constant, the magnetic
flux density and the magnetic field strength are related by the simple
relation $\Bv = \pmeab \Hv$. 
The method is tested using a manufactured solution which depends on a
parameter $\gamma$ and satisfies the homogeneous Dirichlet conditions
on $\srfdom_D$: 
\begin{equation*}
\magvp^{ex} = \begin{bmatrix}
\cos{3\theta} \, (5 - z)^3 \, \rho^{\gamma+1} \, e^{-\rho} \\
\rho^2 \,(\sin{\theta} - 2\sin{\theta}^3) \, (5-z)^\gamma\\
\sin{2\theta} \, (1-\cos(5 - z)) \, \rho^{\gamma+1}
\end{bmatrix}~.
\end{equation*}
The vector potential $\magvp^{ex}$ is not Coulomb gauged,
\textit{i.e.} $\divergt(\pmitt \magvp^{ex}) \neq 0$  and  will be different
from the solution of~\eqref{pb:source_vp}, but the magnetic
induction $\Bv = \curlt \magvp$, which is represented in
Figure~\ref{fig:pin_head_bc} (right), is gauge independent and is
used to compute the error.
A mixed formulation has been used to impose the Coulomb gauge and solve \eqref{pb:source_vp}. Using the method proposed in this work allows to solve a sequence of decoupled two-dimensional problems (one for each Fourier mode $m$) instead of a full three-dimensional one, with a significant advantage in terms of computational cost. In particular, consider the two finite-dimensional spaces, characterized by their basis functions,
\begin{align*}
\spant\parT{\parG{b_i}_{i=1}^{N_0}} & = \CySpT^{m, \, 0}_{h, \, \GammaA} \subset \CySpT_{\GammaA}^{m, \, 0} = \Hsp_\rho\parT{\gradtm}~,\\
\spant\parT{\parG{\vec{c}_i}_{i=1}^{N_1}} & = \CySpT^{m, \, 1}_{h, \, \GammaA}  \subset \CySpT_{\GammaA}^{m, \, 1} = \Hsp_\rho\parT{\curltm}~,
\end{align*}
where $N_0$ and $N_1$ indicate the dimensions of the corresponding finite-dimensional spaces and the subscript $\GammaA$ denote the boundary on which essential boundary conditions are imposed. Notice that, even if it is not explicit in the notation, the basis functions are different for each value of $m$.
The discrete mixed formulation for problem~\eqref{pb:source_vp} amounts to solving for each value of $m\neq0$ the following linear system:
\begin{equation}
\label{eq:small_lin_sys}
\begin{bmatrix}
\mat{A}_m & \mat{B}_m\\
\mat{B}_m^T  & \mat{0}
\end{bmatrix}
\begin{bmatrix}
\veca{u}_m\\
\veca{p}_m
\end{bmatrix}
=
\begin{bmatrix}
\veca{f}_m\\
\veca{0}
\end{bmatrix}
\end{equation}
where $\veca{u}_m$ and $\veca{p}_m$ are the \acp{DoF} associated to the discrete solution, 
\begin{align*}
\parQ{\mat{A}_m}_{i, \, j} & = \int \parT{\curltm \vec{c}_j} \cdot \parT{\curltm \vec{c}_i}\,\rho \dd\rho\dd z~,\\
%\parQ{\mat{M}_m}_{i, \, j} & = \int \phiv_j \cdot \phiv_i \,\rho \dd\rho\dd z~,\\
\parQ{\mat{B}_m}_{i, \, j} & = \int \parT{\gradtm b_j} \cdot \vec{c}_i \,\rho \dd\rho\dd z
\end{align*}
and $\veca{f}_m$ is the term associated to the current density and the boundary term.
If all the \acp{DoF} corresponding to each mode are collected in a single vector $\veca{x}, $ we end up with  a linear system of the form shown in Figure~\ref{fig:big_sys},
\begin{figure}[!htb]
	\centering
	\includegraphics[width=0.40\textwidth]{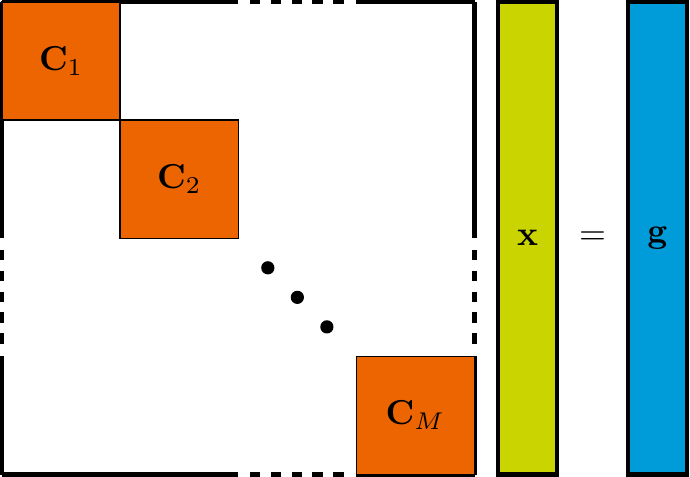}
	\caption{Block diagonal structure of the linear system.}
	\label{fig:big_sys}
\end{figure}
where each matrix $\mat{C}_m$ corresponds to that in~\eqref{eq:small_lin_sys} and $\veca{g}$ is obtained by the concatenation of all the right-hand side terms in~\eqref{eq:small_lin_sys}. For this testcase we choose the modes
\begin{equation*}
m = \pm 1, \,  \pm 2, \, \pm 3~,
\end{equation*}
so that $M=6$.
Note that the block diagonal structure in Figure~\ref{fig:big_sys} is obtained thanks to the Fourier basis and would not be obtained for a general choice of the basis in the angular direction.
%which produces the following magnetic field:
%\begin{equation}
%\Bv =  \begin{bmatrix}
%2\rho^2\,(5 - z)\,(\sin{\theta} - 2\sin{\theta}^3) -
%2\rho^2\,\cos{2\theta}\,(\cos{5 - z} - 1)\\ 
%3\rho^2\,\sin{2\theta}\,(\cos{5 - z} - 1) -
%3\rho^4\cos{3\theta}\,e^{-\rho}\,(5 - z)^2\\ 
%3\rho\,(5 - z)^2\,(\sin{\theta} - 2\sin{\theta}^3) +
%3\rho^3\,\sin{3\theta}\,e^{-\rho}\,(5 - z)^3 
%\end{bmatrix}~.
%\end{equation}
%The uniqueness of the computed vector potential is achieved
%considering a Coulomb gauge, a null normal component of $\magvp$ on
%the Neumann boundary and null weak divergence on the Dirichlet
%boundary. Clearly the gauge of the manufactured vector potential is
%not the same of the computed one, so all the comparison will be done
%with respect to the magnetic induction. 

In the left plot of Figure~\ref{fig:pin_head_Bl2_vs_sub}, we report
the error trend with respect to the number of subdivisions
($h$-refinement) for a smooth solution ($\gamma=2$). 
\begin{figure}[!htb]
	\centering
	\includegraphics[width=0.41\textwidth]{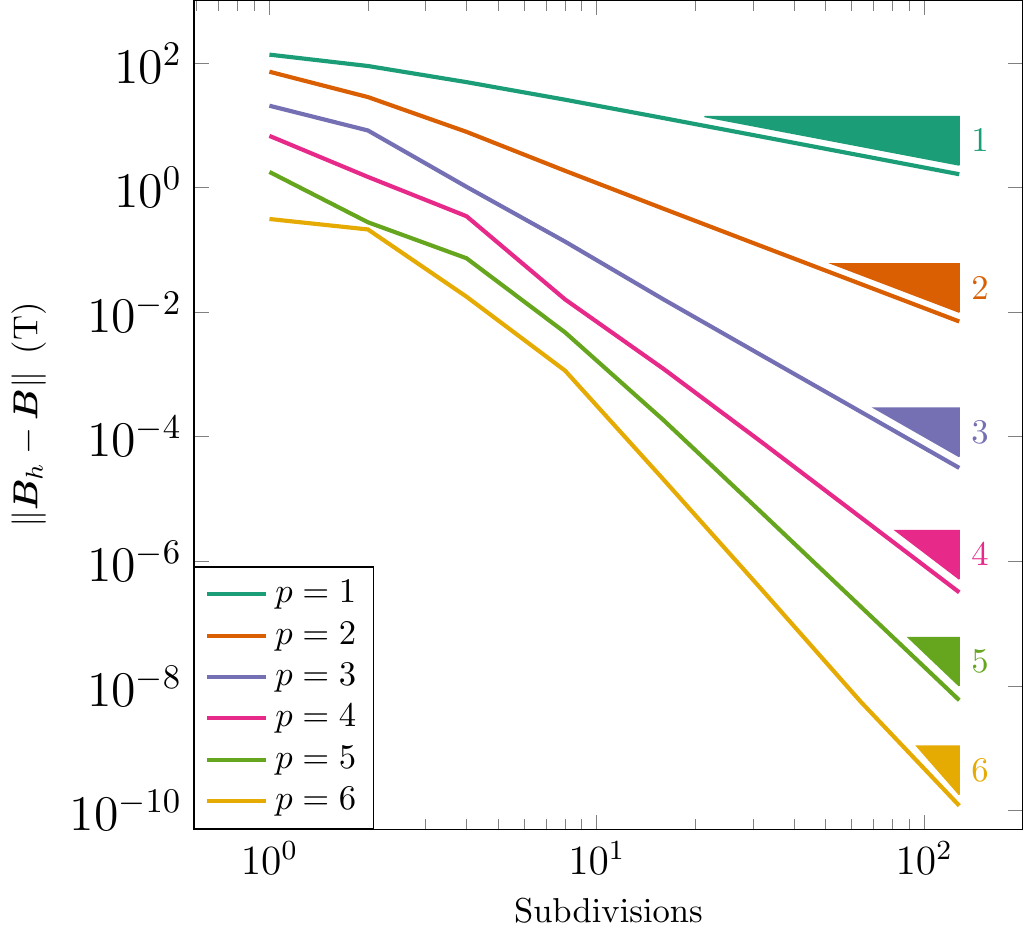}~~~~
	\includegraphics[width=0.41\textwidth]{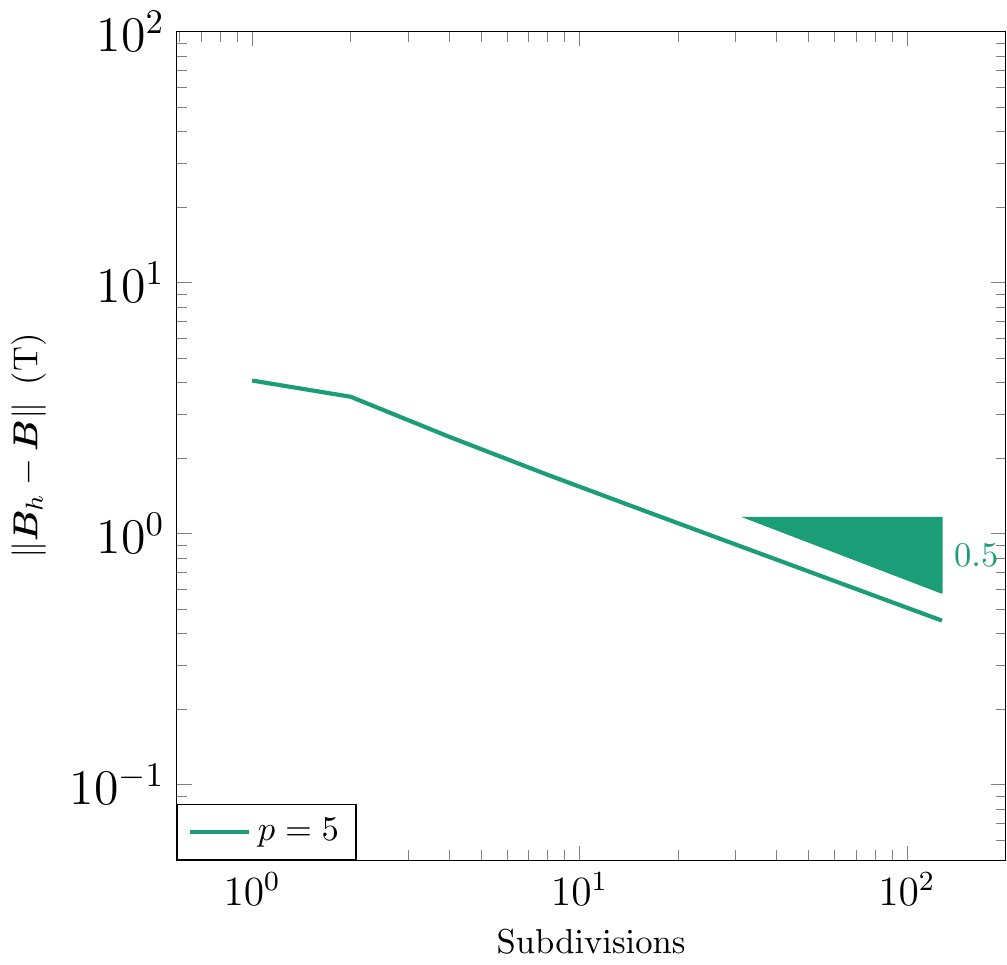} 
	\caption{Error on the magnetic field with respect to the
		number of subdivisions for a regular solution (left,
		$\gamma=2$) and for a non-smooth solution (right,
		$\gamma=0.5$) which limits the convergence rate.} 
	\label{fig:pin_head_Bl2_vs_sub}
\end{figure}
In this case the convergence rate is equal to the polynomial degree
$p$ of the basis functions. In the case of a less regular solution
with $\gamma=0.5$, we can see  instead in
Figure~\ref{fig:pin_head_Bl2_vs_sub} (right) that the low regularity
of the solution limits the convergence rate as expected. 

\subsection{Pillbox cavity}
\label{sec:pillbox}
In this section, we apply the method presented in this work
to the computation of the eigenvalues for a cylindrical cavity, known
in the literature as pillbox cavity \cite[\S8.7]{jackson:2007},  see Figure~\ref{fig:pillbox}. 
%This cavity belongs to the class of the cylindrical ones, in the
%sense that the cross-section perpendicular to the $z$-axis is equal
%along $z$. 
Given a pillbox cavity of radius $R=\SI{35}{mm}$ and length
$L=\SI{100}{mm}$, we want to compute
the eigenvalues and eigenfunctions for the electric  field strength $\Ev$,
satisfying the equations
\begin{equation}
\label{pb:eig_E}
%\left\{
\begin{alignedat}{2}
& \curlt \parT{\pmeab^{-1} \curlt\parT{\Ev}} = \omega^2 \pmitt \Ev~,
\qquad && \text{in $\voldomaxis$}~,\\ 
& \divergt \parT{\pmitt \Ev} = \vec{0}~,\qquad && \text{in
	$\voldomaxis$}~,\\ 
& \Ev \times \normal = \vec{0}~,\qquad && \text{in $\partial
	\voldomaxis$}~, 
\end{alignedat}
%\right.
\end{equation}
where  $\omega^2$ is the eigenvalue associated to the eigenfunction
$\Ev$ that we will consider different from $0$, $\normal$ indicates the outer unit normal, $\pmitt$ and
$\pmeab$ are two scalar constants representing, respectively, the
permittivity and the permeability in vacuum  already defined in~\eqref{eq:vacuum_epsmu}. 
Since we consider $\omega^2\neq0$, the divergence constraint in~\eqref{pb:eig_E} is satisfied and can be neglected~\cite{boffi:2010}. %In practice, Problem~\ref{pb:weak_curl_curl} is solved and the solutions associated to $\omega^2 = 0$ are discarded.
The solutions of~\eqref{pb:eig_E} associated to eigenvalues different
from zero can be divided in two classes: one, {TM} (Transverse
Magnetic), is associated to fields where $B_z = 0$ everywhere and
$E_z=0$ on the lateral surface of the cylinder (see
Figure~\ref{fig:pillbox}, left). The second one, {TE} (Transverse
Electric), is associated to fields where $E_z = 0$ everywhere and
$\partial_\normal B_z = 0$ on the lateral surface of the cylinder see
Figure~\ref{fig:pillbox}, right) \cite[Chapter 8]{jackson:2007}. 
\begin{figure}[!htb]
	\centering
	\includegraphics[width=0.41\textwidth, trim=90mm 50mm 30mm 70mm,
	clip]{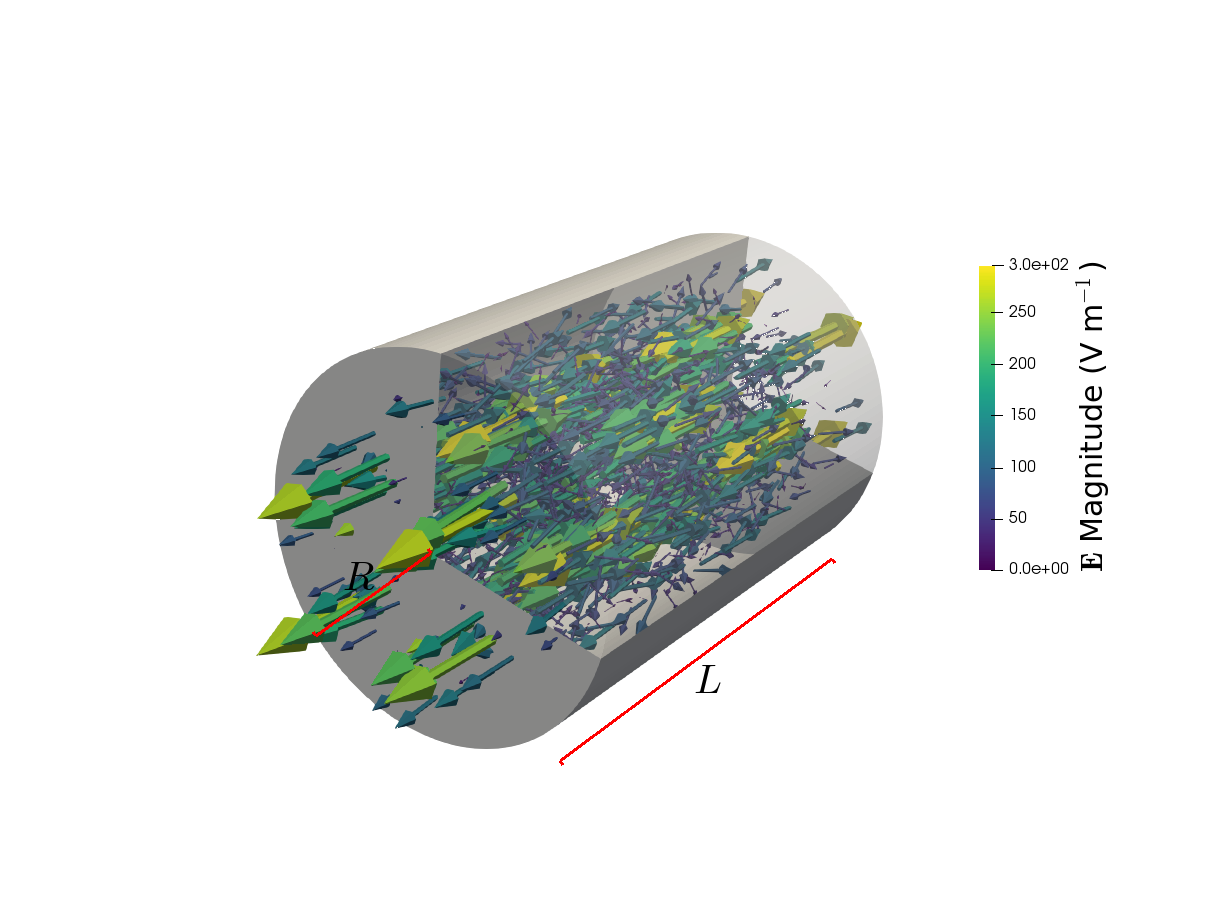}~~~ 
	\includegraphics[width=0.41\textwidth, trim=90mm 50mm 30mm 70mm,
	clip]{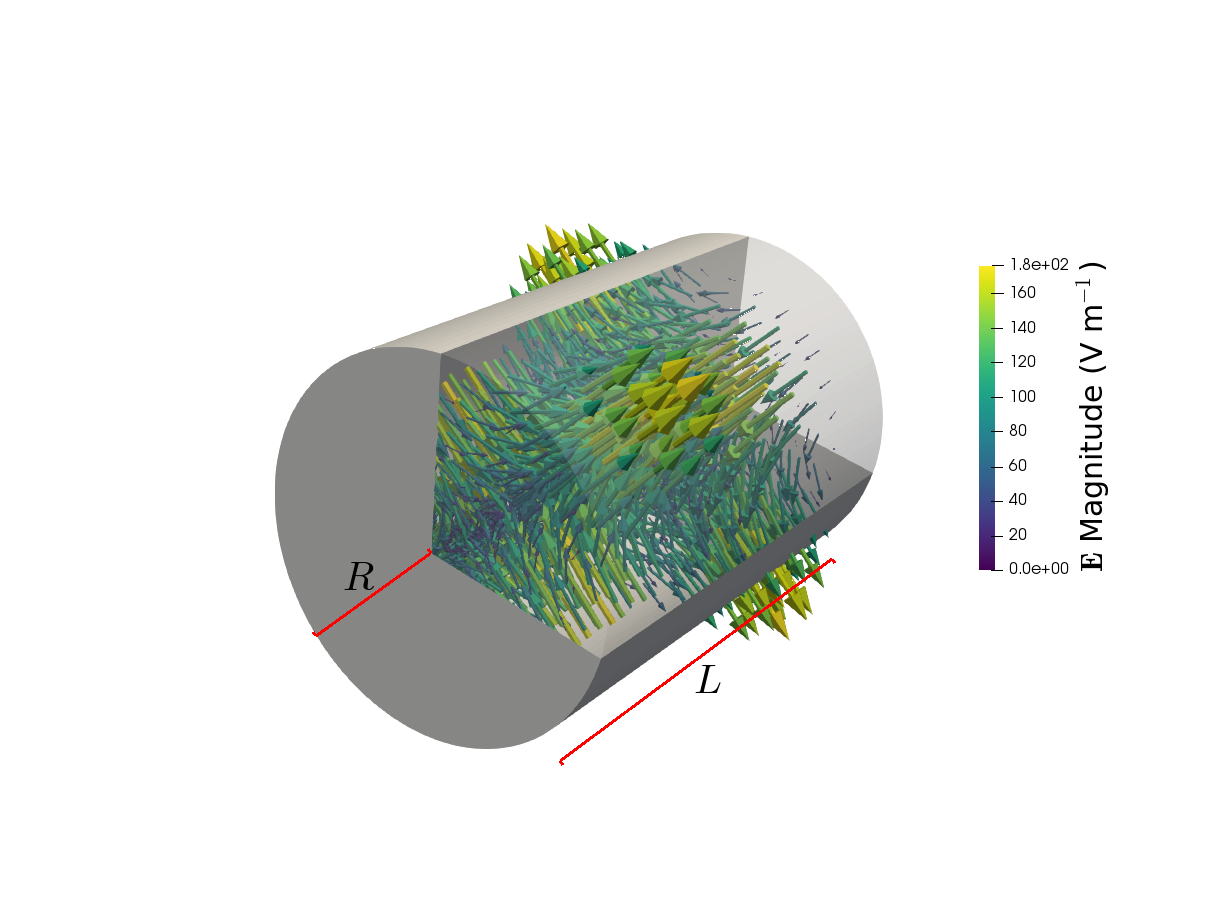} 
	\caption{Representation of a pillbox cavity of radius $R=\SI{35}{mm}$
		and length $L=\SI{100}{mm}$ with the electric field $\Ev$ associated
		to the eigenfunction $TM423$ (left) and $TE212$ (right).} 
	\label{fig:pillbox}
\end{figure}
The corresponding eigenvalues are given by
\begin{equation*}
\begin{aligned}
\omega^{TM}_{mnq} & = \dfrac{1}{\sqrt{\pmitt\pmeab}}
\sqrt{\dfrac{{\chi}_{mn}^2}{R^2} + \dfrac{q^2\pi^2}{L^2}}~, \qquad 
\omega^{TE}_{mnq} & = \dfrac{1}{\sqrt{\pmitt\pmeab}}
\sqrt{\dfrac{{\chi'}_{mn}^{2}}{R^2} + \dfrac{q^2\pi^2}{L^2}}~, 
\end{aligned}
\end{equation*}
where $\chi_{mn}$ and $\chi'_{mn}$ are the $n$-th root of the Bessel
function of the first kind of order $m$ and of its derivative,
respectively.  
The value $m$ is the Fourier mode of the corresponding
eigenfunction. As a consequence, for each $m$ we can compute the
eigenvalues varying the values of $q$ and $n$. In the following we
consider the case $\omega\neq0$. 

\medskip
Using cylindrical coordinates and applying the method presented in
this work, we end up with a set of independent two-dimensional
problems for each mode $m\neq0$ whose weak formulation is given by~\eqref{eq:axis_weak_eig_pb2}.
% The formulation is analogous to
%\eqref{pb:eig_E}, but the differential operators are in cylindrical
%coordinates, the computational domain is the section $\srfdomflat$ and
%the homogeneous boundary conditions are imposed on $\Gamma$. 
After the discretization step we have to find the
eigenpairs $(\lambda_{i, \, m}^2, \, \vva_{i, \, m})$
of linear system
\begin{equation*}
\mat{A}_m\vva_{i, \, m} = \lambda_{i, \, m}^2 \mat{M}_m \vva_{i, \, m}~,
\end{equation*}
where $\mat{A}$ is associated to the discretization on the curl-curl
operator, $\mat{M}$ is the mass matrix and $\lambda_{i, \, m}$ is an
approximation of $\omega_{mnq}$ for a specific value of $nq$.
%\pdfcomment{shall we explicitly write down the entries a(i,j) as 
%integrals? Maybe a matrix cartoon with the block structure?}

In Figure~\ref{fig:pillbox_10eigs}, the exact value of the first $10$
angular frequencies for $m=26$ are represented by horizontal blue
lines. In the same figure, the approximated angular frequencies are
shown for different values of subdivisions of the parametric section
($h$-refinement) and using a uniform discretization and a maximum
degree $p=3$.  
\begin{figure}[!htb]
	\centering
	\includegraphics[width=0.45\textwidth]{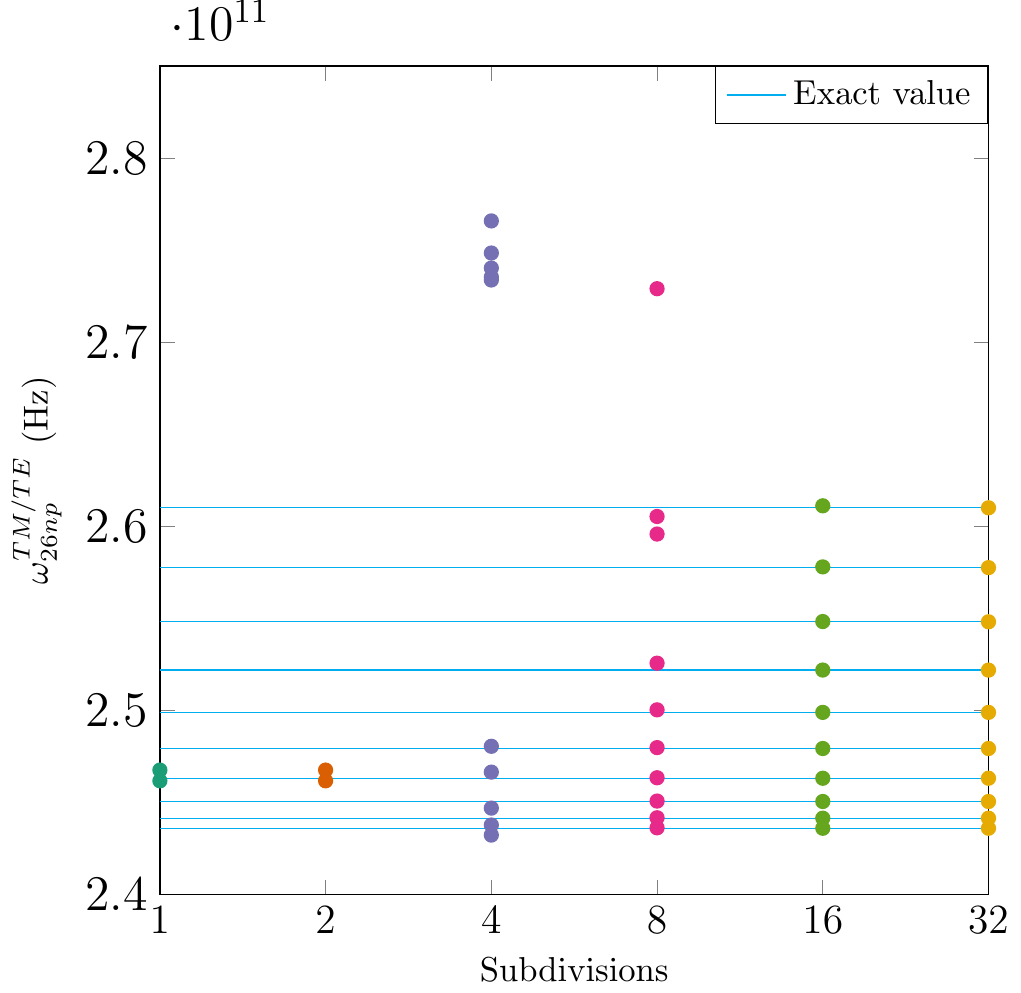}
	\caption{Approximations of the first $10$ angular frequencies
		($m=26$) for different number of subdivision of the
		parametric section. The exact values are represented by
		horizontal blue lines.} 
	\label{fig:pillbox_10eigs}
\end{figure}
It can be seen that no spurious modes appear and that the computed
eigenvalues converge to the exact ones. 
To test the approximation properties of our discretization, a specific
eigenvalue $\omega^{TE}_{134}$ has been chosen and the error for
different subdivisions and degrees $p$ has been computed. In
Figure~\ref{fig:pillbox_TE134}, the error trends with respect to the
to $h$-refinement (left) and to the number of \acp{DoF} (right) are
shown. 
\begin{figure}[!htb]
	\centering
	\includegraphics[width=0.41\textwidth]{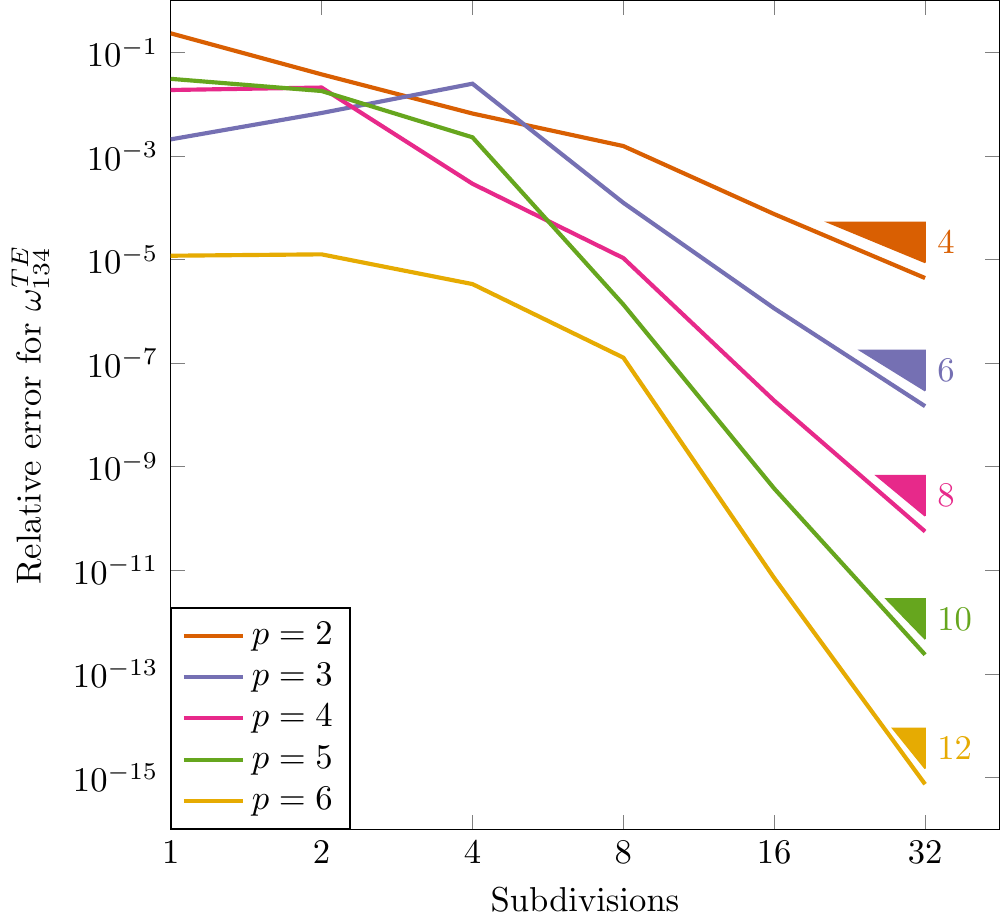} 
	\includegraphics[width=0.41\textwidth]{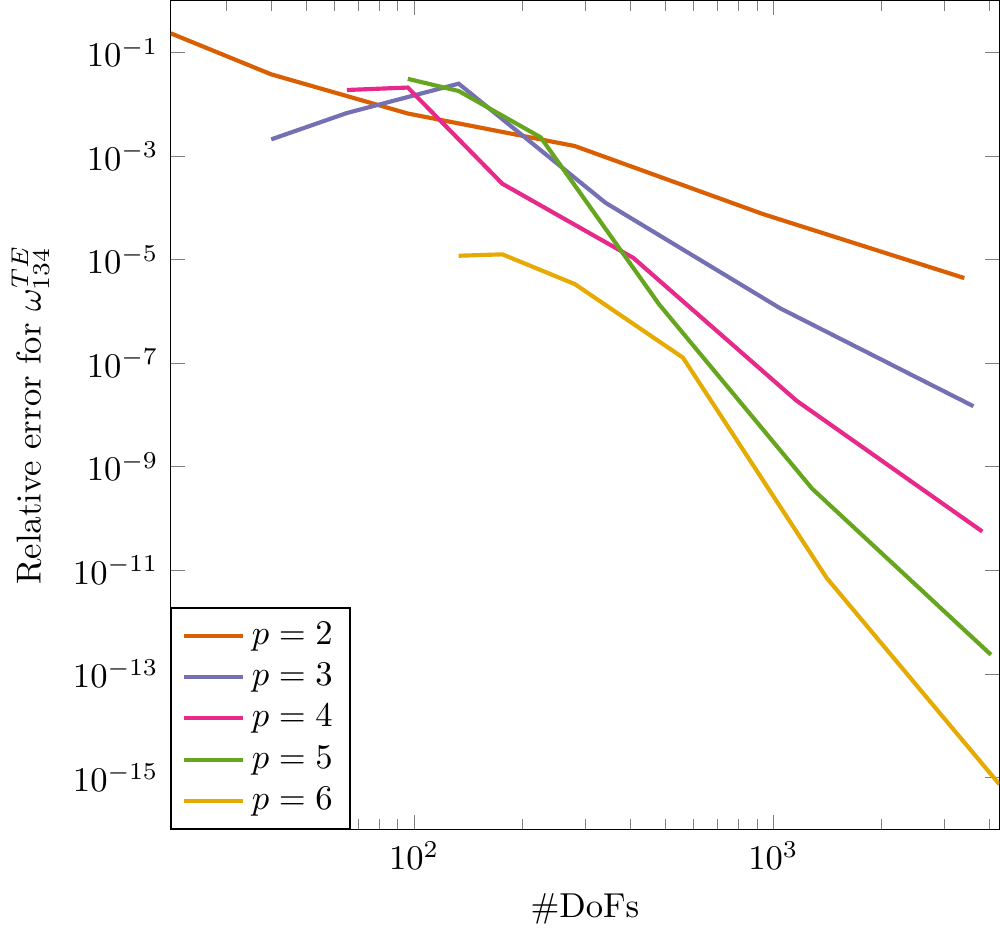}
	\caption{Relative error with respect to the number of
		subdivision of the parametric cross-section $\igasrfpar$
		(left) and with respect to the number of \acp{DoF} (right)
		for the angular frequency $\omega^{TE}_{134}$.} 
	\label{fig:pillbox_TE134}
\end{figure}
It can be seen that the approximate eigenvalues converges with a rate
equal to the double of the polynomial degree $p$ employed in the
discretization, which corresponds to the behaviour predicted by the theory \cite{arnold:2010}.

\subsection{TESLA cavity}
\label{sec:tesla_cavity}
In this section, we solve again the eigenvalue
problem~\eqref{pb:eig_E}, but on a domain which has a higher practical
relevance and whose geometry is defined using a \ac{NURBS}
surface. The specific design used in this test is the one-cell midcup
TESLA cavity whose precise definition can be found in~\cite[Table
III]{aune:2000}. 
We consider the problem of approximating the lowest resonant angular
frequency for the modes $m=1$ and $m=2$ which are, respectively, 
\begin{equation*}
% D1
%	\omega_1\approx \SI{10168.16}{MHz}\quad\text{and}\quad
%	\omega_2 = \SI{14438.55}{MHz}~.  % 1.016816396651845551e+10
%	and 1.443854645736561775e+10 
% D123	
\omega_1\approx
\SI{11468.32}{MHz}\quad\text{and}\quad \omega_2 =
\SI{14582.56}{MHz}~.  % 1.146832060773561287e+10 and
% 1.458256211921117973e+10+10 
\end{equation*}
In Figure~\ref{fig:tesla_eigfun}, the eigenfunctions associated to the
considered eigenvalues are depicted. 
\begin{figure}[!htb]
	\centering
	\includegraphics[width=0.41\textwidth, trim=10mm 30mm 30mm
	10mm, clip]{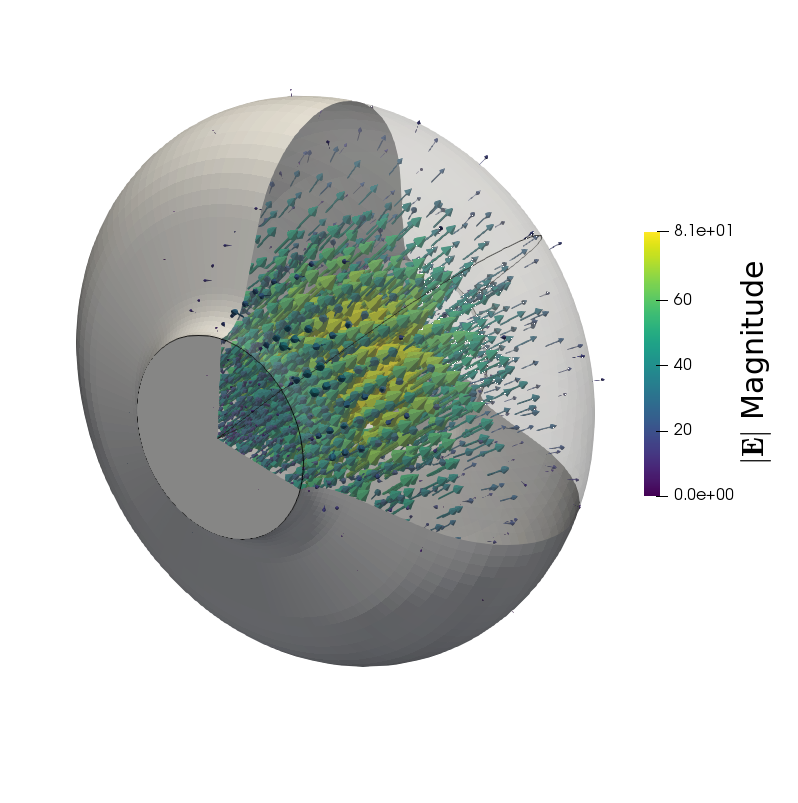}~~~ 
	\includegraphics[width=0.41\textwidth, trim=10mm 30mm 30mm
	10mm, clip]{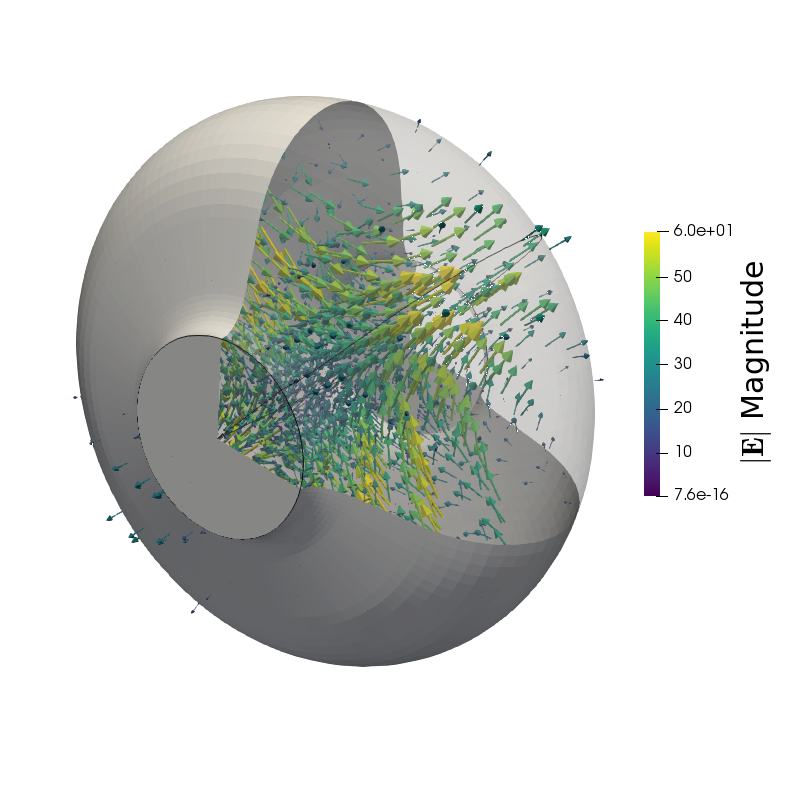} 
	\caption{TESLA cavity. Eigenfunctions associated to the lowest
		eigenvalues for the modes $m=1$ (left) and $m=2$ (right).} 
	\label{fig:tesla_eigfun}
\end{figure}%
In Figure~\ref{fig:tesla_cavity_err_vs_sub}, the trend of the relative
errors, estimated as the relative difference between two subsequent
refinement levels, are shown. 
\begin{figure}[!htb]
	\centering
	\includegraphics[width=0.41\textwidth]{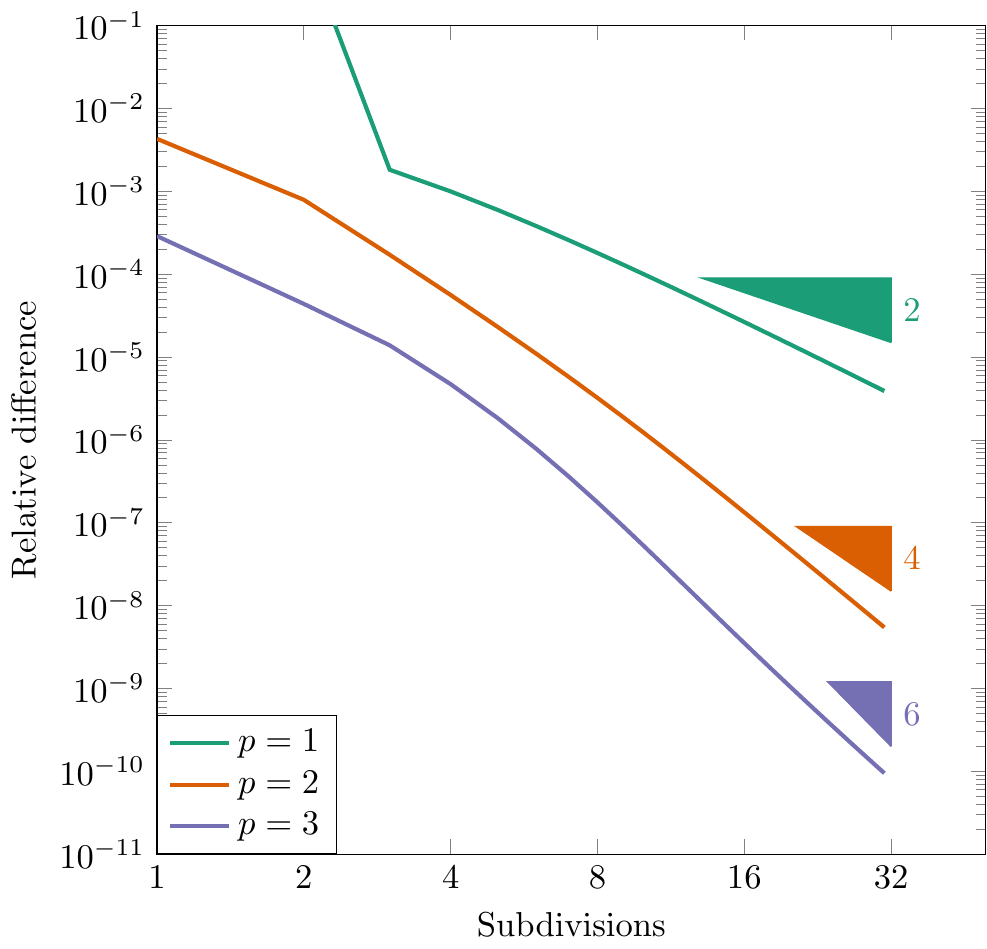} ~~~
	\includegraphics[width=0.41\textwidth]{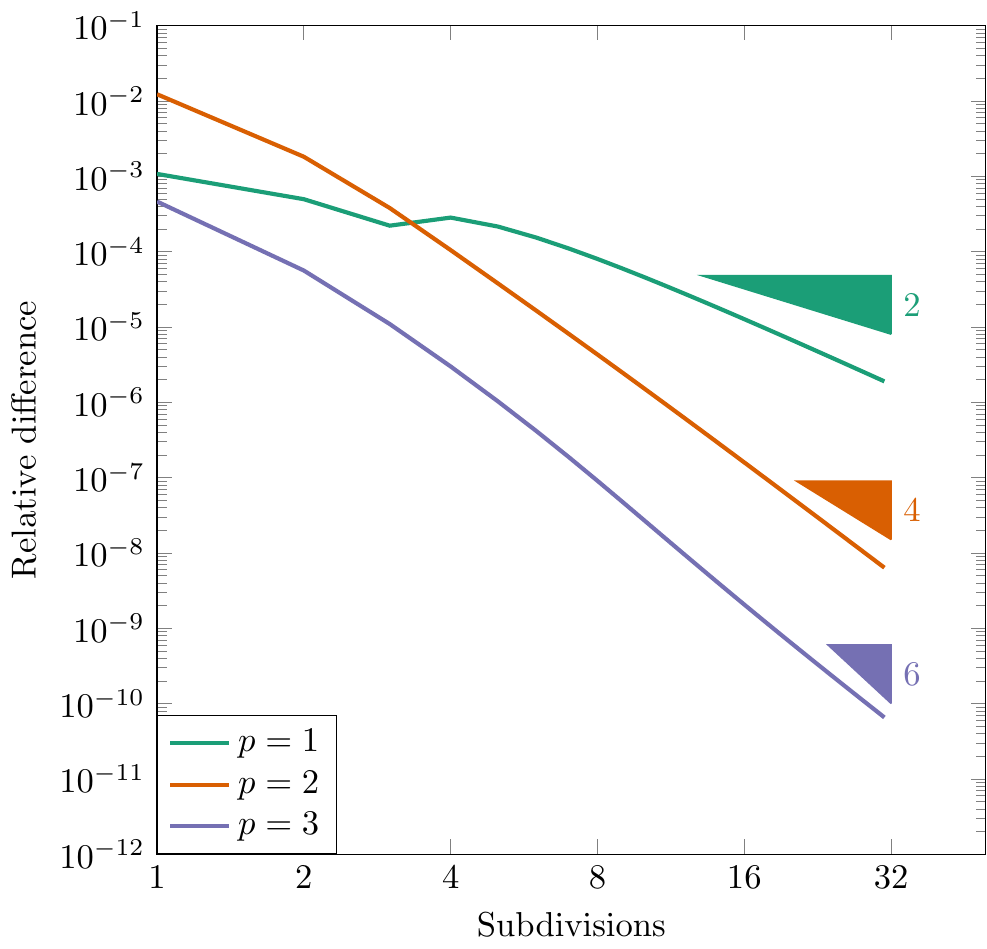}
	\caption{TESLA cavity. Error on the lowest eigenvalues $m=1$
		(left) and $m=2$ (right) with respect to the number of
		subdivisions for different degrees $p$ of the basis
		functions.} 
	\label{fig:tesla_cavity_err_vs_sub}
\end{figure}%
Also in this case, it can be seen that the approximate eigenvalues
converge with a rate close to twice the polynomial degree
$p$ employed in the discretization.

\section{Conclusions and future perspectives}
\label{sec:conclu}
\acresetall  % Reset all the acronyms
In the context of electromagnetic problems
on axisymmetric domains,
we have presented and analysed a  combination of  a spectral Fourier
approximation in the azimuthal direction with an 
\ac{IGA} approach in the radial and axial
directions. The resulting method provides
discrete approximations that, 
thanks to an appropriate choice of the finite-dimensional
approximation spaces,   preserves, at the discrete level, the structure 
of the continuous Maxwell equations. 
Rigorous error estimates have been obtained for the proposed method,
along with the proof that the associated discrete functional spaces
form a  de Rham  complex that is closed and exact.
A number of numerical benchmarks have been considered, yielding also empirical evidence that the resulting
method combines the capability of the \ac{IGA} approach to represent very accurately
complex geometries,  achieves   high convergence rates and 
allows to decouple the computation associated to different Fourier modes.

In future developments, it is planned to apply the proposed approach to the reconstruction of
magnetic fields to be  used for the simulation of particle accelerators, in order to complement
the results obtained in \cite{abele:2019} with high order particle tracking methods
with an equally accurate and physically sound representation of the  magnetostatic fields involved in these simulations.

\section*{Acknowledgements} 
This paper contains some of the results obtained by the first author
in  his PhD thesis 
work, which has been supervised jointly by the other authors. This
work has been carried out in the framework of 
a joint PhD agreement between Politecnico di Milano and Technische
Universit\"at Darmstadt. 
It has been partially supported by the ’Excellence Initiative’ of the German Federal and State Governments and the Graduate School of
Computational Engineering at Technische Universität Darmstadt.
We would like to thank Kersten Schmidt for the fruitful discussions
and Minah Oh for several explanations concerning the properties of
exact sequences of finite-dimensional spaces 
presented in her papers.

%% The Appendices part is started with the command \appendix;
%% appendix sections are then done as normal sections
%% \appendix

%% \section{}
%% \label{}

%% If you have bibdatabase file and want bibtex to generate the
%% bibitems, please use
%%
%%  \bibliographystyle{elsarticle-num} 
%%  \bibliography{<your bibdatabase>}
\bibliographystyle{elsarticle-num} 
\bibliography{axis_iga}

%% else use the following coding to input the bibitems directly in the
%% TeX file.

%\begin{thebibliography}{00}

%% \bibitem{label}
%% Text of bibliographic item

%\bibitem{}

%\end{thebibliography}
\end{document}